\documentclass[final,12pt]{colt2026} 

\title[Frequentist Regret Analysis of GP-TS via Fractional Posteriors]{Frequentist Regret Analysis of Gaussian Process Thompson Sampling via Fractional Posteriors}
\usepackage{times}
\usepackage{algorithm,algpseudocode,algorithm2e}

\makeatletter
\newcounter{eqcontrib}
\newcommand{\equalcontrib}{%
  \ifnum\value{eqcontrib}=0
    \thanks{These authors contributed equally.}\setcounter{eqcontrib}{\value{mpfootnote}}%
  \else
    \footnotemark[\value{eqcontrib}]%
  \fi
}
\makeatother

\coltauthor{%
 \Name{Somjit Roy}\equalcontrib
 \Email{sroy\_123@tamu.edu}\\
 \addr Department of Statistics\\
 Texas A\&M University\\
 College Station, TX 77843, USA
 \AND
 \Name{Prateek Jaiswal}\equalcontrib
 \Email{jaiswalp@purdue.edu}\\
 \addr Daniels School of Business\\
 Purdue University\\
 West Lafayette, IN 47907, USA
 \AND
 \Name{Anirban Bhattacharya} \Email{anirbanb@stat.tamu.edu}\\
 \addr Department of Statistics\\
 Texas A\&M University\\
 College Station, TX 77843, USA
 \AND
 \Name{Debdeep Pati}
 \Email{dpati2@wisc.edu}\\
 \addr Department of Statistics\\
 University of Wisconsin-Madison\\
 Madison, WI 53706, USA
 \AND
 \Name{Bani K. Mallick}
 \Email{bmallick@stat.tamu.edu}\\
 \addr Department of Statistics\\
 Texas A\&M University\\
 College Station, TX 77843, USA
}

\ifjmlrcleveref
  \newaliascnt{assumption}{theorem}
  \newtheorem{assumption}[assumption]{Assumption}
  \aliascntresetthe{assumption}
  \crefname{assumption}{assumption}{assumptions}
\else
  \newtheorem{assumption}[theorem]{Assumption}
\fi

\makeatletter

\renewcommand*{\@titlefoot}{} 

\def\ps@jmlrtps{%
  \let\@mkboth\@gobbletwo
  \def\@oddhead{\scriptsize
    \@jmlrproceedings
    \ifx\@jmlrworkshop\@empty\else\space\@jmlrworkshop\fi
    \hfill
  }%
  \let\@evenhead\@oddhead
  \def\@oddfoot{}%
  \let\@evenfoot\@oddfoot
}

\makeatother

\begin{document}

\maketitle

\begin{abstract}
We study Gaussian Process Thompson Sampling (GP-TS) for sequential decision-making over compact, continuous action spaces and provide a frequentist regret analysis based on fractional Gaussian process posteriors, without relying on domain discretization as in prior work. We show that the variance inflation commonly assumed in existing analyses of GP-TS can be interpreted as Thompson Sampling with respect to a fractional posterior with tempering parameter $\alpha \in (0,1)$. We derive a kernel-agnostic regret bound expressed in terms of the information gain parameter $\gamma_t$ and the posterior contraction rate $\epsilon_t$, and identify conditions on the Gaussian process prior under which $\epsilon_t$ can be controlled. As special cases of our general bound, we recover regret of order $\mathcal{\tilde O}(T^{\frac{1}{2}})$ for the squared exponential kernel, $\mathcal{\tilde O}(T^{\frac{2\nu+3d}{2(2\nu+d)}} )$ for the Mat\'ern-$\nu$ kernel, and a bound of order $\tilde{\mathcal O}(T^{\frac{2\nu+3d}{2(2\nu+d)}})$ for the rational quadratic kernel. Overall, our analysis provides a unified and discretization-free regret framework for GP-TS that applies broadly across kernel classes.
\end{abstract}

\begin{keywords}%
Gaussian Process, Thompson Sampling, Bayesian Optimization, Fractional Posterior Distribution, Frequentist Regret Bounds, First-order Concentration Properties.
\end{keywords}

\section{Introduction}\label{sec:introduction}

Many sequential decision-making problems involve selecting actions from a potentially infinite decision space in order to optimize an unknown objective under a limited evaluation budget. We consider a setting in which the decision space is explored sequentially and performance is measured by how efficiently the algorithm identifies the optimal decision using a finite number of observations. This formulation captures a broad range of applications, including adaptive materials and experimental decision point, hyperparameter tuning and model selection in machine learning, simulation-based optimization, and control problems with expensive or noisy evaluations~\citep{frazier2018bayesian,Lei2021}.

To formalize such sequential decision-making problems over uncountable decision spaces, we assume the existence of an unknown objective function $\pmb\theta_0(\cdot)$ that maps each decision alternative $x \in \mathcal{X} \subset \mathbb{R}^d$ to a real-valued outcome. The goal of the decision-maker is to identify a point $x_0 \in \mathcal{X}$ that maximizes $\pmb\theta_0(\cdot)$. Under the assumption that $\pmb\theta_0(\cdot)$ belongs to a sufficiently regular function class, we study the decision point and analysis of sequential algorithms that use past observations at previously queried points to form a belief over the unknown function and leverage this belief to guide future decisions. Algorithmic performance is evaluated using the standard notion of cumulative pseudo-regret (or simply regret), which measures the total loss incurred from failing to select the globally optimal alternative at each iteration.

Specifically, we study Gaussian Process Thompson Sampling (GP-TS)~\citep{Ray&Gopalan2017,kandasamy18a}, a popular class of algorithms to solve such sequential decision-making problems. Existing analyses of GP-TS commonly introduce posterior variance inflation as a technical device to facilitate regret analysis~\citep{Ray&Gopalan2017}. However, this inflation is typically imposed in an ad hoc manner and lacks a formal Bayesian characterization. We show that variance-inflated GP-TS can be equivalently interpreted as Thompson Sampling with respect to a fractional Gaussian process (GP) posterior, obtained by tempering the likelihood with a factor $\alpha \in (0,1)$. This fractional posterior formulation, originating in the Bayesian statistics literature~\citep{bhattacharya2019bayesian}, enables variance inflation to be incorporated directly at the level of the posterior distribution. 

In this work, we provide a general framework for deriving frequentist regret bounds for GP-TS with variance inflation induced via fractional GP posteriors. Our analysis yields a kernel-agnostic regret bound that applies to GP-TS with any covariance kernel, provided two quantities are characterized: the information gain parameter $\gamma_t$ and the posterior contraction rate $\epsilon_t$ of the corresponding fractional GP posterior. While general techniques for bounding $\gamma_t$ based on kernel eigenvalue decay are available in prior work (most notably \cite{vakili21a}), we identify conditions on the Gaussian process prior distribution under which the contraction rate $\epsilon_t$ can be controlled, thereby completing a general recipe for computing regret bounds for GP-TS.

Our main regret bound is expressed explicitly in terms of $\gamma_t$ and $\epsilon_t$, and does not rely on discretization of the action space. As concrete instantiations of this general result, we specialize the bound to squared exponential and Mat\'ern-$\nu$ kernels under standard smoothness assumptions that the unknown objective function lies in the associated reproducing kernel Hilbert space (RKHS). For the squared exponential kernel, our general bound yields a cumulative regret of order $\mathcal{O}(T^{\frac{1}{2}}\log^{d+1} T)$. For the Mat\'ern-$\nu$ kernel, we obtain a regret bound of order $\mathcal{O}(T^{\frac{2\nu+3d}{2(2\nu+d)}} \log^{\frac{\nu}{2\nu+d} + \frac{q}{2+d}} T)$, where $q$ denotes a parameter governing the kernel scale. The resulting bound for the Mat\'{e}rn-$\nu$ kernel exhibits a gap of $\mathcal{O}(T^{\frac{d/2}{2\nu+d}})$ relative to the minimax rate. Moreover, our analysis also yields a regret bound for the rational quadratic kernel by combining the contraction analysis developed here with existing techniques to compute bounds on information gain from \cite{vakili21a}. These results are obtained by substituting kernel-specific expressions for $\gamma_T$ and $\epsilon_T$ into our general regret bound. More generally, the same technique can be applied to other kernels, highlighting the extensibility of our approach when combined with~\cite{vakili21a}'s results on information gain bounds. 

To derive our general regret bound, we need to assume that $\alpha t \epsilon_t^2 < 1$. In particular, this condition is needed to lower bound the probability of querying an unsaturated point (see Definition~\ref{def:US}) in the domain. This condition recovers the order of variance inflation commonly assumed in existing analyses of GP-TS, but arises here naturally within a kernel-agnostic analytical framework.

A key technical contribution of this work is a proof technique that eliminates the need for domain discretization, thereby overcoming a principal limitation of the analysis in~\cite{Ray&Gopalan2017}. Their approach derives high-probability regret bounds by constructing time-varying discretizations of the decision space. This is achieved by first establishing pointwise convergence results for GP posteriors~\citep{hoffman2013stochastic} and then lifting them to uniform guarantees via a union bound argument (see~\cite{Ray&Gopalan2017}[Lemma 5]), which necessarily restricts the analysis to a finite subset of the domain. In contrast, our analysis directly bounds the expected regret without constructing high-probability confidence sets or discretizing the domain. This is enabled by establishing posterior convergence guarantees for Gaussian processes in expectation, using analytical tools inspired by the Bayesian statistics literature on fractional posteriors~\citep{bhattacharya2019bayesian}. Moreover, our framework identifies a general technical condition on the Gaussian process prior that yields posterior concentration rates for the GP posterior in expectation. These concentration results form the foundation of our regret analysis and apply broadly across various kernel classes, thereby enabling a unified and discretization-free regret analysis of GP-TS.

The remainder of the article is organized as follows. Section \ref{sec:notation} introduces important notations used throughout, followed by a review of related literature in Section~\ref{sec:litrev}. Section~\ref{sec:Prob} introduces the problem setup, while Section~\ref{sec:reganal} develops the regret analysis and establishes frequentist regret bounds, with explicit instantiations for commonly used kernels provided in Section~\ref{subsec:bound-common-kernels}.
Section~\ref{subsec:first-order-concentration} characterizes the first-order concentration properties of the underlying $\alpha$-posterior distribution. Finally, we wrap up with a discussion of key findings and potential directions for future research in Section \ref{sec:conclusion}.

\subsection{Notations}~\label{sec:notation}
We write $\mathbb{R}^{d}$ for the $d$-dimensional real space, $\mathbb{Z}^{d}$ for the $d$-dimensional integer lattice, $\mathbb{R}^{+}$ for the set of positive reals, and $\mathbb{N}$ for the set of natural numbers. The notation $[T]:=\{1, 2, \ldots, T\}$ refers to the finite index set of the first $T$ rounds. For a finite set $A$, let $|A|$ denote its cardinality, and $\emptyset$ denotes the empty set. We use following non-asymptotic symbols: $f(T) = \mathcal{O}(g(T))$ indicates an upper bound up to a universal constant, $f(T) = \Omega(g(T))$ indicates a corresponding lower bound, and $f(T) = \tilde{\mathcal{O}}(g(T))$ denotes an upper bound up to polylogarithmic factors, i.e., $f(T) = \mathcal{O}(g(T)\mathrm{polylog}(T))$. The relation $a\asymp b$ means $a\lesssim b$ and $b\lesssim a$, where $\lesssim$ indicates inequality up to a universal constant. The space $L^{2}(\mathcal X)$ denotes the class of square-integrable functions on $\mathcal X$ viz., $L^{2}(\mathcal X) := \{f:\mathcal X\to \mathbb{R} \mid \int_{\mathcal X}|f(x)|^2dx < \infty\}$. The space of real-valued continuous functions on $[0, 1]^{d}$ is denoted by $\mathcal{C}[0, 1]^{d}$. We write $I_{d}$ for the identity matrix of order $d$ and $\mathcal{N}_{d}(\mu, \Sigma)$ for the $d$-variate Gaussian distribution with mean $\mu$ and covariance $\Sigma$. Inner products and norms are denoted as follows: $\langle \cdot, \cdot\rangle$ is the standard Euclidean inner product on $\mathbb{R}^{d}$, with $\lVert \cdot \rVert_{d}$ the corresponding Euclidean norm, and for a function $f$, the sup-norm is $\lVert f\rVert_{\infty} := \sup_{x\in \mathcal X}|f(x)|$. For a kernel $k(\cdot, \cdot)$, $\langle f, g\rangle_k$ is the inner product between functions $f$ and $g$ in the associated RKHS, and $\lVert f\rVert_{k}:= (\langle f, f\rangle_{k})^{\frac{1}{2}}$ is the RKHS norm. The Gamma function is denoted by $\Gamma(\cdot)$ and $K_{\nu}(\cdot)$ denotes the modified Bessel function of the second kind. We use $\Pi$ for a probability measure: $\Pi(\cdot)$ represents the prior distribution, $\Pi(\cdot\mid \mathcal H_t)$ the posterior after round $t$ given the observation history $\mathcal H_t$, and $\Pi_{ \alpha}(\cdot\mid \mathcal H_t)$ the $\alpha$-fractional posterior. For an event $A$, $\text{I}(A)$ denotes the indicator function taking value $1$ if $A$ occurs and $0$ otherwise. Finally, $\mathbb{P}_0^{(d)}$ denotes the $d$-fold product measure induced by the true data-generating distribution $p_0(y\mid x)$. 

\section{Related Work}~\label{sec:litrev}

Most theoretical work in Bayesian Optimization (BO) has focused on algorithms inspired by the principle of optimism in the face of uncertainty, most notably Upper Confidence Bound (UCB)–type methods. A canonical example is GP-UCB, introduced and analyzed in \citet{srinivas2010gaussian}, which establishes sublinear regret guarantees for GP optimization under suitable regularity conditions. Since then, a large body of work has developed UCB-style algorithms and refinements for GP-based optimization, and a comprehensive overview of these approaches can be found in \citet{Ray&Gopalan2017}. Beyond optimism-based methods, \citet{Ray&Gopalan2017} also introduced GP-TS, which assumes a time-varying Gaussian process prior over the unknown objective function and establishes a frequentist regret bound of order $\tilde{\mathcal O}(\gamma_T \sqrt{dT})$, where $\gamma_T$ denotes the maximum information gain from $T$ observations. However, their regret analysis relies on a time-dependent discretization of the action space $\mathcal X$ that becomes increasingly dense as $t \to \infty$. 

A complementary line of work studies regret bounds through the lens of kernel complexity and information-theoretic quantities. In particular, \citet{vakili21a} provide a general framework for bounding the information gain $\gamma_T$ based on the eigenvalue decay of the kernel integral operator. Using this approach, they show that for Mat\'ern-$\nu$ and squared exponential kernels, both GP-UCB and GP-TS achieve regret bounds of order
$
\mathcal O(T^{\frac{\nu+d}{2\nu+d}}\log^{\frac{4\nu+d}{4\nu+2d}}T)$ and $
\mathcal O(T^{\frac{1}{2}}\log^{\frac{1+d}{2}}T)
$
respectively. These rates are minimax-optimal up to logarithmic factors and match the lower bounds, $\Omega(T^{\frac{\nu+d}{2\nu+d}})$ and $\Omega(T^{\frac{1}{2}}\log^{\frac{d}{4}}T)$, derived in \citet{scarlett17a} for Mat\'ern-$\nu$ and squared exponential kernels.

More recent work has begun to examine the robustness of GP-based bandit algorithms to kernel misspecification and modeling errors. For instance, \citet{bogunovic2021misspecified} and \citet{kirschner2020distributionally} study GP bandits under various forms of model mismatch and distributional uncertainty, highlighting potential failure modes of overly confident posterior updates. Related efforts by \citet{calandriello2019gaussian} investigate adaptive strategies for kernel learning in GP bandits, aiming to mitigate misspecification by learning kernel parameters online. These works collectively suggest that algorithms achieving optimal regret under well-specified priors may still perform poorly when the assumed Gaussian process prior is even mildly incorrect. Finally, \citet{Kinjal2020} analyze Thompson Sampling for GP optimization from a Bayesian perspective, focusing on the convergence of the sequence of selected actions to the global optimum rather than cumulative regret minimization. While their results provide valuable insight into asymptotic behavior, they are not directly comparable to frequentist regret guarantees that dominate the BO literature.

Parallel to our work, \citet{li2026robustbayesianoptimizationtempered} study the use of fractional posteriors in a BO setting. While both works employ posterior tempering ideas, the problem setting and technical analyses differ, and the results are complementary.

\section{Problem Setup}~\label{sec:Prob}
We consider the problem of sequentially optimizing an unknown function over a compact decision space $\mathcal{X} = [0,1]^d$. An agent interacts with the environment over a finite horizon of $T$ rounds. At each round $t \in [T]$, the agent selects an action $x_t \in \mathcal{X}$ and observes a noisy evaluation:
\begin{align}\label{eq:regression-setting}
y_t(x) = \pmb\theta_0(x) + \eta_t,
\end{align}
where $\pmb\theta_0 : \mathcal{X} \to \mathbb{R}$ is an unknown objective function and $\{\eta_t\}_{t=1}^T$ are independent Gaussian noise variables with zero mean and known variance. Let $\mathcal{H}_t := \{(x_s, y_s)\}_{s=1}^t$ denote the observation history up to time $t$, and let $\mathcal{F}_t$ be the filtration generated by $\mathcal{H}_t$. We assume that $\pmb\theta_0(\cdot) \in \Theta$, where $\Theta \subseteq L^2(\mathcal{X})$. The space $L^2(\mathcal{X})$ is equipped with the standard inner product
\(
\langle f, g \rangle_{L^2(\mathcal{X})} := \int_{\mathcal{X}} f(x)\, g(x)\, dx.\)

We adopt a Bayesian framework to model uncertainty over the unknown objective function $\pmb\theta_0(\cdot) \in \Theta$ by placing a Gaussian process prior over the function space. A Gaussian process is a collection of random variables $\{\pmb\theta(x)\}_{x \in \mathcal{X}}$ such that, for any finite set $\{x_i\}_{i=1}^m \subset \mathcal{X}$, the random vector $(\pmb\theta(x_1), \ldots, \pmb\theta(x_m))$ follows a multivariate Gaussian distribution with mean function $\mu(\cdot)$ and covariance kernel $k(\cdot,\cdot)$. We consider a zero-mean Gaussian process prior, $\Pi \equiv \mathrm{GP}_{\mathcal{X}}(0, k^\alpha)$, where the scaled kernel is defined as $k^\alpha(\cdot,\cdot) := \alpha^{-1} k(\cdot,\cdot)$, for some $\alpha \in (0,1)$. Observations are assumed to follow the Gaussian noise model, $\eta_t \sim \mathcal{N}_1(0,\lambda)$, with known variance $\lambda > 0$. Given the observation history $\mathcal{D}_t = \{(x_s, y_s)\}_{s=1}^t$, let $A_t := \{x_1,\ldots,x_t\}$ denote the queried points and $y_{1:t} := (y_1,\ldots,y_t)^\top$ the corresponding observations.
Let $\mathcal{F}_t$ denote the sigma algebra generated by $\mathcal{D}_t$.
The resulting $\alpha$-fractional posterior distribution over $\pmb\theta$ is again a Gaussian process:
\[
\Pi_\alpha(\cdot \mid \mathcal{D}_t)
= \mathrm{GP}_{\mathcal{X}}\big(\mu_t(\cdot), k_t^\alpha(\cdot,\cdot)\big),
\quad \text{where } k_t^\alpha(\cdot,\cdot) := \alpha^{-1} k_t(\cdot,\cdot),
\]
with posterior mean and covariance given by:
\begin{align}
\label{eq:fractional-posterior-updates}
\mu_t(x)
&= k(x, A_t)^\top \big(k(A_t, A_t) + \lambda I_t\big)^{-1} y_{1:t}, \\
k_t(x,x')
&= k(x,x') - k(x, A_t)^\top \big(k(A_t, A_t) + \lambda I_t\big)^{-1} k(A_t, x'), \\
\sigma_t^2(x)
&= k_t^\alpha(x,x).
\end{align}
Here, for vectors $E = (e_1,\ldots,e_m)^\top$ and $E' = (e'_1,\ldots,e'_r)^\top$, the matrix $k(E,E') \in \mathbb{R}^{m \times r}$ is defined entrywise by $[k(E,E')]_{i,j} = k(e_i,e'_j)$, and for any function $f \in L^2(\mathcal{X})$ we write $f(E) := (f(e_1),\ldots,f(e_m))^\top$. We assume that the kernel $k$ is continuous, positive definite, and satisfies the integrability condition\(
\int_{\mathcal{X}} \int_{\mathcal{X}} k^2(x,x') \, dx \, dx' < \infty.\)

Given the fractional GP posterior at time $t-1$, we employ a Thompson Sampling strategy to select evaluation points sequentially. At each round $t$, the algorithm samples a function
\(
\pmb\theta_t \sim \Pi_{\alpha}(\cdot \mid \mathcal{D}_{t-1}),
\)
and selects the next query point as
\(
x_t \in \arg\max_{x \in \mathcal{X}} \pmb\theta_t(x).
\)
The selected point $x_t$ is then evaluated to obtain the noisy observation $y_t$, and the posterior is updated accordingly.
This procedure coincides with the GP-TS algorithm in~\cite{Ray&Gopalan2017}. The use of the fractional posterior induces a principled variance inflation while preserving the Thompson Sampling structure. For completeness, the resulting procedure is summarized in Algorithm~\ref{alg:GTS}.
\begin{algorithm}
\caption{Gaussian Process Thompson Sampling (GP-TS) with variance inflation}
\label{alg:GTS}
\begin{algorithmic}
\State \textbf{Input:} Gaussian process prior kernel $k(\cdot,\cdot)$, noise variance $\lambda$, and tempering parameter $\alpha$.
\State Initialize history $\mathcal{D}_0 \gets \emptyset$
\State \textbf{For} $t \in [T]$ \textbf{do}
\State \hspace{1em} Sample $\pmb\theta_t \sim \mathrm{GP}_{\mathcal X}\big(\mu_{t-1}(\cdot), \alpha^{-1}k_{t-1}(\cdot,\cdot)\big)$
\State \hspace{1em} Select $x_t \gets \arg\max_{x \in \mathcal X} \pmb \theta_t(x)$
\State \hspace{1em} Observe $y_t \gets \pmb\theta_0(x_t) + \eta_t$
\State \hspace{1em} Update $\mathcal{D}_t \gets \mathcal{D}_{t-1} \cup \{(x_t,y_t)\}$
\State \hspace{1em} Update posterior $\mu_t(\cdot),\, k_t(\cdot,\cdot)$
\State \textbf{end for}
\end{algorithmic}
\end{algorithm}

From Algorithm~\ref{alg:GTS}, the joint data-generating distribution induced by GP-TS up to time $T$ is given by:
\[
\mathbb{P}_0^{(T)}(D_T)
:=
\prod_{t=1}^{T}
\bigl[
\mathcal{N}_1(y_t \mid \pmb\theta_0(x_t),\lambda)\,
q_{\alpha}(x_t \mid \mathcal{D}_{t-1})
\bigr],
\]
where $q_{\alpha}(\cdot \mid \mathcal{D}_{t-1})$ denotes the distribution over query points induced by posterior sampling from the $\alpha$-fractional GP posterior at time $t$. 

For any function $\pmb\theta \in \Theta$ and any sequence of query points
$A_t$, under the Gaussian noise model, the conditional distribution of $y_{1:T}$
given $A_t$ is
\(
y_{1:T} \mid A_T, \pmb\theta
\;\sim\;
\mathcal{N}_T\!\left(
\big(\pmb\theta(x_1),\ldots,\pmb\theta(x_T)\big),
\lambda I_T
\right),
\)
or equivalently,
\(
p_{\pmb\theta}^{(T)}(y_{1:T} \mid A_T)
=
\prod_{t=1}^{T} \mathcal{N}_1\!\left(y_t \mid \pmb\theta(x_t), \lambda\right).
\)
We denote the associated probability measure by $\mathbb{P}_{\pmb\theta,x}^{(T)}$. To quantify separation between reward-generating distributions, for any
$\beta > 0$ and $\pmb\theta,\pmb\vartheta \in \Theta$, we define the
order-$\beta$ R\'enyi divergence at time $t$ as,
\(
D_{\beta}^{(t)}(\pmb\theta,\pmb\vartheta)
:=
\frac{1}{\beta-1}
\log
\int
p_{\pmb\theta}^{(t)}(y_{1:t} \mid A_t)^{\beta}
p_{\pmb\vartheta}^{(t)}(y_{1:t} \mid A_t)^{1-\beta}
\, dy_{1:t}
\). 
\paragraph{Regret.} Our objective is to analyze the frequentist performance of GP-TS under fractional posterior inference. Performance is quantified using the standard notion of cumulative (pseudo-)regret, which measures the total loss incurred by the algorithm from failing to select the globally optimal decision over a finite horizon of $T$ rounds. Formally, the cumulative regret of GP-TS with fractional posterior parameter $\alpha$ is defined as:
\begin{align}
\label{eq:regret-def}
\mathrm{Regret}(T)
:= \sum_{t=1}^{T} \bigl[ \pmb\theta_0(x_0) - \pmb\theta_0(x_t) \bigr],
\end{align}
where $x_0 \in \arg\max_{x \in \mathcal{X}} \pmb\theta_0(x)$ denotes a global maximizer of the unknown objective function. The randomness in \eqref{eq:regret-def} arises from both the observation noise and the internal randomization of the GP-TS policy. Our main results establish upper bounds on the expected regret, $\mathbb{E}[\mathrm{Regret}(T)]$, under appropriate assumptions on the Gaussian process prior and the tempering parameter $\alpha$. Throughout, expectations of the cumulative regret are taken with respect to $\mathbb{P}_0^{(T)}$.

\section{Frequentist Regret Bound}\label{sec:reganal}

As discussed in Section~\ref{sec:introduction}, our regret analysis is driven by first-order concentration properties of the $\alpha$-fractional posterior distribution. We begin by imposing a joint regularity condition on the prior and the data-generating process, which ensures near-optimal contraction of the $\alpha$-posterior and forms the basis of our regret bounds.

\begin{assumption}[Prior thickness]\label{ass:prior}
Fix $\alpha \in (0,1)$ and let $\{\epsilon_t\}_{t\geq 0}$ be a positive sequence. There exists $t_0 \geq 1$ such that for all $t \geq t_0$, 
\(
\Pi\!\left( B(\pmb\theta_0, \epsilon_t) \right)
\;\geq\;
\exp\!\left(-\frac{D \alpha t \epsilon_t^2}{4}\right),
\)
where
\(
B(\pmb\theta_0, \epsilon_t)
:=
\left\{
\pmb\theta \in \Theta :
D_2^{(t)}(\pmb\theta_0, \pmb\theta)
\leq \frac{D \alpha t \epsilon_t^2}{4}
\right\}
\)
denotes a neighborhood of $\pmb\theta_0(\cdot)$ defined with respect to the query points $\{x_s\}_{s=1}^t$, and $D>0$ is a fixed constant.
\end{assumption}

Assumption~\ref{ass:prior} is standard in Bayesian nonparametrics and ensures that the prior places sufficient mass in shrinking neighborhoods of the true function to achieve the posterior contraction rate $\epsilon_t$. When this contraction rate is coupled with an upper bound on the cumulative posterior variance,
\(\mathbb{E}\!\left[\sum_{t=1}^{T} k_{t-1}^2(x_t,x_t)\right], \)
the two ingredients together imply frequentist regret guarantees via Theorem~\ref{thm:GRB}. 

We now state our main regret bound. The result below provides a general upper bound on the expected cumulative regret of GP-TS under fractional posterior inference with $\alpha t\epsilon_t^2 <1$.

\begin{theorem}[General regret bound]\label{thm:GRB}
Under Assumption~\ref{ass:prior}, for any $\eta_1, \eta_2 \in (0,1)$ and $\alpha t\epsilon_t^2 <1$, the expected cumulative regret satisfies:
\begin{align}
\label{eq:regret-bound-final-alpha-theorem}
\mathbb{E}[\mathrm{Regret}(T)]
&\leq
\Bigg\{
\big[\overline{p}_0(1-\eta_1^{-2}-\eta_2^{-1})\big]^{-\frac{1}{2}}
\Bigg[
\left(\frac{D}{1-\alpha}\right)^{\frac{1}{2}}
+
\left(\frac{(D+2)\,C(1-\alpha,\pmb\theta_0,k)}{1-\alpha}\right)^{\frac{1}{2}}
\Bigg] \nonumber\\
&\quad\quad
+
\left(\frac{(D+2)\,C(1-\alpha,\pmb\theta_0,k)}{1-\alpha}\right)^{\frac{1}{2}}
\Bigg\}
\left(
\mathbb{E}\!\left[\sum_{t=1}^T k_{t-1}^2(x_t,x_t)\right]
\right)^{\frac{1}{2}}
\left(
\sum_{t=1}^T t \epsilon_t^2
\right)^{\frac{1}{2}},
\end{align}
where $\overline{p}_0$ is a positive constant and $C(\cdot)$ is a known function depending on the prior, kernel $k(\cdot, \cdot)$, and $\pmb\theta_0(\cdot)$.
\end{theorem}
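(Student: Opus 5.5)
The plan follows the standard "saturated/unsaturated points" template for Thompson Sampling regret (cf.\ Definition~\ref{def:US} and \cite{Ray&Gopalan2017}). The difference is that the discretized confidence sets are replaced by posterior contraction in expectation, derived from Assumption~\ref{ass:prior} via fractional-posterior arguments in the spirit of \cite{bhattacharya2019bayesian}.

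\textbf{Step 1: contraction in expectation.} The first step is a first-order concentration bound for the $\alpha$-posterior. Under Assumption~\ref{ass:prior}, I would use the variational characterization of $\Pi_\alpha(\cdot\mid\mathcal D_t)$ (Donsker--Varadhan applied to the tempered likelihood) to show
\[
\mathbb{E}\Big[\int D^{(t)}_{\alpha}(\pmb\theta,\pmb\theta_0)\,\Pi_\alpha(d\pmb\theta\mid\mathcal D_t)\Big]\le \frac{D+2}{1-\alpha}\,\alpha t\epsilon_t^2 .
\]
The prior mass on $B(\pmb\theta_0,\epsilon_t)$ controls the KL term, and the order-2 R\'enyi radius controls the likelihood-ratio fluctuation. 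For Gaussian noise, $D^{(t)}_\alpha$ reduces to $\frac{\alpha}{2\lambda}\sum_{s\le t}(\pmb\theta(x_s)-\pmb\theta_0(x_s))^2$. Combining this with the explicit GP form yields two pointwise-in-expectation bounds:
\begin{itemize}
\item on the squared mean error, $\mathbb{E}[(\mu_{t-1}(x)-\pmb\theta_0(x))^2]\lesssim \frac{(D+2)C(1-\alpha,\pmb\theta_0,k)}{1-\alpha}\,k_{t-1}(x,x)\,t\epsilon_t^2$, where the constant $C$ absorbs the RKHS-type dependence on $\pmb\theta_0$ and $k$;
\item on the posterior spread, $\frac{D}{1-\alpha}\,k_{t-1}(x,x)\,t\epsilon_t^2$.
\end{itemize}
This step is where $\alpha t\epsilon_t^2<1$ enters: it keeps the inflated standard deviation $\alpha^{-1/2}k_{t-1}^{1/2}$ at least as large as the contraction radius.

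\textbf{Step 2: regret decomposition.} I would write $\pmb\theta_0(x_0)-\pmb\theta_0(x_t)=[\pmb\theta_0(x_0)-\pmb\theta_t(x_t)]+[\pmb\theta_t(x_t)-\pmb\theta_0(x_t)]$. The second bracket splits further as $(\pmb\theta_t(x_t)-\mu_{t-1}(x_t))+(\mu_{t-1}(x_t)-\pmb\theta_0(x_t))$. Each piece is bounded in conditional expectation by Cauchy--Schwarz and the Step~1 bounds, giving terms of the form $(\text{const})\,\mathbb{E}[k_{t-1}(x_t,x_t)^{1/2}(t\epsilon_t^2)^{1/2}]$; the mean-error piece produces the lone $\big((D+2)C/(1-\alpha)\big)^{1/2}$ term in the theorem. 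For the first bracket, I would apply the unsaturated-point argument. With probability at least $\overline p_0(1-\eta_1^{-2}-\eta_2^{-1})$ conditional on $\mathcal F_{t-1}$, the sample $\pmb\theta_t$ exceeds $\pmb\theta_0(x_0)$ at the optimum, so the played point is unsaturated. This uses an anti-concentration bound for the Gaussian $\pmb\theta_t(x_0)$, which has inflated variance, plus Chebyshev and Markov steps that account for $\eta_1^{-2}$ and $\eta_2^{-1}$. Converting this probability into an expectation bound, as in Agrawal--Goyal style arguments, yields a factor $[\overline p_0(1-\eta_1^{-2}-\eta_2^{-1})]^{-1/2}$ multiplying $\big(\sqrt{D/(1-\alpha)}+\sqrt{(D+2)C/(1-\alpha)}\big)\,\mathbb{E}[k_{t-1}^{1/2}(x_t,x_t)(t\epsilon_t^2)^{1/2}]$.

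\textbf{Step 3: summation.} Summing over $t$ and applying Cauchy--Schwarz over $t$ and in expectation gives $\big(\mathbb{E}\sum_t k_{t-1}^2(x_t,x_t)\big)^{1/2}\big(\sum_t t\epsilon_t^2\big)^{1/2}$ as in \eqref{eq:regret-bound-final-alpha-theorem}. If the per-round bound naturally comes out in $k_{t-1}$ rather than $k_{t-1}^2$, I would arrange the Step~1 pointwise bound to carry $k_{t-1}^2$, using Cauchy--Schwarz on the posterior covariance representation. The main obstacle is Step~2's unsaturated-point probability without discretization: the lower bound must hold for the continuum maximizer $x_0$ using only in-expectation contraction, so the Markov/Chebyshev events must be intersected carefully. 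I would first attempt to control $\mathbb{P}(\pmb\theta_t(x_0)\ge \pmb\theta_0(x_0)\mid\mathcal F_{t-1})$ directly from the Gaussian tail, with the mean error handled by Markov's inequality.
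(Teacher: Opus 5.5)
Your proposal has genuine gaps, and the most serious is in Step~1. The divergence $D^{(t)}_\alpha(\pmb\theta,\pmb\theta_0)=\frac{\alpha}{2\lambda}\sum_{s\le t}(\pmb\theta(x_s)-\pmb\theta_0(x_s))^2$ only sees the design points. A Donsker--Varadhan bound on its posterior expectation therefore says nothing about $\mu_{t-1}$ at $x_t$, at $x_0$, or at any comparison point. Your remark that the constant $C$ absorbs the RKHS-type dependence does not supply this link.

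The paper's device is to put the RKHS norm into the loss:
\begin{itemize}
\item Lemma~\ref{lem:post} controls $\frac{1}{2}\|\pmb\theta-\pmb\theta_0\|_{k^\alpha}^2+D^{(t)}_\alpha(\pmb\theta,\pmb\theta_0)$. This is what forces the extra prior condition $\int\Pi(d\pmb\theta)e^{(1-\alpha)\|\pmb\theta\|_k^2}<\infty$, and it is where $C(1-\alpha,\pmb\theta_0,k)$ comes from.
\item Via Lemma~\ref{lem:identity} and Jensen, this gives $\sum_t\mathbb{E}\|\mu_{t-1}-\pmb\theta_0\|^2_{k^\alpha_{t-1}}\le\frac{(D+2)\alpha C(1-\alpha,\pmb\theta_0,k)}{1-\alpha}\sum_t t\epsilon_t^2$.
\item The reproducing property then gives $|\mu_{t-1}(x)-\pmb\theta_0(x)|\le\sigma_{t-1}(x)\|\mu_{t-1}-\pmb\theta_0\|_{k^\alpha_{t-1}}$ simultaneously for all $x$. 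That uniformity is what lets you evaluate at data-dependent points such as $x_t$ and then apply Cauchy--Schwarz in expectation. Your fixed-$x$ bounds do not allow this.
\end{itemize}

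Your second bullet is false as stated. Given $\mathcal{F}_{t-1}$, the second moment of $\pmb\theta_t(x)$ about $\mu_{t-1}(x)$ is exactly $\alpha^{-1}k_{t-1}(x,x)$, and about $\pmb\theta_0(x)$ it is larger. Under $\alpha t\epsilon_t^2<1$ this exceeds $t\epsilon_t^2k_{t-1}(x,x)$. It is at most $\frac{D}{1-\alpha}t\epsilon_t^2k_{t-1}(x,x)$ only if $\alpha t\epsilon_t^2\ge(1-\alpha)/D$, which fails in early rounds (for fixed $D$) under the calibration $\alpha^{-1}=T\epsilon_T^2$. Moreover, since $x_t$ maximizes $\pmb\theta_t$, no fixed-$x$ bound controls $\mathbb{E}[\pmb\theta_t(x_t)-\mu_{t-1}(x_t)\mid\mathcal{F}_{t-1}]$. (The paper does not bound this piece; it asserts that its expectation vanishes, a claim the same selection effect makes delicate.) In the paper the factor $(D/(1-\alpha))^{1/2}$ has a different source, the saturation threshold: $\mathcal{C}_t\sigma_{t-1}(x)=\big(\frac{D}{1-\alpha}t\epsilon_t^2k_{t-1}(x,x)\big)^{1/2}$.

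Second, although you invoke Agrawal--Goyal, your two-term decomposition omits what makes their argument work: the $\mathcal{F}_{t-1}$-measurable point $\overline{x}_t=\arg\min_{x\notin\mathcal{S}_t}\sigma^2_{t-1}(x)$. Knowing that $x_t$ is unsaturated with conditional probability at least $\overline{p}_0$ gives no control of $\pmb\theta_0(x_0)-\pmb\theta_t(x_t)$ on the complementary event. Bounding it through $\pmb\theta_t(x_t)\ge\pmb\theta_t(x_0)$ instead leaves $\sigma_{t-1}(x_0)$, which the played-point variances do not control.

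The paper instead uses three terms:
\begin{itemize}
\item It writes the regret as $[\pmb\theta_0(x_0)-\pmb\theta_0(\overline{x}_t)]+[\pmb\theta_0(\overline{x}_t)-\pmb\theta_t(x_t)]+[\pmb\theta_t(x_t)-\pmb\theta_0(x_t)]$.
\item It bounds the first bracket by $2\mathcal{C}_t\sigma_{t-1}(\overline{x}_t)$ (unsaturation), and the second by $\pmb\theta_0(\overline{x}_t)-\pmb\theta_t(\overline{x}_t)$ (since $x_t$ is the argmax).
\item It transfers $\sigma_{t-1}(\overline{x}_t)$ to the played point through $\mathbb{E}[\sigma^2_{t-1}(x_t)\mid\mathcal{F}_{t-1}]\ge\sigma^2_{t-1}(\overline{x}_t)\,\mathbb{P}(x_t\notin\mathcal{S}_t\mid\mathcal{F}_{t-1})$.
\end{itemize}
This is why $[\overline{p}_0(1-\eta_1^{-2}-\eta_2^{-1})]^{-1/2}$ multiplies exactly the two terms evaluated at $\overline{x}_t$.

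Note also that $\pmb\theta_t(x_0)>\pmb\theta_0(x_0)$ alone does not make $x_t$ unsaturated. The paper intersects it with the uniform event $\sup_x(\pmb\theta_t(x)-\pmb\theta_0(x))/\sigma_{t-1}(x)\le 2\mathcal{C}_t$. The condition $\alpha t\epsilon_t^2<1$ is used in lower-bounding this probability, by keeping $\iota_t$ of Lemma~\ref{lem:LB} bounded so that $\overline{p}_0>0$, not in the contraction step.

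Finally, your Step~3 fix cannot work. Pointwise errors scale like $k_{t-1}^{1/2}(x,x)$, and since $k_{t-1}(x,x)\le 1$ for the kernels considered, a bound in terms of $\sum_t k^2_{t-1}(x_t,x_t)$ would be stronger than anything Cauchy--Schwarz can produce. The paper's proof ends with $\mathbb{E}\sum_t\alpha\sigma^2_{t-1}(x_t)=\mathbb{E}\sum_t k_{t-1}(x_t,x_t)$. So the $k^2_{t-1}(x_t,x_t)$ in the statement should be read as $k_{t-1}(x_t,x_t)$, the quantity later bounded by $4\gamma_T$.
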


Our proof adapts the saturated–unsaturated action decomposition originally introduced by~\cite{agrawal2013thompsonLIN}, combined with expectation-based concentration properties of the $\alpha$-posterior.

\begin{definition}[Saturated decisions]\label{def:US}
At time $t$, a decision point $x \in \mathcal{X}$ is said to be \emph{saturated} if
\(
\pmb\theta_0(x_0) - \pmb\theta_0(x)
\;\geq\;
2\mathcal{C}_t \sigma_{t-1}(x),
\)
where $\sigma_{t-1}(x)$ denotes the posterior standard deviation and
\(
\mathcal{C}_t^2 := \frac{D \alpha t \epsilon_t^2}{1-\alpha}.
\)
The set of saturated decisions at time $t$ is denoted by $\mathcal{S}_t$.
\end{definition}

Definition~\ref{def:US} mirrors the construction in~\cite{agrawal2013thompsonLIN}, with the key distinction that the confidence threshold $\mathcal{C}_t$ is determined by the contraction rate $\epsilon_t$ of the $\alpha$-GP posterior. Notably, the optimal action $x_0$ is always unsaturated.

A proof sketch of Theorem~\ref{thm:GRB}, highlighting the role of first-order posterior concentration, is provided below, while the full proof is deferred to Appendix~\ref{app:thm-GRB}. In Section~\ref{subsec:first-order-concentration}, we establish posterior contraction rates for the $\alpha$-posterior, and in Section~\ref{subsec:bound-common-kernels} we specialize Theorem~\ref{thm:GRB} to commonly used kernels to obtain explicit regret rates.

\paragraph{Proof sketch of Theorem~\ref{thm:GRB}.}
The proof proceeds by decomposing the instantaneous regret into components that can be controlled using posterior uncertainty and posterior contraction properties of the $\alpha$-fractional GP posterior, and then aggregating these bounds over time.

At each round $t$, we introduce an auxiliary decision point $\overline{x}_t := \arg\min_{x \notin \mathcal{C}_t} \sigma_{t-1}^2(x)$, where $\mathcal{C}_t$ denotes the set of saturated decisions. Using this point, the instantaneous regret admits the decomposition:
\begin{align}
\pmb\theta_0(x_0) - \pmb\theta_0(x_t)
=
\bigl[\pmb\theta_0(x_0) - \pmb\theta_0(\overline{x}_t)\bigr]
+
\bigl[\pmb\theta_0(\overline{x}_t) - \pmb\theta_t(x_t)\bigr]
+
\bigl[\pmb\theta_t(x_t) - \pmb\theta_0(x_t)\bigr].
\label{eq:ps1}
\end{align}
This decomposition separates the regret into an approximation error associated with unsaturated decisions, a sampling error arising from maximizing a posterior sample, and an estimation error due to posterior uncertainty. By Definition~\ref{def:US}, unsaturated decisions satisfy $\pmb\theta_0(x_0)-\pmb\theta_0(\overline{x}_t)\le 2\mathcal{C}_t\sigma_{t-1}(\overline{x}_t)$. Applying the Cauchy--Schwarz inequality over the cumulative expectation of the first term in~\eqref{eq:ps1} yields a bound involving $\sum_{t=1}^T \mathcal{C}_t^2$ and $\sum_{t=1}^T \mathbb{E}[\sigma_{t-1}^2(\overline{x}_t)]$:
\[
\sum_{t=1}^T \mathbb{E}\!\left[\pmb{\theta}_0(x_0) - \pmb{\theta}_0(\overline{x}_t)\right]
\lesssim
\Big(\sum_{t=1}^T \mathcal{C}_t^2\Big)^{\frac{1}{2}}
\Big(\sum_{t=1}^T \mathbb{E}[\sigma_{t-1}^2(\overline{x}_t)]\Big)^{\frac{1}{2}}.
\]
To control the estimation error term (last term in~\eqref{eq:ps1}), we add and subtract the posterior mean to write,
$\pmb\theta_t(x_t)-\pmb\theta_0(x_t)
=
\bigl[\pmb\theta_t(x_t)-\mu_{t-1}(x_t)\bigr]
+
\bigl[\mu_{t-1}(x_t)-\pmb\theta_0(x_t)\bigr]$.
Note that, the first term has zero expectation. For the second term, the reproducing property of the kernel $k_{t-1}^\alpha(\cdot, \cdot)$ implies
$
|\mu_{t-1}(x_t)-\pmb\theta_0(x_t)|
\le
\sigma_{t-1}(x_t)\|\mu_{t-1}-\pmb\theta_0\|_{k_{t-1}^\alpha}.
$
Aggregating this bound over time and applying Cauchy--Schwarz inequality shows that the cumulative estimation error is controlled by the product of the cumulative posterior variance along the played actions, $\sum_{t=1}^T \sigma_{t-1}^2(x_t)$, and the cumulative squared RKHS error, $\sum_{t=1}^T \|\mu_{t-1}-\pmb\theta_0\|_{k_{t-1}^\alpha}^2$, yielding:
\[
\sum_{t=1}^T \mathbb{E}\!\left[\pmb{\theta}_t(x_t) - \pmb{\theta}_0(x_t)\right]
\le
\Big(\mathbb{E}\!\sum_{t=1}^T\sigma_{t-1}^2(x_t)\Big)^{\!\frac{1}{2}}
\Big(\sum_{t=1}^T\mathbb{E}\!\left[\|\mu_{t-1}-\pmb{\theta}_0\|_{k_{t-1}^{\alpha}}^2\right]\Big)^{\!\frac{1}{2}}.
\]
Since $x_t$ maximizes $\pmb{\theta}_t(x)$, the same argument that we used to bound the third term applies with $\overline{x}_t$ in place of $x_t$ to bound the second term in \eqref{eq:ps1} as:
\[
\sum_{t=1}^T \mathbb{E}\!\left[\pmb{\theta}_0(\overline{x}_t)-\pmb{\theta}_t(x_t)\right]
\le
\Big(\mathbb{E}\!\sum_{t=1}^T\sigma_{t-1}^2(\overline{x}_t)\Big)^{\!\frac{1}{2}}
\Big(\sum_{t=1}^T\mathbb{E}\!\left[\|\mu_{t-1}-\pmb{\theta}_0\|_{k_{t-1}^{\alpha}}^2\right]\Big)^{\!\frac{1}{2}}.
\]

\medskip
\noindent\textit{Posterior contraction via prior thickness}.
The prior thickness condition in Assumption~\ref{ass:prior}, together with properties of the $\alpha$-fractional posterior and R\'enyi divergence, implies a first-order posterior contraction bound. In particular, Lemma~\ref{lem:post} shows that:
\[
\sum_{t=1}^T \mathbb{E}\!\left[\|\mu_{t-1}-\pmb\theta_0\|_{k_{t-1}^\alpha}^2\right]
\;\lesssim\;\alpha
\sum_{t=1}^T t\epsilon_t^2.
\]

This step translates statistical concentration of the posterior around the true function into a quantitative control on cumulative estimation error, which is a key ingredient in the regret bound.

\medskip
\noindent\textit{Lower bounding the probability of playing an unsaturated decision point}.
To control $\sum_{t=1}^T \mathbb{E}[\sigma_{t-1}^2(\overline{x}_t)]$, we relate $\sigma_{t-1}^2(\overline{x}_t)$ to $\sigma_{t-1}^2(x_t)$ using the fact that GP-TS selects an \emph{unsaturated} decision point with non-negligible probability. Recall that $\overline{x}_t$ is the minimum-variance point among unsaturated decisions and is $\mathcal{F}_{t-1}$-measurable. Conditioning on $\mathcal{F}_{t-1}$, we first note that
$\mathbb{E}[\sigma_{t-1}^2(x_t)\mid\mathcal{F}_{t-1}]
\ge \sigma_{t-1}^2(\overline{x}_t)\,\mathbb{P}(x_t\notin\mathcal{C}_t\mid\mathcal{F}_{t-1})$,
so it suffices to lower bound $\mathbb{P}(x_t\notin\mathcal{C}_t\mid\mathcal{F}_{t-1})$.

Since the optimal decision point $x_0$ is always unsaturated, the event $\{\pmb\theta_t(x_0)>\pmb\theta_t(x)\ \forall x\in\mathcal{C}_t\}$ implies that an unsaturated decision point is selected. We therefore lower bound
$\mathbb{P}(x_t\notin\mathcal{C}_t\mid\mathcal{F}_{t-1})$
by the probability that the posterior sample ranks $x_0$ above all saturated decisions. To obtain a tractable bound, we intersect this ranking event with a second event ensuring that the posterior sample is \emph{uniformly close} to the truth up to the posterior standard deviation. Concretely, define
$\mathcal{E}_t := \left\{\sup_{x\in\mathcal{X}} \frac{\pmb\theta_t(x)-\pmb\theta_0(x)}{\sigma_{t-1}(x)} \le 2\mathcal{C}_t\right\}$.
On $\mathcal{E}_t$, every saturated decision point $x\in\mathcal{C}_t$ satisfies
$\pmb\theta_t(x) \le \pmb\theta_0(x)+2\mathcal{C}_t\sigma_{t-1}(x)
\le \pmb\theta_0(x_0)$
by the definition of saturation. Hence, if in addition $\pmb\theta_t(x_0)>\pmb\theta_0(x_0)$ holds, then necessarily $\pmb\theta_t(x_0)>\pmb\theta_t(x)$ for all $x\in\mathcal{C}_t$, implying $x_t\notin\mathcal{C}_t$. This yields the key lower bound:
\[
\mathbb{P}(x_t\notin\mathcal{C}_t\mid\mathcal{F}_{t-1})
\;\ge\;
\mathbb{P}\big(\pmb\theta_t(x_0)>\pmb\theta_0(x_0)\mid\mathcal{F}_{t-1}\big)
-
\mathbb{P}\big(\mathcal{E}_t^{c}\mid\mathcal{F}_{t-1}\big),
\]
where the subtraction comes from a union-bound argument.

The first term is an anti-concentration event for a one-dimensional Gaussian: conditional on $\mathcal{F}_{t-1}$, $\pmb\theta_t(x_0)$ is Gaussian with mean $\mu_{t-1}(x_0)$, and variance $\sigma_{t-1}^2(x_0)$, so $\mathbb{P}(\pmb\theta_t(x_0)>\pmb\theta_0(x_0)\mid\mathcal{F}_{t-1})$ can be lower bounded in terms of the standardized gap, $(\pmb\theta_0(x_0)-\mu_{t-1}(x_0))/\sigma_{t-1}(x_0)$ (Lemma~\ref{lem:LB}). The second term is controlled by splitting
$\pmb\theta_t-\pmb\theta_0 = (\pmb\theta_t-\mu_{t-1})+(\mu_{t-1}-\pmb\theta_0)$.
The second component is handled via expectation-based posterior contraction bounds, yielding
$\mathbb{P}(\|\mu_{t-1}-\pmb\theta_0\|_{k_{t-1}^{\alpha}}\ge \eta_1\mathcal{C}_t)\le \eta_1^{-2}$
(for suitable $\eta_1$), while the first component is a centered GP sample so Gaussian tail bounds control
$\sup_x (\pmb\theta_t(x)-\mu_{t-1}(x))/\sigma_{t-1}(x)$.
Combining these arguments gives a lower bound of the form
$\mathbb{P}(x_t\notin\mathcal{C}_t\mid\mathcal{F}_{t-1})\ge \overline{p}_0$,
where $\overline{p}_0$ is explicit and strictly positive when $\alpha t\epsilon_t^2<1$. This yields the desired comparison,
$\mathbb{E}\big[\sum_{t=1}^T \sigma_{t-1}^2(\overline{x}_t)\big]
\le \overline{p}_0^{-1}\,\mathbb{E}\big[\sum_{t=1}^T \sigma_{t-1}^2(x_t)\big]$
(up to the high-probability event quantified in the detailed proof).

\medskip
\noindent\textit{Reduction to cumulative posterior variance and aggregation}.
The above probability bound allows us to relate the cumulative variance along unsaturated decisions to the variance along the played actions, yielding
$
\mathbb{E}\!\left[\sum_{t=1}^T \sigma_{t-1}^2(\overline{x}_t)\right]
\le
\overline{p}_0^{-1}
\mathbb{E}\!\left[\sum_{t=1}^T \sigma_{t-1}^2(x_t)\right].
$
Combining this bound with the previous estimates yields:
\[
\mathbb{E}[\mathrm{Regret}(T)]
\;\lesssim\;
\left(\mathbb{E}\!\left[\sum_{t=1}^T k_{t-1}(x_t,x_t)\right]\right)^{\frac{1}{2}}
\left(\sum_{t=1}^T t\epsilon_t^2\right)^{\frac{1}{2}},
\]
up to constants depending on the $\pmb\theta_0(\cdot)$, noise variance, and $\alpha$.

\section{Regret Bounds for Common Kernels}\label{subsec:bound-common-kernels}
Here, we present the bounds of $\mathbb{E}[\mathrm{Regret}(T)]$ based on Theorem~\ref{thm:GRB} for commonly used covariance kernels, such as the squared exponential, Mat\'{e}rn-$\nu$, and rational quadratic kernels respectively.
Using~\cite{chowdhury2021reinforcement}, we have $\mathbb{E}\left[\sum_{t=1}^{T}k_{t-1}(x_t, x_t)\right] \leq 4\gamma_T$, where the maximum information gain after $T$ rounds is $\gamma_T := \max_{A\subset \mathcal{X}:|A|=T}I(y_A; f_A)$.
It has been established by~\cite{srinivas2010gaussian}[Theorem 5] that, for a compact and convex $\mathcal{X}\subset \mathbb{R}^{d}$ with $d\in \mathbb{N}$, $\gamma_T$ is $\mathcal{O}(\log^{d+1} T)$ and $\mathcal{O}(T^{\frac{d(d+1)}{2\nu+d(d+1)}}\log T)$ for the squared exponential and Mat\'{e}rn-$\nu$ kernels respectively. For the Mat\'{e}rn-$\nu$ kernel, a tighter bound for $\gamma_T$ was later provided by~\cite{vakili21a}, where the polynomial dependence on $T$ was reduced to logarithmic factor, yielding $\mathcal{O}(T^{\frac{d}{2\nu+d}}\log^{\frac{2\nu}{2\nu+d}} T)$. 

\paragraph{Squared Exponential kernel.} For the squared exponential kernel, $k_a(x, x') = e^{-a^2\lVert x-x'\rVert^2_2}$ for $a>0$, and the true function lying in the prior RKHS, i.e., $\pmb{\theta}_0(\cdot)  \in \mathcal{H}_0$, we show in Lemma~\ref{lemma:SE-contraction-rate} of Appendix~\ref{app:SE-contraction} that, under suitable assumptions on the prior for $a$, the $\alpha$-posterior contracts at the minimax rate (up to logarithmic terms) of $\epsilon_t \lesssim t^{-\frac{1}{2}}\log^{\frac{d+1}{2}} t$. In particular, one obtains:
\begin{align}\label{eq:e_t_bound_SE}
    \begin{split}
        \sum_{t=1}^{T}t\epsilon_t^{2} &\lesssim \sum_{t=1}^{T} \log^{d+1} t \leq T \log^{d+1}T.
    \end{split}
\end{align}
Therefore, combining this observation in \eqref{eq:e_t_bound_SE} with $\gamma_T$ for squared exponential kernel from~\cite{srinivas2010gaussian,vakili21a} and Theorem \ref{thm:GRB}, we have:
\begin{align}\label{eq:SE-RB}
    \mathbb{E}[\mathrm{Regret}(T)] = \mathcal{O}\left(T^{\frac{1}{2}} \log^{d+1} T\right).
\end{align}

\paragraph{Mat\'{e}rn-$\nu$ kernel.} In case of the Mat\'{e}rn-$\nu$ kernel, $k^{(\nu)}_a(x, x') = 2^{1-\nu}(\Gamma(\nu))^{-1}(\sqrt{2\nu}a\lVert x-x'\rVert_2)^{\nu}K_{\nu}$ $(\sqrt{2\nu}a\lVert x-x'\rVert_2)$ for $a, \nu > 0$,
Lemma~\ref{lemma:matern-contraction-rate} yields the minimax contraction rate of the $\alpha$-posterior for this kernel as $\epsilon_t = t^{-\frac{\nu}{2\nu+d}} \log^{\frac{q}{2+d}} t$, where $q$ denotes the prior parameter of $a$. Hence, in a similar fashion, one can bound $\sum_{t=1}^{T}t\epsilon_t^2$ as:
\begin{align}\label{eq:bound-t-e_t}
    \sum_{t=1}^{T}t\epsilon_t^2 = \sum_{t=1}^{T} t^{1-\frac{2\nu}{2\nu+d}}\log^{\frac{2q}{2+d}} t \leq  \log^{\frac{2q}{2+d}
    } T\int_{0}^{T}t^{1-\frac{2\nu}{2\nu+d}}dt
        = \frac{T^{2-\frac{2\nu}{2\nu+d}}}{2-\frac{2\nu}{2\nu+d}}\log^{\frac{2q}{2+d}} T.
\end{align}
Combining \eqref{eq:bound-t-e_t} with $\gamma_T$ for Mat\'{e}rn-$\nu$ kernel from~\cite{vakili21a} and Theorem \ref{thm:GRB}, we have:
\begin{align}\label{eq:Matern-RB}
    \mathbb{E}[\mathrm{Regret}(T)] = \mathcal{O}\left(T^{\frac{2\nu+3d}{2(2\nu+d)}} \log^{\frac{\nu}{2\nu+d} + \frac{q}{2+d}} T\right).
\end{align}
Both the regret bounds for the squared exponential and Mat\'{e}rn-$\nu$ kernels in \eqref{eq:SE-RB} and \eqref{eq:Matern-RB}, respectively, are derived under variable smoothness levels. While the bound for the squared exponential kernel matches the rate obtained in~\cite{Ray&Gopalan2017} for infinitely smooth function classes, the bound for the Mat\'{e}rn-$\nu$ kernel is looser than the state-of-the-art by a factor of $T^{\frac{d}{2(2\nu+d)}}$.

\paragraph{Rational Quadratic kernel.} 
For the rational quadratic kernel,
\(
k_{\mathrm{RQ}}(x,x')
=
\left(1 + \frac{\|x-x'\|_2^2}{2\nu \ell^2}\right)^{-\nu}
\)
with $\nu, \ell > 0$,
Lemma~\ref{lemma:rq-contraction-rate} characterizes the contraction rate of the corresponding $\alpha$-posterior as $\epsilon_t \asymp  t^{-\frac{\nu}{2\nu+d}}$. Consequently, one can bound the cumulative contraction term $\sum_{t=1}^{T} t \epsilon_t^2$ in a similar way as in~\eqref{eq:bound-t-e_t}. 
Combining this with the bound on the maximum information gain for the rational quadratic kernel from Lemma~\ref{lem:rq-gamma} that is \(\gamma_T
=
\mathcal{\tilde O} (T^{\frac{d}{2\nu+d}})\), and applying Theorem~\ref{thm:GRB}, we obtain the following regret bound:
\begin{align}\label{eq:RQ-RB}
\mathbb{E}[\mathrm{Regret}(T)]
=
\mathcal{\tilde O}\!\left(
T^{\frac{2\nu+3d}{2(2\nu+d)}}
\right).
\end{align}
Similar to the squared exponential and Mat\'ern-$\nu$ cases, the regret bound for the rational quadratic kernel in~\eqref{eq:RQ-RB} is derived under a variable smoothness regime. The resulting rate reflects the polynomial eigenvalue decay of the rational quadratic kernel and follows directly from our general regret bound by substituting kernel-specific expressions for $\gamma_T$ and $\epsilon_T$.

\paragraph{Variance inflation in GP-TS.} The analysis of~\cite{Ray&Gopalan2017} introduces an explicit variance inflation factor $\gamma_T$ in GP-TS in order to ensure sufficient exploration and to simplify the regret analysis. In our framework, variance inflation arises implicitly through the fractional posterior parameter $\alpha$, with the effective inflation factor given by $\alpha^{-1}$. Assuming $T$ is known in advance, our regret bound holds for  $\alpha^{-1}= T\epsilon_T^2\; (> t\epsilon_t^2)$, where $\epsilon_T$ denotes the posterior contraction rate at time $T$. Instantiating this condition for commonly used kernels yields inflation factors that match those in~\cite{Ray&Gopalan2017}, up to logarithmic terms. In particular, the contraction rate for the squared exponential kernel implies $\alpha^{-1} = \mathcal{\tilde{O}}(\log^{d+1} T)$. For the Mat\'ern-$\nu$ kernel, $\epsilon_T^2 = \mathcal{\tilde{O}}(T^{-\frac{2\nu}{2\nu+d}})$, leading to $\alpha^{-1} = \mathcal{\tilde{O}}(T^{\frac{d}{2\nu+d}})$. These inflation factors coincide, up to logarithmic factors, with the information gain term $\gamma_T$ used in~\cite{Ray&Gopalan2017}. 

This comparison suggests that the variance inflation used in existing analyses of GP-TS can be interpreted within our framework as arising from posterior contraction properties, rather than being introduced solely as an algorithmic choice. From this perspective, fractional posteriors offer a statistically motivated way to parameterize variance inflation in GP-TS. At the same time, our analysis highlights an open question regarding the relationship between the information gain parameter $\gamma_T$ and the posterior contraction rate $\epsilon_T^2/T$.

\section{\texorpdfstring{First-order Concentration Properties of the $\alpha$-Posterior Distribution}{First-order Concentration Properties of the alpha-Posterior Distribution}}\label{subsec:first-order-concentration}
In this section, we provide a more general result characterizing the first-order concentration properties of any nonparametric $\alpha$-posterior distribution supported on $\Theta \subseteq L^2(\mathcal X)$, induced by a Gaussian process prior, denoted as $\Pi(\cdot)$ throughout, with reproducing kernel $k(\cdot, \cdot)$. The general observation model has density $p_{\pmb\theta}(y\mid x)$, with the true data-generating distribution given by $p_{0}(y \mid x)$ corresponding to $\pmb \theta=\pmb \theta_0$. For any measurable set $A\in \Theta$, we define the $\alpha$-posterior distribution (for any $\alpha\in(0,1)$) as: 
\begin{align}\label{eq:alpha-posterior}
    \Pi_{t,\alpha}(A \mid \mathcal{F}_t)&= \frac{ \int_{A}  \prod_{s=1}^{t} [p_{\pmb \theta}(y_s \mid x_s)]^{\alpha}\Pi(d\pmb \theta) } { \int_\Theta  \prod_{s=1}^{t} [p_{\pmb \theta}(y_s \mid x_s)]^{\alpha} \Pi(d\pmb \theta)  }.
\end{align}
In the preceding definition given in \eqref{eq:alpha-posterior}, the policy that selects the decision point point $x_s$ at iteration $s$ does not appear in either the numerator or the denominator because it cancels out, as it is independent of $\pmb{\theta}$. We now establish a finite-time concentration result for the $\alpha$-fractional GP posterior, which serves as the main statistical ingredient in our regret analysis. In contrast to standard posterior contraction results that are typically stated asymptotically or in probability, our analysis requires both high-probability and expectation-based bounds that hold uniformly over the sequentially collected data. Leveraging Assumption~\ref{ass:prior}, we derive such bounds as follows.

\begin{lemma}[$\alpha$-posterior contraction]\label{lem:post}
Fix $\alpha\in(0,1)$ and $\lambda>0$. Suppose the Gaussian process prior satisfies the integrability condition
$\int \Pi(d\pmb \theta)\exp\{(1-\alpha)\|\pmb \theta\|_k^2\}<\infty$,
and that Assumption~\ref{ass:prior} holds for all $t>t_0$. Then, for any $j>0$, the following statements hold:
\begin{align}\label{eq:post-1}
\mathbb{P}_{0}\!\left(
\Pi_{t,\alpha}\!\left(
\frac{1}{2}\|\pmb \theta-\pmb \theta_0\|_{k^\alpha}^2
+
D_{\alpha}^{(t)}(\pmb \theta,\pmb \theta_0)
\ge
\frac{(D+j)\alpha}{2(1-\alpha)}\,t\epsilon_t^2
\;\middle|\;
\mathcal{F}_{t}
\right)
\Big|\,
A_t
\right)
\le
2C(1-\alpha,\pmb\theta_0,k)
e^{-\frac{j\alpha}{4}t\epsilon_t^2},
\end{align}
and:
\begin{align}\label{eq:post-2}
\mathbb{E}\!\left[
\mathbb{E}\!\left[
\frac{1}{2}\|\pmb \theta-\pmb \theta_0\|_{k^\alpha}^2
+
D_{\alpha}^{(t)}(\pmb \theta,\pmb \theta_0)
\;\middle|\;
\mathcal{F}_{t}
\right]
\Big|\,
A_t
\right]
\le
\frac{(D+2)\alpha}{2(1-\alpha)}\,t\epsilon_t^2
+
\frac{4C(1-\alpha,\pmb\theta_0,k)
e^{-\frac{\alpha}{4}t\epsilon_t^2}}{1-\alpha},
\end{align}
where $C(1-\alpha,\pmb\theta_0,k):= e^{{ (1-\alpha)} \|\pmb \theta_0\|_k^2} \int \Pi(d\pmb \theta)e^{{ (1-\alpha)} \|\pmb \theta\|_k^2}$ is a known constant depending only on the prior, kernel $k(\cdot, \cdot)$, and $\pmb\theta_0(\cdot)$.
\end{lemma}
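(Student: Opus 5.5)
We follow the variational (Donsker--Varadhan) route of \cite{bhattacharya2019bayesian}, extended to carry the RKHS term. Write $r_t(\pmb\theta)=\sum_{s=1}^t\log\{p_{\pmb\theta}(y_s\mid x_s)/p_0(y_s\mid x_s)\}$, so that $\Pi_{t,\alpha}(d\pmb\theta\mid\mathcal F_t)\propto e^{\alpha r_t(\pmb\theta)}\Pi(d\pmb\theta)$.

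\textbf{Step 1 (exponential moment).} For a test function $g(\pmb\theta)=\tfrac12\|\pmb\theta-\pmb\theta_0\|_{k^\alpha}^2+D^{(t)}_\alpha(\pmb\theta,\pmb\theta_0)$, multiplied by a constant $(1-\alpha)$, we control the exponential moment. Conditionally on $A_t$, the definition of the R\'enyi divergence gives
\[
\mathbb{E}_0\big[e^{\alpha r_t(\pmb\theta)}\mid A_t\big]=e^{-(1-\alpha)D^{(t)}_\alpha(\pmb\theta,\pmb\theta_0)}.
\]
Here the sequential choice of $x_s$ is handled by conditioning on $A_t$; the policy factors cancel as in \eqref{eq:alpha-posterior}. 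By Fubini,
\[
\mathbb{E}_0\Big[\int e^{\alpha r_t+(1-\alpha)D_\alpha^{(t)}}\,e^{(1-\alpha)\frac{1}{2}\|\pmb\theta-\pmb\theta_0\|_{k^\alpha}^2}\,\Pi(d\pmb\theta)\,\Big|\,A_t\Big]=\int e^{(1-\alpha)\frac12\|\pmb\theta-\pmb\theta_0\|^2_{k^\alpha}}\,\Pi(d\pmb\theta).
\]
Using $\|\cdot\|_{k^\alpha}^2=\alpha\|\cdot\|_k^2$ and $\tfrac12\|a-b\|^2\le\|a\|^2+\|b\|^2$, this is at most $C(1-\alpha,\pmb\theta_0,k)$. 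Markov's inequality then shows that the integral is at most $C e^{j\alpha t\epsilon_t^2/4}$ except on an event of probability $e^{-j\alpha t\epsilon_t^2/4}$.

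\textbf{Step 2 (variational bound and denominator).} The Donsker--Varadhan/Gibbs inequality gives, for any measurable set $B$,
\[
\Pi_{t,\alpha}(B)\le \frac{\log\int e^{\alpha r_t+(1-\alpha)g}\,d\Pi+\mathrm{KL}\big(\Pi_{t,\alpha}\,\|\,\Pi\big)-\alpha\int r_t\,d\Pi_{t,\alpha}}{(1-\alpha)\inf_B g}.
\]
A cleaner equivalent is to write
\[
\Pi_{t,\alpha}(g\ge M)\le e^{-(1-\alpha)M}\,\frac{\int e^{\alpha r_t+(1-\alpha)g}\,d\Pi}{\int e^{\alpha r_t}\,d\Pi}.
\]
The numerator is handled by Step 1. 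For the denominator we use Assumption~\ref{ass:prior}: by the standard Ghosal--Ghosh--van der Vaart argument (Jensen on $B(\pmb\theta_0,\epsilon_t)$, with second moments of $r_t$ controlled by $D_2^{(t)}$ and Chebyshev's inequality), we get
\[
\int e^{\alpha r_t}\,d\Pi\ge \Pi(B)\,e^{-\alpha(D/4+\cdots)t\epsilon_t^2}\ge e^{-D\alpha t\epsilon_t^2/2},
\]
except on an event of probability at most $e^{-j\alpha t\epsilon_t^2/4}$. To get exponential rather than polynomial failure probability, we apply Markov to $e^{-\alpha r_t/2}$ averaged over the renormalized prior restricted to $B$; the $D_2$ ball controls precisely this moment.

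Setting $M=\frac{(D+j)\alpha}{2(1-\alpha)}t\epsilon_t^2$ and combining the two events gives \eqref{eq:post-1}, with the constant $2C$ arising from the union of the two events. This step is where I expect the main obstacle: matching the exact constants, and getting an exponential-type tail for the denominator from a $D_2$-ball rather than a Kullback--Leibler ball.

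\textbf{Step 3 (expectation).} To obtain \eqref{eq:post-2}, we integrate the tail. Using the same two events, on the good event the posterior expectation satisfies
\[
\mathbb{E}[g\mid\mathcal F_t]\le \frac{(D+2)\alpha}{2(1-\alpha)}\,t\epsilon_t^2+\frac{1}{1-\alpha}\int_0^\infty \Pi_{t,\alpha}(\cdot\ge\cdot+u)\,du.
\]
On the bad event we instead use the direct bound
\[
\mathbb{E}[g\mid\mathcal F_t]\le \frac{1}{1-\alpha}\log\frac{\int e^{\alpha r_t+(1-\alpha)g}\,d\Pi}{\int e^{\alpha r_t}\,d\Pi},
\]
which follows from Jensen. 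Integrating \eqref{eq:post-1} over $j\ge 2$ via $\mathbb{E}X\le a+\int_a^\infty P(X\ge u)\,du$ produces the term $4Ce^{-\alpha t\epsilon_t^2/4}/(1-\alpha)$: the tail integral $\int_2^\infty 2Ce^{-j\alpha t\epsilon_t^2/4}\,dj\cdot\frac{\alpha t\epsilon_t^2}{2(1-\alpha)}$ evaluates to a constant multiple of $e^{-\alpha t\epsilon_t^2/2}/(1-\alpha)$, which is bounded by the stated term.
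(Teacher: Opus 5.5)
Your architecture matches the paper's: a Chernoff bound on the posterior with weight $e^{(1-\alpha)g}$, the R\'enyi identity $\mathbb{E}_0[e^{\alpha r_t}\mid A_t]=e^{-(1-\alpha)D^{(t)}_\alpha}$ for the numerator, a $D_2$-ball with Jensen and Markov for the denominator, and layer-cake integration for \eqref{eq:post-2}. However, Step 2 has a genuine gap.

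The claim that $\int e^{\alpha r_t}\,d\Pi\ge e^{-D\alpha t\epsilon_t^2/2}$ outside an event of probability $e^{-j\alpha t\epsilon_t^2/4}$, for every $j>0$, cannot come from the Markov argument you describe. Markov at a fixed threshold gives a fixed failure probability, so pushing the failure probability down to $e^{-j\alpha t\epsilon_t^2/4}$ forces the threshold to degrade with $j$. Concretely, use $\bigl[\int\tilde\Pi(d\pmb\theta)e^{\alpha r_t}\bigr]^{-1/\alpha}\le\int\tilde\Pi(d\pmb\theta)e^{-r_t}$, whose mean is at most $e^{D\alpha t\epsilon_t^2/4}$ on $B(\pmb\theta_0,\epsilon_t)$. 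This gives $\int e^{\alpha r_t}d\Pi\ge\Pi(B)e^{-(D+j)\alpha t\epsilon_t^2/4}\ge e^{-(2D+j)\alpha t\epsilon_t^2/4}$ off an event of probability $e^{-j\alpha t\epsilon_t^2/4}$; this is the paper's event $S_{t,\epsilon_t}^c$. Your $j$-free threshold is compatible with that failure probability only for a bounded range of $j$, whereas \eqref{eq:post-2} integrates over all $s\ge 2$. Once the threshold is corrected, your extra Markov step on the numerator at level $Ce^{j\alpha t\epsilon_t^2/4}$ destroys the bound. The gain $e^{-(1-\alpha)M}=e^{-(D+j)\alpha t\epsilon_t^2/2}$ is exactly cancelled by $e^{(2D+j)\alpha t\epsilon_t^2/4}\cdot e^{j\alpha t\epsilon_t^2/4}$, leaving no decay in $j$ at all.

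The fix, which is what the paper does, is not to apply Markov to the numerator. On $S_{t,\epsilon_t}^c$, use the deterministic denominator bound and take the expectation of the numerator directly, using your Step 1 identity in expectation rather than through Markov:
$\mathbb{E}\bigl[\Pi_{t,\alpha}(g\ge M\mid\mathcal F_t)\,\mathrm{I}(S^c_{t,\epsilon_t})\mid A_t\bigr]\le e^{(2D+j)\alpha t\epsilon_t^2/4}e^{-(1-\alpha)M}\,\mathbb{E}\bigl[\int e^{\alpha r_t+(1-\alpha)g}\,d\Pi\mid A_t\bigr]\le Ce^{-j\alpha t\epsilon_t^2/4}$.
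Then add $\mathbb{P}_0(S_{t,\epsilon_t}\mid A_t)\le e^{-j\alpha t\epsilon_t^2/4}$, using $\Pi_{t,\alpha}\le 1$.

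This also clarifies what \eqref{eq:post-1} asserts. Its left side is the $\mathbb{P}_0$-average of the posterior mass, bounded by $(1+C)e^{-j\alpha t\epsilon_t^2/4}\le 2Ce^{-j\alpha t\epsilon_t^2/4}$ since $C\ge 1$. The factor $2C$ does not come from a union of two bad events; your union-bound route would give a high-probability statement, which converts to at best $(C+2)$ times the exponential.

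In this expectation form, Step 3 needs neither the good/bad split nor the Jensen bound $\mathbb{E}[g\mid\mathcal F_t]\le\frac{1}{1-\alpha}\log(\cdot)$. As written, you never control the bad-event contribution of that Jensen bound anyway. Fubini plus the layer-cake formula with $u=\frac{(D+s)\alpha}{2(1-\alpha)}t\epsilon_t^2$ gives \eqref{eq:post-2} directly. Your evaluation of the tail integral as $\frac{4C}{1-\alpha}e^{-\alpha t\epsilon_t^2/2}$, which is at most the stated term, is correct.
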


The proof of Lemma~\ref{lem:post} is given in Appendix~\ref{app:Post}. The first inequality in \eqref{eq:post-1} provides a high-probability bound on the $\alpha$-posterior mass assigned to functions that are far from the truth, thereby characterizing the finite-sample contraction behavior of the posterior. In particular, it shows that the posterior probability of large deviations decays exponentially at rate $t\epsilon_t^2$. The second inequality in~\eqref{eq:post-2} establishes a complementary concentration bound in expectation. This bound plays a crucial role in our regret analysis, as it allows us to control the cumulative estimation error of the posterior mean without constructing high-probability confidence sets or discretizing the domain. Intuitively, since the R\'enyi divergence term $D_{\alpha}^{(t)}(\pmb \theta,\pmb \theta_0)$ scales linearly with the number of observations $t$, the sequence $\epsilon_t$ governs the rate at which the $\alpha$-posterior concentrates around the true function.

Both results are adapted from posterior concentration techniques available in the Bayesian statistics literature on fractional posteriors, but are stated here in a form tailored to sequential decision-making. In particular, the expectation-based bound in \eqref{eq:post-2} is essential for deriving regret guarantees in continuous action spaces. We further discuss, in Remark~\ref{remark:diff-standard-regret}, a subtle yet important distinction between our analysis and standard $\alpha$-posterior concentration results that underpins the subsequent regret bounds.

\begin{remark}
\label{remark:diff-standard-regret}
Typical analysis of the $\alpha$-posterior concentration bounds measure deviation using only the $D_{\alpha}^{(t)}(\pmb \theta,\pmb \theta_0)$ term. However, in the analysis of the regret, we needed to quantify a bound on the expectation of $\|\pmb \theta-\pmb \theta_0\|_{k_{t}}$, where $k_{t}(\cdot, \cdot)$ is the kernel of the GP posterior at time $t$. Using the identity derived in~Lemma~\ref{lem:identity}, we express $\|\pmb \theta-\pmb \theta_0\|_{k_{t}}=\|\pmb \theta-\pmb \theta_0\|_{k} + \alpha \lambda^{-1}\sum_{s=1}^{t} (\theta(x_s)-\theta_0(x_s))^2$. Note that for Gaussian noise with variance $\lambda$,  $2 D_{\alpha}^{(t)}(\pmb \theta,\pmb \theta_0)= \alpha \lambda^{-1}\sum_{s=1}^{t} (\theta(x_s)-\theta_0(x_s))^2$. Accordingly, we modify the analysis to incorporate this additional term $\|\pmb \theta-\pmb \theta_0\|_{k^\alpha}$. Also, it is only due to this modified definition of the discrepancy measure, we needed an additional assumption (highlighted in the statement of Lemma~\ref{lem:post}) on the prior that requires its tails to be sub-Gaussian. 
\end{remark}

\section{Conclusion}\label{sec:conclusion}
In this paper, we study GP-TS through the lens of fractional posteriors and show that the commonly used variance inflation in GP-TS can be formalized in a principled manner via an $\alpha$-fractional GP posterior. This perspective provides a statistical interpretation of variance inflation as a mechanism that balances posterior concentration and exploration, rather than as an ad hoc analytical device.

By leveraging tools from the Bayesian statistics literature on fractional posteriors~\citep{bhattacharya2019bayesian}, we develop an expectation-based analysis that avoids domain discretization and yields finite-time, near-optimal frequentist regret bounds. Our results apply to the widely used squared exponential and Mat\'{e}rn-$\nu$ kernels, and match existing lower bounds up to logarithmic factors, thereby resolving a key limitation in prior analyses of GP-TS.

Beyond the specific regret guarantees obtained here, our analysis introduces a modular framework that decouples posterior contraction properties from kernel-dependent variance growth. This structure suggests that regret bounds for GP-TS can be derived systematically for a broader class of kernels, provided corresponding posterior contraction rates are available.

A natural direction for future work is to extend this framework to Bayesian Optimization algorithms with richer model structures, such as Gaussian process priors composed with activation functions or neural network feature maps. Understanding how fractional posteriors interact with such nonlinear representations will potentially provide a principled path toward regret guarantees for modern Bayesian Optimization methods used in high-dimensional and nonparametric settings.

\bibliography{references}

\appendix


\section{Reproducing Kernel Hilbert Space (RKHS) and Mercer's Theorem}\label{app:RKHS-Mercer}

We briefly review the definition and key properties of RKHS in Definition \ref{def:RKHS} and Remark \ref{remark:RKHS} below.

\begin{definition}[Reproducing kernel Hilbert space (RKHS)]\label{def:RKHS}
Let $k:\mathcal X \times \mathcal X \to \mathbb{R}$ be a positive definite kernel with respect to a finite Borel measure supported on $\mathcal X$. A Hilbert space $\mathcal H_{k}$ of functions on $\mathcal X$ equipped with the inner product $\langle \cdot, \cdot \rangle_{\mathcal H_k}$ is called reproducing kernel Hilbert space (RKHS) with $k(\cdot, \cdot)$ if it satisfies: (a) $\forall x\in \mathcal X, k(\cdot, x)\in \mathcal H_k$ and (b) $\forall x\in \mathcal X$ and $f\in \mathcal H_k$, $\langle f, k(\cdot, x)\rangle_{\mathcal H_k} = f(x)$ (reproducing or evaluation property).
\end{definition}

\begin{remark}[An RKHS and its reproducing kernel uniquely determine one another]\label{remark:RKHS}
The structure of an RKHS, as in Definition \ref{def:RKHS} above, is completely characterized by its kernel, and conversely, every positive definite kernel defines a unique RKHS~\citep{aronszajn1950theory}.
\end{remark}

We state Mercer's theorem in Theorem \ref{thm:Mercer} below without proof, which provides an alternative representation for GP kernels as an inner product of infinite dimensional feature maps~\citep{kanagawa2018gaussian}[Theorem 4.1].

\begin{theorem}[Mercer's theorem]\label{thm:Mercer}
    Let $k(\cdot, \cdot)$ be a continuous kernel with respect to a finite Borel measure on $\mathcal X$. There exists $\{(\lambda_m, \phi_m)\}_{m=1}^{\infty}$ such that $\lambda_m\in \mathbb{R}^{+}$, $\phi_m\in \mathcal H_k$, for $m\geq 1$, and:
    \begin{align*}
        k(x, x') = \sum_{m=1}^{\infty}\lambda_m \phi_m(x)\phi_m(x'),
    \end{align*}
    where $\{\lambda_m\}_{m=1}^{\infty}$ and $\{\phi_m\}_{m=1}^{\infty}$ are referred to as the eigenvalues and the eigenfeatures (or eigenfunctions) of $k(\cdot, \cdot)$ respectively.
\end{theorem}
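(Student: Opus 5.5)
The statement to prove is Mercer's theorem (Theorem~\ref{thm:Mercer}). The standard route goes through the integral operator of $k$ on $L^2(\mathcal X,\mu)$, where $\mu$ is the given finite Borel measure. I will assume $\mathcal X$ is compact and $\mu$ has full support, as in the setting $\mathcal X=[0,1]^d$.

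\textbf{Step 1: the integral operator and its eigendecomposition.} Define
\[
(T_k f)(x)=\int_{\mathcal X} k(x,x')f(x')\,\mu(dx').
\]
Since $k$ is continuous on the compact set $\mathcal X\times\mathcal X$, it is bounded and square integrable, so $T_k$ is Hilbert--Schmidt, hence compact. Symmetry of $k$ makes $T_k$ self-adjoint. Positive definiteness, passed to the limit from Riemann-type sums of $\sum_{i,j} c_i c_j k(x_i,x_j)\ge 0$, gives $\langle T_k f,f\rangle\ge 0$.

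The spectral theorem for compact self-adjoint positive operators then yields an orthonormal system $\{\phi_m\}$ in $L^2$ and eigenvalues $\lambda_m\downarrow 0$ with $\lambda_m\ge0$. Discarding the kernel of $T_k$ leaves $\lambda_m>0$. For $\lambda_m>0$, each $\phi_m=\lambda_m^{-1}T_k\phi_m$ has a continuous version, because $x\mapsto k(x,\cdot)$ is uniformly continuous into $L^2$.

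\textbf{Step 2: uniform convergence of the expansion (the main obstacle).} Let
\[
k_n(x,x')=k(x,x')-\sum_{m\le n}\lambda_m\phi_m(x)\phi_m(x').
\]
This $k_n$ is the kernel of the positive operator $T_k-\sum_{m\le n}\lambda_m\phi_m\otimes\phi_m$, so it is again a continuous positive definite kernel. Hence $k_n(x,x)\ge0$, which gives
\[
\sum_m\lambda_m\phi_m(x)^2\le k(x,x)\le \sup_{\mathcal X} k.
\]
By Cauchy--Schwarz, the tails of $\sum_m\lambda_m\phi_m(x)\phi_m(x')$ are uniformly small in $x'$ for each fixed $x$, so the series converges to some continuous-in-$x'$ limit. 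That limit agrees with $k(x,\cdot)$ in $L^2$, both induce the same operator, and continuity upgrades the agreement to pointwise equality on the diagonal, given full support of $\mu$.

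Dini's theorem then applies: the partial sums $\sum_{m\le n}\lambda_m\phi_m(x)^2$ are continuous and increase to the continuous function $k(x,x)$ on the compact set $\mathcal X$, so the convergence is uniform. Applying Cauchy--Schwarz once more transfers uniform convergence to the off-diagonal series. I expect this step to carry most of the work, specifically showing that the pointwise diagonal limit equals $k(x,x)$.

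\textbf{Step 3: membership in the RKHS.} The claim $\phi_m\in\mathcal H_k$ follows from
\[
\phi_m=\lambda_m^{-1}\int k(\cdot,x')\phi_m(x')\,\mu(dx'),
\]
which is a Bochner integral of the $\mathcal H_k$-valued map $x'\mapsto k(\cdot,x')$. This map is continuous and bounded, with $\|k(\cdot,x')\|_k^2=k(x',x')$, so the integral lies in $\mathcal H_k$. Equivalently, one can verify that $\sqrt{\lambda_m}\phi_m$ form an orthonormal system in $\mathcal H_k$, using the reproducing property together with the expansion from Step 2.
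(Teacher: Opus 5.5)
The paper does not prove this statement. It quotes it without proof from Kanagawa et al.\ (2018, Theorem~4.1), so there is no argument of the paper's to compare yours against.

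Your proof is the standard one, and it is correct. It proceeds through the Hilbert--Schmidt integral operator, the spectral theorem, continuous versions of the eigenfunctions, the diagonal bound $\sum_m\lambda_m\phi_m(x)^2\le k(x,x)$, identification of the pointwise limit, Dini's theorem, and a Bochner-integral argument for $\phi_m\in\mathcal H_k$. It relies on the hypotheses you add: $\mathcal X$ compact, $\mu$ of full support, and $k$ symmetric positive definite. These are exactly the hypotheses of the cited source, and they are genuinely needed. Without full support, for instance, the expansion only holds on $\mathrm{supp}\,\mu\times\mathrm{supp}\,\mu$.

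Two steps you compress both rest on continuity plus full support, and they are worth writing out.
\begin{itemize}
\item The bound $k_n(x,x)\ge 0$ follows from positivity of the operator by testing against $f=\mathbf{1}_U$, where $U$ is a small neighbourhood of a point with $k_n(x,x)<0$. You only need the diagonal here, not full pointwise positive definiteness of $k_n$.
\item Identifying the limit with $k(x,\cdot)$ needs more than $\langle k(x,\cdot),\phi_m\rangle=(T_k\phi_m)(x)=\lambda_m\phi_m(x)$. You also need $k(x,\cdot)\perp\ker T_k$. This holds because for $f\in\ker T_k$ the continuous function $T_kf$ vanishes $\mu$-a.e., hence everywhere. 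This then gives equality for every $x'$, not just on the diagonal.
\end{itemize}

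Finally, when $T_k$ has finite rank your construction gives only finitely many terms. The paper's indexing over all $m\ge 1$ with $\lambda_m>0$ should be read as ``countably many (possibly finitely many)'', as in the cited theorem.
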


\section{Posterior-prior RKHS Containment}\label{app:H0inHt}

\begin{lemma}\label{lem:H0inHt}
Let $k:\mathcal X\times \mathcal X\rightarrow \mathbb{R}$ be the reproducing kernel (prior covariance). Given the observation history $\mathcal{H}_t = \{A_t, y_{1:t}\}$ as in Section \ref{sec:Prob}, the $\alpha$-posterior kernel is, $k_t(x,x') = k(x,x') - k(x, A_t)^\top \big(k(A_t, A_t) + \lambda I_t\big)^{-1} k(A_t, x')$.
Then, the posterior RKHS $(\mathcal{H}_t)$ is contained inside the prior RKHS $(\mathcal{H}_0)$, i.e., $\mathcal{H}_t\subseteq \mathcal{H}_0$.
\end{lemma}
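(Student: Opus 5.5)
A natural route is the Aronszajn comparison criterion for reproducing kernels. If $k_1, k_2$ are positive semidefinite kernels on $\mathcal X$, then $\mathcal H_{k_1}\subseteq \mathcal H_{k_2}$ exactly when $c\,k_2 - k_1$ is positive semidefinite for some $c>0$ (Aronszajn, 1950). In that case one also gets $\|f\|_{k_2}\le \sqrt{c}\,\|f\|_{k_1}$. So the plan is to show that $k - k_t$ is a positive semidefinite kernel, which gives $c=1$, and to check that $k_t$ is itself positive semidefinite, so that $\mathcal H_t$ is well defined.

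\textbf{Step 1: $k - k_t$ is positive semidefinite.} By the formula in the statement,
\[
k(x,x')-k_t(x,x') = k(x,A_t)^\top M\, k(A_t,x'), \qquad M := \big(k(A_t,A_t)+\lambda I_t\big)^{-1}.
\]
Since $k(A_t,A_t)$ is positive semidefinite and $\lambda>0$, the matrix $M$ is symmetric positive definite. Fix points $z_1,\dots,z_m$ and coefficients $c\in\mathbb R^m$, and set $v := \sum_i c_i\, k(A_t,z_i)\in\mathbb R^t$. Then
\[
\sum_{i,j}c_ic_j\big(k-k_t\big)(z_i,z_j) = v^\top M v \ge 0 .
\]
Equivalently, $k-k_t$ is a finite-rank kernel, $\sum_{a,b}M_{ab}\,k(\cdot,x_a)k(\cdot,x_b)$, built from functions $k(\cdot,x_a)\in\mathcal H_0$.

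\textbf{Step 2: $k_t$ is positive semidefinite.} $k_t$ is the covariance of the Gaussian conditional $\pmb\theta\mid y_{1:t}$ when $\pmb\theta\sim \mathrm{GP}(0,k)$ and noise has variance $\lambda$, so it is positive semidefinite. For a direct algebraic check, take any finite set $Z$ and form the Gram matrix of $k$ on $Z\cup A_t$, adding $\lambda I$ on the $A_t$ block. This matrix is positive semidefinite, and $k_t(Z,Z)$ is its Schur complement with respect to the positive definite block $k(A_t,A_t)+\lambda I_t$. Schur complements of such matrices are positive semidefinite, so $k_t(Z,Z)\succeq 0$.

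\textbf{Step 3: conclude.} Apply Aronszajn's theorem with $k_1=k_t$, $k_2=k$ and $c=1$. This gives $\mathcal H_t\subseteq\mathcal H_0$, together with $\|f\|_{k}\le\|f\|_{k_t}$.

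For a self-contained argument instead of citing the theorem, use the sum-of-kernels decomposition. Write $k = k_t + r$ with $r := k-k_t$ positive semidefinite. The standard result states that $\mathcal H_k = \mathcal H_{k_t}+\mathcal H_r$, with norm
\[
\|f\|_k^2=\min\big\{\|g\|_{k_t}^2+\|h\|_r^2 : f=g+h\big\}.
\]
Taking $h=0$ shows that every $g\in\mathcal H_{k_t}$ lies in $\mathcal H_k$ and satisfies $\|g\|_k\le\|g\|_{k_t}$.

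\textbf{Main obstacle.} The nontrivial ingredient is the sum-of-kernels / Aronszajn containment theorem. I would cite it, for example from \citet{aronszajn1950theory}, rather than reprove it. The remaining work is the positive-semidefiniteness checks in Steps 1 and 2, both of which are routine.
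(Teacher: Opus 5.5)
Your proposal is correct and takes the same route as the paper. The paper's proof asserts that $k-k_t$ is positive semidefinite and then invokes the kernel-inclusion theorem of Berlinet and Thomas-Agnan (Theorem 12), which is the Aronszajn comparison criterion you apply with $c=1$; your Steps 1 and 2 supply the positive-semidefiniteness checks that the paper leaves implicit, namely the finite-rank form $v^\top M v$ and the Schur-complement argument.
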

\begin{proof}
Using the fact that, $k(\cdot, \cdot)-k_t(\cdot, \cdot)$ is positive semi definite, a direct consequence of~\cite{berlinet2003reproducing}[Theorem 12] yields the result.
\end{proof}

\section{RKHS Norm Identity}\label{app:identity}

\begin{lemma}\label{lem:identity}
For any $f\in \mathcal{H}_t \subseteq \mathcal{H}_0$, it can be shown that:
$$
\|f\|_{k_{t}}^2= \|f\|_{k}^2 + \lambda^{-1}\sum_{s=1}^{t}(f(x_s))^2.
$$
\end{lemma}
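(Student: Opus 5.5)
The plan is to identify the reproducing kernel of an explicitly modified inner product on $\mathcal H_0$ and then invoke uniqueness (Remark~\ref{remark:RKHS}). On the vector space $\mathcal H_0$ I define
\[
\langle f,g\rangle_{*} := \langle f,g\rangle_{k} + \lambda^{-1}\sum_{s=1}^{t} f(x_s)g(x_s).
\]

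\textbf{Step 1: $(\mathcal H_0,\langle\cdot,\cdot\rangle_*)$ is an RKHS.} Point evaluations are bounded on $\mathcal H_0$, since $|f(x)|\le k(x,x)^{1/2}\|f\|_k$. Hence
\[
\|f\|_k^2\le \|f\|_*^2\le \Big(1+\lambda^{-1}\sum_{s} k(x_s,x_s)\Big)\|f\|_k^2 .
\]
So $\|\cdot\|_*$ is equivalent to $\|\cdot\|_k$, which gives completeness, and evaluations remain bounded. The space is therefore an RKHS with some kernel $k_*$. It then suffices to show that $k_*=k_t$. This gives $\mathcal H_t=\mathcal H_0$ as sets (consistent with Lemma~\ref{lem:H0inHt}) and $\|f\|_{k_t}=\|f\|_*$, which is the claimed identity.

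\textbf{Step 2: verify the reproducing property of $k_t$ in $\langle\cdot,\cdot\rangle_*$.} Write $K:=k(A_t,A_t)$ and $c(x):=(K+\lambda I_t)^{-1}k(A_t,x)$. Then
\[
k_t(\cdot,x)=k(\cdot,x)-\sum_i c_i(x)\,k(\cdot,x_i),
\]
a finite combination of prior kernel sections, so $k_t(\cdot,x)\in\mathcal H_0$. For $f\in\mathcal H_0$, the prior reproducing property gives
\[
\langle f,k_t(\cdot,x)\rangle_k=f(x)-f(A_t)^\top c(x).
\]
For the data term I use
\[
k_t(A_t,x)=k(A_t,x)-K(K+\lambda I_t)^{-1}k(A_t,x)=\lambda (K+\lambda I_t)^{-1}k(A_t,x)=\lambda c(x),
\]
which follows from $I-K(K+\lambda I)^{-1}=\lambda(K+\lambda I)^{-1}$. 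It yields
\[
\lambda^{-1}\sum_s f(x_s)k_t(x_s,x)=f(A_t)^\top c(x).
\]
Adding the two pieces gives $\langle f,k_t(\cdot,x)\rangle_*=f(x)$. So $k_t$ reproduces $(\mathcal H_0,\langle\cdot,\cdot\rangle_*)$, and by uniqueness of reproducing kernels $k_*=k_t$.

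The only nontrivial point is the matrix identity $k_t(A_t,x)=\lambda c(x)$, which makes the correction terms cancel exactly; everything else is bookkeeping. Finally, the scaled versions $k^\alpha=\alpha^{-1}k$ and $k_t^\alpha$ follow by rescaling, since $\|f\|_{c k}^2=c^{-1}\|f\|_k^2$. This yields the $\alpha\lambda^{-1}$ weighting used in Remark~\ref{remark:diff-standard-regret}.
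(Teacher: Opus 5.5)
Your proof is correct, and it takes a genuinely different route from the paper's.

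\textbf{The paper's route.} It argues spectrally. It takes the Mercer expansion of $k$ and expands each eigenfunction $\psi_j$ of $k_t$ in the prior eigenbasis $\{\phi_j\}$. It then applies an infinite-dimensional Woodbury identity to show that, in these coordinates, $k_t$ acts as $(\mathbf{M}^{-1}+\lambda^{-1}\Phi(A_t)\Phi(A_t)^\top)^{-1}$. Next it asserts that the coefficient matrix $\mathbf{P}$ is orthogonal. Finally it reads off $\|f\|_{k_t}^2=\mathbf{f}^\top(\mathbf{M}^{-1}+\lambda^{-1}\Phi(A_t)\Phi(A_t)^\top)\mathbf{f}$.

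\textbf{Your route.} You build the candidate Hilbert space $(\mathcal{H}_0,\langle\cdot,\cdot\rangle_*)$ directly, check that $k_t$ reproduces it, and invoke uniqueness of reproducing kernels. The only real computation is the finite-dimensional identity $I_t-K(K+\lambda I_t)^{-1}=\lambda(K+\lambda I_t)^{-1}$, which is essentially the finite-dimensional shadow of the paper's Woodbury step.

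\textbf{What your route buys.} It avoids every delicate step of the paper's argument:
\begin{itemize}
\item it needs no continuity, square-integrability, or Mercer theorem;
\item it needs no interchange of sums and integrals ``under sufficient regularity'';
\item it never inverts the unbounded diagonal operator $\mathbf{M}^{-1}$;
\item it needs no completeness of $\{\phi_j\}$ or $\{\psi_j\}$ in $L^2(\mathcal X)$, which the paper's orthogonality claim for $\mathbf{P}$ requires;
\item it needs no justification that an $L^2$ expansion evaluates pointwise to give $\Phi(A_t)^\top\mathbf{f}=f(A_t)$.
\end{itemize}
It also handles repeated query points automatically. It proves the stronger statement $\mathcal{H}_t=\mathcal{H}_0$ as sets with equivalent norms, which sharpens Lemma~\ref{lem:H0inHt} and makes the identity hold for every $f\in\mathcal{H}_0$.

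\textbf{What the paper's route buys.} It gives an explicit picture of the eigenstructure of $k_t$ relative to the prior basis: posterior precision equals prior precision plus a rank-$t$ data term. Making it rigorous, however, would require extra hypotheses that your argument does not need.

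Your closing rescaling remark, $\|f\|_{ck}^2=c^{-1}\|f\|_k^2$, correctly yields the $\alpha\lambda^{-1}$-weighted form that the paper actually uses in \eqref{eq:GB4}.
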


\begin{proof}
Recall  that, $k_t(x,x')= k(x,x')- k(x,A_t)^\top (k(A_t,A_t)+\sigma^{-1} I_d)^{-1} k(A_t,x')$ and $\sigma^{-1}= \lambda$.  Since the kernel $k(s,t)$ is positive definite, continuous and square integrable, using Mercer's theorem (as given in Theorem \ref{thm:Mercer} in Appendix \ref{app:RKHS-Mercer}), there exists an orthonormal sequence of continuous eigenfunctions $\{\phi_j\}_{j=1}^{\infty}$ in $L^2(\mathcal X)$ with eigenvalues $\mu_1\geq \mu_2\ldots \geq 0$ and:
\begin{align}
    \int_{\mathcal X} k(s,t)\phi_j(s)ds= \mu_j \phi_j(t), \quad j={1,2,\ldots} \quad \text{and} \quad k(s,t)= \sum_{j=1}^{\infty} \mu_j \phi_j(s)\phi_j(t).
    \label{eq:rk0}
\end{align} 
The RKHS $\mathcal H_0$ spanned by kernel $k(\cdot,\cdot)$ consists of functions $g\in L^2(\mathcal X)$ satisfying $\sum_{j=1}^{\infty}{g_j^2}/{\mu_j} < \infty$, where $g_j= \langle g, \phi_j\rangle_{L^2(\mathcal X)}$. Moreover, for any $g\in\mathcal H_0$,  $\|g\|_{\mathcal H_0}^2 = \sum_{j=1}^{\infty}{g_j^2}/{\mu_j}$.

Now let us assume that $\{\gamma_i,\psi_i\}_{i=1}^{\infty}$ are the corresponding eigen system of the reproducing kernel $k_t(\cdot,\cdot)$. It follows that:
\begin{align}
    \int_{\mathcal X} k_t(x,x')\psi_j(x)dx= \gamma_j \psi_j(x'), \quad j={1,2,\ldots}
    \label{eq:rk1}
\end{align}

Considering the left hand side above and substituting the expression of $k_t$, it follows that:
\begin{align}
    \int_{\mathcal X} k_t(s,t)\psi_j(s)ds = \int_{\mathcal X} \left[k(x,x')- k(x,A_t)^\top (k(A_t,A_t)+\sigma^{-1} I_d)^{-1} k(A_t,x')\right]\psi_j(s)ds.
    \label{eq:rk2}
\end{align}

Since, $\psi_j \in L^2(\mathcal X)$ and $\{\phi_i\}_{i=1}^{\infty}$ is an orthonormal basis of $L^2(\mathcal X)$, we can express $\psi_j(\cdot) = \sum_{i=1}^{\infty} p^{(j)}_i  \phi_i(\cdot) $. Substituting this expression into~\eqref{eq:rk2} and interchanging integral and summation (assuming sufficient regularity of RKHS), we obtain from using the first expression in~\eqref{eq:rk0} that:
\begin{align}\label{eq:rk3}
\begin{split}
    &\int_{\mathcal X} k_t(x,x')\psi_j(x)dx\\
    &\qquad = \int_{\mathcal X} \left[k(x,x')- k(x,A_t)^\top (k(A_t,A_t)+\sigma^{-1} I_d)^{-1} k(A_t,x')\right]\sum_{i=1}^{\infty} p^{(j)}_i  \phi_i(x)dx
    \\
    &\qquad = \sum_{i=1}^{\infty} p^{(j)}_i \mu_i \phi_i(x') - k(x', A_t) (k(A_t,A_t)+\sigma^{-1} I_d)^{-1} \sum_{i=1}^{\infty} p^{(j)}_i \mu_i \phi_i(A_t)
    \\
    &\qquad = \sum_{i=1}^{\infty} p^{(j)}_i \mu_i \phi_i(x') - k(x', A_t) (k(A_t,A_t)+\sigma^{-1} I_d)^{-1} \sum_{i=1}^{\infty} p^{(j)}_i \mu_i \phi_i(A_t).
\end{split}
\end{align}

Now substituting~\eqref{eq:rk3} into~\eqref{eq:rk1} and taking inner product with $\phi_k(x')$, we have for all $k=1,2, \ldots$:
\begin{align}\label{eq:rk4}
\begin{split}
    &p^{(j)}_k \mu_k  - \mu_k \phi_k(A_t)^\top (k(A_t,A_t)+\sigma^{-1} I_d)^{-1} \sum_{i=1}^{\infty} p^{(j)}_i \mu_i \phi_i(A_t)\\
    &\qquad \qquad \qquad = \gamma_j \langle\psi_j(x'),\phi_k(x')\rangle = \gamma_j p^{(j)}_k.
\end{split}
\end{align}

We use the following matrix notation:
\begin{align}
\nonumber
    \mathbf{M}=\text{diag}(\mu_1,\mu_2,\ldots), \quad
        \mathbf{p}^{(j)}= (p^{j}_1,p^{j}_2,\ldots)^\top, \quad
    \Phi(A_t)= (\phi_1(A_t),\phi_2(A_t),\ldots)^\top.
\end{align}

Using the second expression in~\eqref{eq:rk0}, observe that $k(A_t,A_t)=  \Phi(A_t)^\top \mathbf{M} \Phi(A_t)$. Combined with this and using the above matrix notation,~\eqref{eq:rk4} can be rewritten as:
\begin{align}
    \left(\mathbf{M}  - \mathbf{M} \Phi(A_t)(\Phi(A_t)^\top \mathbf{M} \Phi(A_t)+\sigma^{-1} I_d)^{-1}\Phi(A_t)^\top \mathbf{M} \right) \mathbf{p}^{(j)} = \gamma_j \mathbf{p}^{(j)}.
\end{align}

With an application of Woodbury matrix identity, we have: 
$$\left(\mathbf{M}  - \mathbf{M} \Phi(A_t)(\Phi(A_t)^\top \mathbf{M} \Phi(A_t)+\sigma^{-1} I_d)^{-1}\Phi(A_t)^\top \mathbf{M} \right) = (\mathbf{M}^{-1} + \sigma \Phi(A_t)\Phi(A_t)^\top)^{-1},$$ 
and therefore:
\begin{align}
    \gamma_j \mathbf{p}^{(j)} = (\mathbf{M}^{-1} + \sigma \Phi(A_t)\Phi(A_t)^\top)^{-1} \mathbf{p}^{(j)}, \quad j=1,2,\ldots.
    \label{eq:rk5}
\end{align}

For any $f\in \mathcal H_t \subseteq\mathcal H_0$, observe that:
\begin{align}\label{eq:rk6}
\begin{split}
    \|f\|_{\mathcal H_t}^2 &=\sum_{j=1}^{\infty}  \frac{\langle f,\psi_j\rangle_{L^2(\mathcal X)}^2}{\gamma_j} = \sum_{j=1}^{\infty}  \frac{\langle f,\sum_{i=1}^{\infty} p^{(j)}_i  \phi_i \rangle_{L^2(\mathcal X)}^2}{\gamma_j}\\
    &= \sum_{j=1}^{\infty}  \frac{(\sum_{i=1}^{\infty} p^{(j)}_i f_i )^2}{\gamma_j}  = \sum_{j=1}^{\infty}  \frac{((\mathbf{p}^{(j)})^\top \mathbf{f})^2}{\gamma_j}\\
    & =   \mathbf{f}^\top \mathbf{P}\Gamma^{-1}\mathbf{P}^\top\mathbf{f},
\end{split}
\end{align}
where $\mathbf{f}=(f_1,f_2,\ldots)^\top$, $\mathbf{P}= (\mathbf{p}^{(1)},\mathbf{p}^{(2)},\ldots)^\top$, and $\Gamma= \text{diag}(\gamma_1,\gamma_2,\ldots)$. Using these notations, \eqref{eq:rk5} can be rewritten as, $(\mathbf{M}^{-1} + \sigma \Phi(A_t)\Phi(A_t)^\top)^{-1} \mathbf{P}=  \mathbf{P}\Gamma$. Also note using \eqref{eq:rk5} that, matrix $\mathbf{P}$ is an orthogonal matrix, which implies that $\mathbf{P}\Gamma^{-1}\mathbf{P}^\top = (\mathbf{M}^{-1} + \sigma \Phi(A_t)\Phi(A_t)^\top) $. Substituting the expression for $\mathbf{P}\Gamma^{-1}\mathbf{P}^\top$ in~\eqref{eq:rk6}, it follows that:
\begin{align}
    \|f\|_{\mathcal H_t}^2 = \mathbf{f}^\top (\mathbf{M}^{-1} + \sigma \Phi(A_t)\Phi(A_t)^\top) \mathbf{f} = \|f\|_{\mathcal H_0}^2 + \sigma \mathbf{f}^\top \Phi(A_t)\Phi(A_t)^\top \mathbf{f}.
\end{align}
Hence the assertion of Lemma \ref{lem:identity} follows, since $\sigma=  \lambda^{-1}$ and $\Phi(A_t)^\top \mathbf{f} = \sum_{j=1}^{\infty} f_j \phi_j(A_t) = f(A_t)$ because $\{\phi_j\}_{j=1}^{\infty}$ are orthonormal basis of $L^2(\mathcal X)$, which implies that $f(A_t)^\top f(A_t)= \sum_{s=1}^{t}(f(x_t))^2$.
\end{proof}

\section{Posterior Tail Probability Bound}~\label{app:LB}

\begin{lemma}~\label{lem:LB}
    With $\mathbb{P}_0$-probability of at least $1-\eta_2$ for any $\eta_2\in(0,1)$, we have:
    \begin{align}
        \mathbb{P} (\pmb \theta_t(x_0) > \pmb \theta_0(x_0)\mid \mathcal{F}_{t-1} ) \geq \frac{e^{-\iota_t^2}}{4\sqrt{\pi} \iota_t},
    \end{align}
    for $\iota_t^2 = \frac{(D+2)\alpha C(1-\alpha,\pmb\theta_0,k)}{ (1-\alpha)\eta_2}   t\epsilon_t^2$.
\end{lemma}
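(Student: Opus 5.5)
Conditional on $\mathcal F_{t-1}$, the sample $\pmb\theta_t(x_0)$ is Gaussian with mean $\mu_{t-1}(x_0)$ and variance $\sigma_{t-1}^2(x_0)=\alpha^{-1}k_{t-1}(x_0,x_0)$. Writing $Z$ for a standard normal independent of $\mathcal F_{t-1}$, we get
\[
\mathbb P(\pmb\theta_t(x_0)>\pmb\theta_0(x_0)\mid\mathcal F_{t-1})=\mathbb P\big(Z> z_t\big),\qquad z_t:=\frac{\pmb\theta_0(x_0)-\mu_{t-1}(x_0)}{\sigma_{t-1}(x_0)}.
\]
The proof therefore has two parts: a high-probability upper bound on $|z_t|$ by $\iota_t$, followed by a standard Gaussian anti-concentration (Mills-ratio) lower bound on $\mathbb P(Z>\iota_t)$.

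\textbf{Step 1 (reducing $z_t$ to an RKHS distance).} By the reproducing property of the kernel $k^\alpha_{t-1}$, exactly as in the proof sketch of Theorem~\ref{thm:GRB},
\[
|\mu_{t-1}(x_0)-\pmb\theta_0(x_0)|\le \sigma_{t-1}(x_0)\,\|\mu_{t-1}-\pmb\theta_0\|_{k^\alpha_{t-1}},
\]
so $|z_t|\le \|\mu_{t-1}-\pmb\theta_0\|_{k^\alpha_{t-1}}$. Lemma~\ref{lem:identity} expresses this norm, up to the scaling by $\alpha$, as the prior RKHS norm plus $\lambda^{-1}\sum_s(\cdot)^2$ over the queried points. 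The latter is exactly the combination $\tfrac12\|\pmb\theta-\pmb\theta_0\|_{k^\alpha}^2+D^{(t)}_\alpha(\pmb\theta,\pmb\theta_0)$ controlled by Lemma~\ref{lem:post}, via the identity $2D^{(t)}_\alpha=\alpha\lambda^{-1}\sum_s(\pmb\theta(x_s)-\pmb\theta_0(x_s))^2$ from Remark~\ref{remark:diff-standard-regret}. Since $\mu_{t-1}$ is the posterior mean and the squared norm is convex, Jensen gives
\[
\|\mu_{t-1}-\pmb\theta_0\|^2\le \mathbb E_{\Pi_\alpha}\big[\|\pmb\theta-\pmb\theta_0\|^2\mid\mathcal F_{t-1}\big],
\]
and we then apply \eqref{eq:post-2} at time $t-1$. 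Absorbing the exponentially small term $4C e^{-\alpha t\epsilon_t^2/4}/(1-\alpha)$ into $C\cdot t\epsilon_t^2$ gives
\[
\mathbb E_0\|\mu_{t-1}-\pmb\theta_0\|^2_{k^\alpha_{t-1}}\lesssim \frac{(D+2)\alpha\, C(1-\alpha,\pmb\theta_0,k)}{1-\alpha}\,t\epsilon_t^2 .
\]
The constants have to be tracked so that the result matches $\iota_t^2\eta_2$ exactly. This needs care: $C\ge 1$ for the absorption step, the index shift from $t-1$ to $t$ must be handled using monotonicity of $t\epsilon_t^2$, and the factor $\tfrac12$ from Lemma~\ref{lem:post} must be accounted for.

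\textbf{Step 2 (Markov).} Markov's inequality gives
\[
\mathbb P_0\big(\|\mu_{t-1}-\pmb\theta_0\|^2_{k^\alpha_{t-1}}>\iota_t^2\big)\le \frac{\mathbb E_0\|\cdot\|^2}{\iota_t^2}\le\eta_2 .
\]
Hence with probability at least $1-\eta_2$ we have $z_t\le\iota_t$, and therefore $\mathbb P(Z>z_t)\ge\mathbb P(Z>\iota_t)$.

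\textbf{Step 3 (Gaussian tail lower bound).} Use the standard bound $\mathbb P(Z>u)\ge \frac{1}{\sqrt{2\pi}}\frac{u}{1+u^2}e^{-u^2/2}$, or the Abramowitz--Stegun form
\[
\mathbb P(Z>u)\ge \frac{e^{-u^2/2}}{\sqrt{2\pi}}\cdot\frac{1}{2u}\quad\text{for } u\ge1 .
\]
Since $e^{-u^2/2}\ge e^{-u^2}$ and $\frac{1}{2\sqrt{2\pi}}\ge\frac{1}{4\sqrt\pi}$, this yields $e^{-\iota_t^2}/(4\sqrt\pi\,\iota_t)$. For $\iota_t<1$ the probability is at least $\mathbb P(Z>1)\approx0.159$. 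In that regime the right-hand side should be checked directly, or $\iota_t\ge$ some constant assumed as in Agrawal--Goyal-type arguments. I expect this small-$\iota_t$ edge case and the constant bookkeeping in Step 1 to be the main obstacles; the conceptual content is just reproducing property, then Lemma~\ref{lem:post}, then Markov, then Mills ratio.
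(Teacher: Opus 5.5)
Your argument is the paper's: the reproducing property for $k^\alpha_{t-1}$, then Lemma~\ref{lem:identity} and Jensen to reach \eqref{eq:post-2}, then Markov at level $\iota_t^2$, then a Gaussian tail bound. Your ordering in Steps~2--3 is cleaner than the paper's: you use monotonicity of $z\mapsto\mathbb{P}(Z>z)$ first and apply the tail bound at $\iota_t$ second. The paper instead applies $\mathbb{P}(Z>\beta_t)\ge e^{-\beta_t^2}/(4\sqrt{\pi}\beta_t)$ at $\beta_t=|z_t|$ with no restriction. However, the two items you defer are not bookkeeping. They are genuine gaps, and the paper's proof has them too.

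\emph{Absorption.} Write $s=(t-1)\epsilon_{t-1}^2$; the index shift does need $t\epsilon_t^2$ nondecreasing, which the paper glosses over. Then \eqref{eq:post-2} gives $\mathbb{E}\|\mu_{t-1}-\pmb\theta_0\|^2_{k^\alpha_{t-1}}\le\frac{(D+2)\alpha s+8Ce^{-\alpha s/4}}{1-\alpha}$. Bounding this by $\eta_2\iota_t^2=\frac{(D+2)\alpha C}{1-\alpha}t\epsilon_t^2$ needs roughly $8Ce^{-\alpha s/4}\le(D+2)(C-1)\alpha s$. This does not follow from $C\ge1$: at $C=1$ the right side vanishes. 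In the regime $\alpha t\epsilon_t^2<1$ of Theorem~\ref{thm:GRB}, the condition forces $(D+2)\alpha s>8e^{-1/4}\approx6.2$. So for any fixed $D$ it fails once $\alpha t\epsilon_t^2$ is small; there the exponential term is bounded below by a constant rather than being negligible. The paper simply asserts this step. The clean repair is not to absorb at all: define $\eta_2\iota_t^2:=\frac{(D+2)\alpha s+8Ce^{-\alpha s/4}}{1-\alpha}$, and Markov then gives exactly $\eta_2$.

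\emph{Small $\iota_t$.} This case cannot be ``checked directly''. The claimed bound $e^{-\iota_t^2}/(4\sqrt{\pi}\iota_t)$ blows up as $\iota_t\to0$ and exceeds $1$ for $\iota_t\le0.13$ (it is about $1.4$ at $\iota_t=0.1$). On that range the statement is false, not merely unproved.
\begin{itemize}
\item The inequality $\mathbb{P}(Z>u)\ge e^{-u^2}/(4\sqrt{\pi}u)$ itself fails for $u$ below about $0.34$. This is also what breaks the paper's first display when $\beta_t$ is small.
\item Your fallback $\mathbb{P}(Z>\iota_t)\ge\mathbb{P}(Z>1)\approx0.159$ is true, but it falls below the claimed bound once $\iota_t\lesssim0.6$.
\end{itemize}
So the lemma needs an explicit lower bound on $\iota_t$ as a hypothesis, e.g.\ $\iota_t\ge1$ as in Agrawal--Goyal's anti-concentration lemma. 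Under that hypothesis your Step~3 is complete. Be aware that the proof of Theorem~\ref{thm:GRB} relies on $\iota_t<1$ to make $\overline{p}(t)$ positive, so this restriction is not harmless downstream.
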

\begin{proof}    

\begin{align}
\begin{split}
\mathbb{P} (\pmb \theta_t(x_0) > \pmb \theta_0(x_0)  \mid \mathcal{F}_{t-1} ) &= \mathbb{P} \left(\frac{\pmb \theta_t(x_0) - \mu_{t-1}(x_0)}{\sigma_{t-1}(x_0)} > \frac{\pmb \theta_0(x_0)- \mu_{t-1}(x_0)}{\sigma_{t-1}(x_0)}\;\Big|\;\mathcal{F}_{t-1} \right)
    \\
    &\geq \frac{e^{-\beta_t^2}}{4\sqrt{\pi} \beta_t},
\end{split}
\end{align}
where $\beta_t=\frac{|\pmb \theta_0(x_0)- \mu_{t-1}(x_0)|}{\sigma_{t-1}(x_0)}$. 

Since, $\mu_{t-1}(\cdot) \in \mathcal{H}_{t-1}$, $\pmb \theta_0(\cdot)\in \mathcal{H}_{0}$, and $\mathcal{H}_{t-1}\subseteq \mathcal{H}_0$ (using Lemma \ref{lem:H0inHt}), the evaluation property of kernel $k_{t-1}^{\alpha}$ (see Appendix \ref{app:RKHS-Mercer}) implies:
\begin{align}\label{eq:GD1}
\begin{split}
    | \mu_{t-1}(x_0) -   \pmb \theta_0(x_0)| &= \langle ( \mu_{t-1}-\pmb \theta_0),k_{t-1}^{\alpha}(x_0,\cdot) \rangle_{k_{t-1}^{\alpha}} \\
      &\leq  \|\mu_{t-1}-\pmb \theta_0\|_{k_{t-1}^{\alpha}}  \|k_{t-1}^{\alpha}(x_0,\cdot)\|_{k_{t-1}^{\alpha}}
     \\
     &=\|\mu_{t-1}-\pmb \theta_0\|_{k_{t-1}^{\alpha}} \sqrt{k_{t-1}^{\alpha}(x_0,x
     _0)} 
     \\
     &=\|\mu_{t-1}-\pmb \theta_0\|_{k_{t-1}^{\alpha}} \sigma_{t-1}(x_0),
\end{split}
\end{align}
where the last inequality above is due to Cauchy-Schwarz inequality. It follows from Lemma~\ref{lem:identity} and the second result given by \eqref{eq:post-2} in Lemma~\ref{lem:post} that:
\begin{align}
\nonumber
\begin{split}
    \mathbb{E}\left[\frac{ |\mu_{t-1}(x_0) -   \pmb \theta_0(x_0)|^2}{\sigma_{t-1}^2(x_0)}\right] &\leq \mathbb{E} \left[\|\mu_{t-1}-\pmb \theta_0 \|_{k_{t-1}^{\alpha}}^2 \right]\\ 
    &=  \mathbb{E} \left[\|\mu_{t-1}-\pmb \theta_0\|_{k}^2 +\alpha\lambda^{-1} \sum_{s=1}^{t-1}(\mu_{t-1}(x_s)-\pmb \theta_0(x_s))^2  \right]
    \\
    &\leq 2 \mathbb{E} \left[\frac{1}{2} \|\pmb \theta_t -\pmb \theta_0\|_{k^\alpha}^2 +  \lambda^{-1} D_{\alpha}^{t-1}(\pmb \theta_t,\pmb \theta_0)  \right]
    \\
    &\leq 2  \left[\frac{(D+2)\alpha}{(1-\alpha)2} t\epsilon_t^2 + \frac{4C(1-\alpha,\pmb\theta_0,k) e^{ - \frac{\alpha}{4}  t\epsilon_t^2 } }{(1-\alpha)} \right]
    \\
    &\leq  \frac{(D+2)\alpha C(1-\alpha,\pmb\theta_0,k)}{ (1-\alpha)}    t\epsilon_t^2
,
\end{split}
\end{align}
where the first inequality above uses Jensen's inequality and the definition of the R\'enyi divergence between two multivariate Gaussian distributions. Now the assertion of Lemma \ref{lem:LB} follows using Markov's inequality.
\end{proof}

\section{Proof of Theorem \ref{thm:GRB}}\label{app:thm-GRB}
\begin{proof}
    Let $\overline{x}_t:= \arg\min_{x \notin \mathcal{C}_t} \sigma_{t-1}^2(x) $ and observe that:
    \begin{align}\label{eq:GB0}
        &\pmb \theta_0(x_0) -   \pmb \theta_0(x_t) = \left( \pmb \theta_0(x_0) -    \pmb \theta_0(\overline{x}_t) \right) + \left( \pmb \theta_0(\overline{x}_t) -   \pmb \theta_t(x_t) \right) + \left( \pmb \theta_t(x_t) -   \pmb \theta_0(x_t)\right) .
    \end{align}
    Using the definition of the unsaturated actions in Definition~\ref{def:US}, observe that the first term in~\eqref{eq:GB0} is bounded as:
    \begin{align}\label{eq:GB1}
    \begin{split}
        \sum_{t=1}^{T} \mathbb{E}\left[ \pmb \theta_0(x_0) -   \pmb \theta_0(\overline{x}_t) \right] &\leq  \sum_{t=1}^{T} {\mathcal{C}_t}\mathbb{E}[\sigma_{t-1}(\overline{x}_t)] \leq \sum_{t=1}^{T} \mathcal{C}_t \mathbb{E}[\sigma_{t-1}^2(\overline{x}_t)]^{\frac{1}{2}} \\
        &\leq \left(\sum_{t=1}^{T} \mathcal{C}_t^2 \right)^{\frac{1}{2}} \left(\sum_{t=1}^{T} \mathbb{E}[\sigma_{t-1}^2(\overline{x}_t)] \right)^{\frac{1}{2}},
    \end{split}
    \end{align} 
    where the last inequality follows from the Cauchy-Schwartz inequality. For the third summand in~\eqref{eq:GB0}, we add and subtract $\mu_{t-1}(x_t)$ to obtain
    \(
      \pmb \theta_t(x_t) -   \pmb \theta_0(x_t) = 
      \pmb \theta_t(x_t) - \mu_{t-1}(x_t)+ \mu_{t-1}(x_t)-  \pmb \theta_0(x_t).\)
    Since, $\mu_{t-1}(\cdot) \in \mathcal{H}_{t-1}$, $\pmb \theta_0(\cdot)\in \mathcal{H}_{0}$, and $\mathcal{H}_{t-1}\subseteq \mathcal{H}_0$ from Lemma \ref{lem:H0inHt}, the evaluation property of kernel $k_{t-1}^{\alpha}(\cdot, \cdot)$ (see Appendix \ref{app:RKHS-Mercer}) implies:
    \begin{align}\label{eq:GB1.5}
    \begin{split}
        \mu_{t-1}(x_t) -   \pmb \theta_0(x_t) &= \langle (\mu_{t-1}-\pmb \theta_0), k_{t-1}^{\alpha}(x_t,\cdot) \rangle_{k_{t-1}^{\alpha}}\\
        &\leq  \|\mu_{t-1}-\pmb \theta_0\|_{k_{t-1}^{\alpha}}  \| k_{t-1}^{\alpha}(x_t,\cdot)\|_{k_{t-1}^{\alpha}}\\
        &=\|\mu_{t-1}-\pmb \theta_0\|_{k_{t-1}^{\alpha}} \sqrt{k_{t-1}^{\alpha}(x_t,x
     _t)},
    \end{split}
\end{align}
where the second inequality above, once again follows from Cauchy-Schwarz inequality. Since \(\sum_{t=1}^{T} \mathbb{E}\left[\pmb \theta_t(x_t) - \mu_{t-1}(x_t)\right]= 0\) (by definition), it follows using the observation in~\eqref{eq:GB1.5} above that:
    \begin{align}\label{eq:GB2}
    \begin{split}
        \sum_{t=1}^T \mathbb{E} \left[\pmb \theta_t(x_t) -   \pmb \theta_0(x_t)\right] & \leq  \sum_{t=1}^T \mathbb{E} \left[  \left|\mu_{t-1}(x_t) -   \pmb \theta_0(x_t)\right|  \right]\\ 
       & \leq  \sum_{t=1}^T \mathbb{E} \left[ \sigma_{t-1}(x_t) \|\mu_{t-1} -\pmb \theta_0 \|_{k_{t-1}^{\alpha}} \right] 
       \\
       &\leq  \mathbb{E} \left[\left( \sum_{t=1}^T \sigma_{t-1}^2(x_t) \right)^{\frac{1}{2}} \left(  \sum_{t=1}^T \|\mu_{t-1} -\pmb \theta_0 \|_{k_{t-1}^{\alpha}}^2 \right)^{\frac{1}{2}}\right] 
       \\
       &\leq   \left( \mathbb{E} \left[ \sum_{t=1}^T \sigma_{t-1}^2(x_t)\right]  \right)^{\frac{1}{2}} \left(   \sum_{t=1}^T  \mathbb{E} \left[\| \mu_{t-1} -\pmb \theta_0 \|_{k_{t-1}^{\alpha}}^2 \right] \right)^{\frac{1}{2}},
    \end{split}
    \end{align}
where the third and fourth inequality above are due to Cauchy-Schwarz inequality. 
Combined with the fact that, $\pmb \theta_t(x_t) > \pmb \theta_t(\overline{x}_t)$ (by definition of $x_t$), the second term in~\eqref{eq:GB0} can be bounded in a similar way to obtain:
    \begin{align}
        \sum_{t=1}^{T} \mathbb{E}\left[ \pmb \theta_0(\overline{x}_t) -   \pmb \theta_t(x_t) \right] \leq   \left( \mathbb{E} \left[ \sum_{t=1}^T \sigma_{t-1}^2(\overline{x}_t)\right]  \right)^{\frac{1}{2}} \left(   \sum_{t=1}^T  \mathbb{E} \left[\|\mu_{t-1} -\pmb \theta_0 \|_{k_{t-1}^{\alpha}}^2 \right] \right)^{\frac{1}{2}}.
        \label{eq:GB3}
    \end{align}
It follows from Lemma~\ref{lem:identity} and the second result in Lemma~\ref{lem:post} that:
\begin{align}\label{eq:GB4}
\begin{split}
    \sum_{t=1}^T  \mathbb{E} \left[\|\mu_{t-1}-\pmb \theta_0 \|_{k_{t-1}^{\alpha}}^2 \right] 
    &= \sum_{t=1}^T  \mathbb{E} \left[\|\mu_{t-1}-\pmb \theta_0\|_{k^\alpha}^2 +\alpha\lambda^{-1} \sum_{s=1}^{t-1}(\mu_{t-1}(x_s)-\pmb \theta_0(x_s))^2  \right]\\
    &\leq 2\sum_{t=1}^T  \mathbb{E} \left[\frac{1}{2} \|\pmb \theta_t -\pmb \theta_0\|_{k^\alpha}^2 +  \lambda^{-1} D_{\alpha}^{t-1}(\pmb \theta_t,\pmb \theta_0)  \right]\\
    &\leq 2 \sum_{t=1}^T  \left[\frac{(D+2)\alpha}{2(1-\alpha)} t\epsilon_t^2 + \frac{4C(1-\alpha,\pmb\theta_0,k) e^{ - \frac{\alpha}{4}  t\epsilon_t^2 } }{(1-\alpha)} \right]\\
    &\leq  \frac{(D+2)\alpha C(1-\alpha,\pmb\theta_0,k)}{ (1-\alpha)}  \sum_{t=1}^T  t\epsilon_t^2,
\end{split}
\end{align}
where the first inequality above follows from Jensen's inequality and the definition of R\'enyi divergence between two multivariate Gaussian distributions. 

The only term that remains to be analyzed now is $\mathbb{E} \left[ \sigma_{t-1}^2(\overline{x}_t)\right]$. Observe that:
\begin{align}\label{eq:GB5}
\begin{split}
    \mathbb{E} \left[ \sigma_{t-1}^2(x_t)\mid \mathcal{F}_{t-1}\right] &\geq  \mathbb{E} \left[ \sigma_{t-1}^2(x_t)\mid \mathcal{F}_{t-1}, x_t \notin \mathcal{C}_t\right] P(x_t \notin \mathcal{C}_t\mid \mathcal{F}_{t-1} )\\
    &\geq \sigma_{t-1}^2(\overline{x}_t) P(x_t \notin \mathcal{C}_t \mid \mathcal{F}_{t-1} ),
\end{split}
\end{align}
where the second inequality above uses the definition of $\overline{x}_t$ and the fact that it is completely determined by $\mathcal{F}_{t-1}$.
Note that, if we can establish a lower bound on  $P(x_t \notin \mathcal{C}_t \mid \mathcal{F}_{t-1} )$, then we can upper bound $\mathbb{E} \left[ \sigma_{t-1}^2(\overline{x}_t)\right]$ by $\mathbb{E} \left[ \sigma_{t-1}^2(x_t)\right]$ up to some time dependent term. 
Since $x_0$ is always unsaturated, therefore if, $\pmb \theta_t(x_0) > \pmb \theta_t(x)$ for all saturated actions, i.e., $ \forall x\in \mathcal{C}_t$, then one of the unsaturated actions must be played. Consequently:
\begin{align}\label{eq:eqLB2}
\begin{split}
    &\mathbb{P}(x_t \notin \mathcal{C}_t\mid \mathcal{F}_{t-1} ) \geq \mathbb{P}( \pmb \theta_t(x_0) > \pmb \theta_t(x), \forall x\in \mathcal{C}_t \mid \mathcal{F}_{t-1} )\\
    &\qquad \geq  \mathbb{P}( \{ \forall x\in \mathcal{C}_t: \pmb \theta_t(x_0) > \pmb \theta_t(x) \} , \{\forall x\in \mathcal X: \pmb \theta_0(x) \geq \pmb \theta_t(x) -  2 \mathcal{C}_t \sigma_{t-1}(x)\} \mid \mathcal{F}_{t-1} ) 
    \\
    &\qquad \geq  \mathbb{P}( \{  \pmb \theta_t(x_0) > \pmb \theta_0(x_0)   \}  , \{\forall x\in \mathcal X: \pmb \theta_0(x) \geq \pmb \theta_t(x) - 2 \mathcal{C}_t \sigma_{t-1}(x)\} \mid \mathcal{F}_{t-1} )
    \\
    &\qquad \geq  \mathbb{P}\left( \{  \pmb \theta_t(x_0) > \pmb \theta_0(x_0)   \} , \{\sup_{x\in\mathcal X} \frac{\pmb\theta_t(x)-\pmb \theta_0(x)}{\sigma_{t-1}(x)}   \leq 2 \mathcal{C}_t \} \mid \mathcal{F}_{t-1} \right)\\
    &\qquad \geq \mathbb{P} \left(\pmb \theta_t(x_0) > \pmb \theta_0(x_0)  | \mathcal{F}_{t-1} \right)  -  \mathbb{P}\left(\sup_{x\in\mathcal X} \frac{\pmb\theta_t(x)-\pmb \theta_0(x)}{\sigma_{t-1}(x)}   \geq 2 \mathcal{C}_t  \mid \mathcal{F}_{t-1} \right),    
\end{split}
\end{align}
where the third inequality above uses the definition of saturated arms and the last inequality above uses union bound. 

Now using standard supremum inequality, i.e., $\sup(f+g) \le \sup f +\sup g $ and the result in~\eqref{eq:GB1.5} observe that,
\( \sup_{x\in\mathcal X} \frac{\pmb\theta_t(x)-\pmb \theta_0(x)}{\sigma_{t-1}(x)} \le  \sup_{x\in\mathcal X} \frac{\pmb\theta_t(x)-\pmb \mu_{t-1}(x)}{\sigma_{t-1}(x)} + \|\mu_{t-1}-\pmb \theta_0\|_{k_{t-1}^{\alpha}}. \) Also, after application of Jensen's and then Markov's inequalities, it follows from \eqref{eq:post-2} in Lemma~\ref{lem:post} 
that, $\mathbb{P}_0(\|\mu_{t-1}-\pmb \theta_0\|_{k_{t-1}^{\alpha}} \geq \eta_1 \mathcal{C}_t ) \leq \frac{1}{\eta_1^2}$ for any $\eta_1\geq 1$.  This implies:
\begin{align}
\mathbb{P}\left(\sup_{x\in\mathcal X} \frac{\pmb\theta_t(x)-\pmb \theta_0(x)}{\sigma_{t-1}(x)}   \geq 2 \mathcal{C}_t\;\Big|\;\mathcal{F}_{t-1} \right) \leq \mathbb{P}\left( \sup_{x\in\mathcal X} \frac{\pmb\theta_t(x)-\mu_{t-1}(x)}{\sigma_{t-1}(x)}   \geq (2-\eta_1) \mathcal{C}_t\;\Big|\;\mathcal{F}_{t-1} \right),
\label{eq:UB}
\end{align}
with $\mathbb{P}_0$-probability of at least $1-\eta_1^{-2}$.
Since, $\frac{\pmb\theta_t(x)-\mu_{t-1}(x)}{\sigma_{t-1}(x)}$ for any $x$ is a standard Gaussian random variable given $\mathcal{F}_{t-1}$, it follows using Gaussian tail inequality on the right hand side of~\eqref{eq:UB} that:
\begin{align}
\mathbb{P}\left(\sup_{x\in\mathcal X} \frac{\pmb\theta_t(x)-\pmb \theta_0(x)}{\sigma_{t-1}(x)}   \geq 2 \mathcal{C}_t\;\Big|\;\mathcal{F}_{t-1} \right) \leq e^{-\frac{(2-\eta_1)^2 \mathcal{C}_t^2}{2}},
\end{align}
with $\mathbb{P}_0$-probability of at least $1-\eta_1^{-2}$.
Combined with the observation in Lemma~\ref{lem:LB}, it follows that:
\begin{align}
    \mathbb{P}(  x_t \notin \mathcal{C}_t \mid \mathcal{F}_{t-1}  ) \geq \frac{e^{-\iota_t^2}}{4\sqrt{\pi} \iota_t}
     - e^{ - ((2-\eta_1)^2)\frac{D\alpha}{\lambda(1-\alpha)}  t\epsilon_t^2 }:=\overline{p}(t),
     \label{eq:UB1}
\end{align}
with $\mathbb{P}_0$-probability of at least $1-\eta_1^{-2}-\eta_2^{-1}$
    for $\iota_t^2 = \frac{(D+2)\alpha C(1-\alpha,\pmb\theta_0,k)\eta_2 }{ (1-\alpha)}   t\epsilon_t^2$.
It would suffice to assume $\alpha t\epsilon_t^2 <1$ to lower bound $\bar p(t)$ because the function $e^{-x^2}(x^{-1}-1)>0$ when $x<1$.
Now choosing a sufficiently large $t=t_1$ and $D$, we can ensure that the lower bound $\overline{p}(t_1) = \overline{p}_0$ is positive. 
Now denoting the above high probability event $\{\mathbb{P}(  x_t \notin \mathcal{C}_t \mid \mathcal{F}_{t-1}  ) \geq \overline{p}(t_1)\}$ as $\mathbf{E}$, observe that:
\[\mathbb{E}\left[\mathbb{E} \left[ \sigma_{t-1}^2(x_t)\mid\mathcal{F}_{t-1}\right]  \right] \geq \mathbb{E}\left[\mathbb{E} \left[ \sigma_{t-1}^2(x_t)\mid\mathcal{F}_{t-1}\right] \mid\mathbf{E} \right] ]\mathbb{P}_0(\mathbf{E}) \geq \mathbb{E}\left[\sigma_{t-1}^2(\overline{x}_t)\right] \overline{p}(t_1) \mathbb{P}_0(\mathbf{E}). \]
Therefore, we have:
\begin{align}
\mathbb{E} \left[\sum_{t=1}^{T} \sigma_{t-1}^2(\overline{x}_t)\right] \leq [\overline{p}_0(1-\eta_1^{-2}-\eta_2^{-1})]^{-1}\mathbb{E} \left[ \sum_{t=1}^{T} \sigma_{t-1}^2(x_t)\right]. 
\label{eq:LB}
\end{align}

Now, putting the pieces together from \eqref{eq:GB0}, \eqref{eq:GB1}, \eqref{eq:GB2}, \eqref{eq:GB3}, \eqref{eq:GB4}, \eqref{eq:GB5}, and \eqref{eq:LB}, we have:
\begin{align}
\begin{split}
    &\mathbb{E}[\mathrm{Regret}(T)]\\
    &\quad\leq  \left(\sum_{t=1}^{T} \mathcal{C}_t^2 \right)^{\frac{1}{2}} \left(\sum_{t=1}^{T} \mathbb{E}[\sigma_{t-1}^2(\overline{x}_t)] \right)^{\frac{1}{2}}\\
    &\quad + \left( \mathbb{E} \left[ \sum_{t=1}^T \sigma_{t-1}^2(\overline{x}_t)\right]  \right)^{\frac{1}{2}} \left(  \frac{(D+2)\alpha C(1-\alpha,\pmb\theta_0,k)}{ (1-\alpha)}  \sum_{t=1}^T  t\epsilon_t^2\right)^{\frac{1}{2}}\\
    &\quad + \left( \mathbb{E} \left[ \sum_{t=1}^T \sigma_{t-1}^2( x_t)\right]  \right)^{\frac{1}{2}} \left(  \frac{(D+2)\alpha C(1-\alpha,\pmb\theta_0,k)}{ (1-\alpha)}  \sum_{t=1}^T  t\epsilon_t^2\right)^{\frac{1}{2}}\\
    &\quad \leq \left(\mathbb{E}\left[\sum_{t=1}^{T}\sigma_{t-1}^{2}(x_t)\right]\right)^{\frac{1}{2}}\bigg\{[\overline{p}_0(1-\eta_1^{-2}-\eta_2^{-1})]^{-1}\bigg[\left(\sum_{t=1}^{T}\mathcal{C}_t^2\right)^{\frac{1}{2}}\\
    &\quad + \left(\frac{(D+2)\alpha C(1-\alpha, \lambda, \pmb{\theta}_0)}{(1-\alpha)}\sum_{t=1}^{T}t\epsilon_t^2\right)^{\frac{1}{2}} \bigg] + \left(\frac{(D+2)\alpha C(1-\alpha, \lambda, \pmb{\theta}_0)}{(1-\alpha)}\sum_{t=1}^{T}t\epsilon_t^2\right)^{\frac{1}{2}}\bigg\}.\\
\end{split} 
\end{align}
Substituting $\mathcal{C}_t^2=\frac{D\alpha}{(1-\alpha)} t\epsilon_t^2$, we have:
\begin{align}\label{eq:regret-bound-final}
\begin{split}
    \mathbb{E}[\mathrm{Regret}(T)] &\leq \left\{[\overline{p}_0(1-\eta_1^{-2}-\eta_2^{-1})]^{-\frac{1}{2}}\left[\left(\frac{D}{(1-\alpha)}\right)^{\frac{1}{2}} + \left(\frac{(D+2) C(1-\alpha, \lambda, \pmb{\theta}_0)}{(1-\alpha)}\right)^{\frac{1}{2}}\right]\right.\\
    &\left.+ \left(\frac{(D+2) C(1-\alpha, \lambda, \pmb{\theta}_0)}{(1-\alpha)}\right)^{\frac{1}{2}}\right\} \left( \mathbb{E} \left[ \sum_{t=1}^T \alpha \sigma_{t-1}^2(x_t)\right]  \right)^{\frac{1}{2}} \left(\sum_{t=1}^T  t\epsilon_t^2 \right)^{\frac{1}{2}}.\\
\end{split}
\end{align}
Now using the definition of $\sigma^2_{t-1}(\cdot)$, the result in \eqref{eq:regret-bound-final-alpha-theorem} follows.
\end{proof}

\section{Proof of Lemma \ref{lem:post}
}\label{app:Post}
\begin{proof}
    Let us first define a set for any $D>0$, $\lambda>0$, $j>0$, $\alpha\in(0,1)$, and given $A_t$ as,
 \(   F_{t,\epsilon_t} : = \left\{\pmb \theta \in \Theta: \frac{1}{2} \|\pmb \theta -\pmb \theta_0\|_k^2 + D_{\alpha}^{(t)}(\pmb \theta,\pmb \theta_0) \geq \frac{(D+j)\alpha}{(1-\alpha)2} t\epsilon_t^2  \right\}.\)
Note that, $F_{t,\epsilon_t}$ is adapted to $\mathcal{F}_t$ because the definition of $D_{\alpha}^{(t)}(\pmb \theta,\pmb \theta_0)$ requires knowledge of $A_t$. The $\alpha$-posterior measure of the set $F_{t,\epsilon_t}$ can be defined as:
\begin{align}
\Pi_{t,\alpha}(F_{t,\epsilon_t} \mid \mathcal{F}_t)&= \frac{ \int_{F_{t,\epsilon_t}}  \prod_{s=1}^{t} [p_{\pmb \theta}(y_s \mid x_s)]^{\alpha}\Pi(d\pmb \theta) } { \int_\Theta  \prod_{s=1}^{t} [p_{\pmb \theta}(y_s \mid x_s)]^{\alpha} \Pi(d\pmb \theta)  }
    = \frac{\int_{F_{t,\epsilon_t}} \Pi(d\pmb \theta) e^{-\alpha \ell_{t}(\pmb \theta,\pmb \theta_0)} } {\int_{\Theta} \Pi(d\pmb \theta) e^{-\alpha \ell_{t}(\pmb \theta,\pmb \theta_0)} },
    \label{eq:MABDE3}
\end{align}
where $\ell_{t}(\pmb \theta,\pmb \theta_0)= \log \frac{\prod_{s=1}^{t} p_0(y_s \mid x_s)}{\prod_{s=1}^{t} p_{\pmb \theta}(y_s \mid x_s) }$.
Now define a set:
$$
S_{t,\epsilon_t} = \left\{ \mathcal{D}_t : \int_{\Theta} \tilde \Pi(d\pmb \theta) e^{-\alpha \ell_{t}(\pmb \theta,\pmb \theta_0)} \leq   e^{-\frac{(D+j)\alpha}{4} t\epsilon_t^2}  \right\},
$$
where for any $B\subseteq \pmb \theta$, $\tilde \Pi(B) = \frac{\Pi(B\cap \pmb \theta)}{\Pi(B\left(\pmb \theta_0,\epsilon_t \right))}$ and $B\left(\pmb \theta_0,\epsilon_t \right)$ is as defined in Assumption~\ref{ass:prior}. 
Now observe that:
\begin{align}
\mathbb{E} [\Pi_{t,\alpha}(F_{t,\epsilon_t} \mid \mathcal{F}_{t}) |A_t ] =  \mathbb{P}_0^{(t)} (S_{t,\epsilon_t}  |A_t) + \mathbb{E} \left[  \frac{\int_{F_{t,\epsilon_t}} \Pi(d\pmb \theta) e^{-\alpha \ell_{t}(\pmb \theta,\pmb \theta_0)} } {\int_{\Theta} \Pi(d\pmb \theta) e^{-\alpha \ell_{t}(\pmb \theta,\pmb \theta_0)} } \text{I}(S_{t,\epsilon_t}^c)   |A_t \right],
\label{eq:MABDE4}
\end{align}
First, let us analyze the second term in~\eqref{eq:MABDE4} above. Note that on the set $S_{t,\epsilon_t}^c$:
$$
\int_{\Theta} \Pi(d\pmb \theta) e^{-\alpha \ell_{t}(\pmb \theta,\pmb \theta_0)} \geq \Pi(B\left(\pmb \theta_0,\epsilon_t \right))  e^{-\frac{(D+j)\alpha}{4} t \epsilon_t^2}.
$$
Therefore, using  Assumption~\ref{ass:prior}, it follows that:
\begin{align}\label{eq:MABDE5}
\begin{split}
\mathbb{E} \left[  \frac{\int_{F_{t,\epsilon_t}} \Pi(d\pmb \theta) e^{-\alpha \ell_{t}(\pmb \theta,\pmb \theta_0)} } {\int_{\Theta} \Pi(d\pmb \theta) e^{-\alpha \ell_{t}(\pmb \theta,\pmb \theta_0)}  } \text{I}(S_{t,\epsilon_t}^c) \Big | A_t   \right] &\leq e^{\frac{(D+j)\alpha}{4} t \epsilon_t^2}
 \mathbb{E} \left[  \frac{{\int_{F_{t,\epsilon_t}} \Pi(d\pmb \theta) e^{-\alpha \ell_{t}(\pmb \theta,\pmb \theta_0)}}}{{\Pi(B\left(\pmb \theta_0,\epsilon_t \right)) }} \Big | A_t \right] 
 \\
 &\leq e^{(\frac{j\alpha}{4} + \frac{D\alpha}{2}) t \epsilon_t^2  }
 \mathbb{E} \left[  {\int_{F_{t,\epsilon_t}} \Pi(d\pmb \theta) e^{-\alpha \ell_{t}(\pmb \theta,\pmb \theta_0)}}  \Big | A_t  \right].
\end{split}
\end{align}
Now, it follows from the definition of $\alpha$-R\'enyi divergence that:
$$
\mathbb{E}\left[e^{-\alpha \ell_{t}(\pmb \theta,\pmb \theta_0)} \mid A_t\right]= 
e^{-(1-\alpha) D_{\alpha}^{(t)}(\pmb \theta,\pmb \theta_0)}.
$$
Hence, using Fubini's theorem and the observation above, it follows that:
\begin{align}\label{eq:MABDE6}
\nonumber
\mathbb{E}\left[\int_{F_{t,\epsilon_t}} \Pi(d\pmb \theta) e^{-\alpha \ell_{t}(\pmb \theta,\pmb \theta_0)} \;\Big|\;A_t\right] &=\int_{F_{t,\epsilon_t}} \Pi(d\pmb \theta) e^{-(1-\alpha)  D_{\alpha}^{(t)}(\pmb \theta,\pmb \theta_0)}
     \\
\nonumber
&\quad \leq \int_{F_{t,\epsilon_t}} \Pi(d\pmb \theta) e^{-(1-\alpha)  \left(\frac{(D+j)\alpha}{(1-\alpha)2} t\epsilon_t^2  - \frac{1}{2} \|\pmb \theta-\pmb \theta_0\|_{k^\alpha}^2 \right)}
\\
&\quad \leq 
     e^{- \frac{(D+j) \alpha}{2}  t\epsilon_t^2 } \int_{F_{t,\epsilon_t}} \Pi(d\pmb \theta) e^{\frac{(1-\alpha) }{2} \|\pmb \theta-\pmb \theta_0\|_{k^\alpha}^2}, 
\end{align}
where the penultimate inequality is due to the definition of  $F_{t,\epsilon_t}$.
Substituting~\eqref{eq:MABDE6} into~\eqref{eq:MABDE5} yields:
\begin{align}
\mathbb{E} \left[  \frac{\int_{F_{t,\epsilon_t}} \Pi(d\pmb \theta) e^{-\alpha \ell_{t}(\pmb \theta,\pmb \theta_0)}} {\int_{\Theta} \Pi(d\pmb \theta) e^{-\alpha \ell_{t}(\pmb \theta,\pmb \theta_0)} } \text{I}(S_{t,\epsilon_t}^c)   \Big | A_t \right] 
&\leq e^{ -\frac{j \alpha }{4} t\epsilon_t^2 } \int_{F_{t, \epsilon_t}} \Pi(d\pmb \theta) e^{\frac{(1-\alpha) }{2} \|\pmb \theta-\pmb \theta_0\|_{k^\alpha}^2}.
\label{eq:MABDE7}
\end{align}
Next, we analyze the first term in~\eqref{eq:MABDE4} above.  It follows from Markov's inequality that:
\begin{align}\label{eq:MABDE8}
\begin{split}
&\mathbb{P}_0^{(t)} \left( \left[ \int_{\Theta} \tilde\Pi(d\pmb \theta) e^{-\alpha \ell_{t}(\pmb \theta,\pmb \theta_0)}\right]^{-\frac{1}{\alpha}} \geq e^{ \frac{D+j}{4} t\epsilon_t^2 } \Big | A_t \right)\\
&\quad \leq  {e^{ - \frac{D+j}{4} t\epsilon_t^2 } }\mathbb{E} \left( \left[ \int_{\Theta} \tilde \Pi(d\pmb \theta) e^{-\alpha \ell_{t}(\pmb \theta,\pmb \theta_0)} \right]^{-\frac{1}{\alpha}} \Big | A_t  \right)
\\
&\quad \leq  {e^{ -  \frac{D+j}{4}  t\epsilon_t^2 } }  \int_{\Theta} \tilde \Pi(d\pmb \theta) \mathbb{E} \left( e^{\ell_{t}(\pmb \theta,\pmb \theta_0)}\;\Big|\;A_t \right) 
\\
&\quad =   {e^{ -  \frac{D+j}{4}  t\epsilon_t^2 } } \mathbb{E}\left[ \int_{\Theta} \tilde \Pi(d\pmb \theta) e^{ D_2^{(t)} \left( \pmb \theta_0, \pmb \theta \right)} \right]
\\
&\quad \leq   {e^{ - \frac{(D+j)\alpha }{4}  t\epsilon_t^2 } e^{ \frac{D \alpha }{4} t\epsilon_t^2 } }
=  e^{ - \frac{j\alpha}{4}  t\epsilon_t^2 },
\end{split}
\end{align}
where the second inequality is due to Jensen's inequality and Fubini's theorem. The penultimate inequality above uses the definition of the set $B\left(\pmb \theta_0,\epsilon_t \right)$ and the fact that $\alpha\in(0,1)$.
Combining~\eqref{eq:MABDE7} and~\eqref{eq:MABDE8}, it follows from~\eqref{eq:MABDE4}  that for all $j>0$ and $\alpha\in (0,1)$:
\begin{align}\label{eq:MABDE9}
\begin{split}
&\mathbb{E}\left[\Pi_{t,\alpha}\left( \frac{1}{2}\|\pmb \theta-\pmb \theta_0\|_{k^\alpha}^2 + D_{\alpha}^{(t)}(\pmb \theta,\pmb \theta_0) \geq \frac{(D+j)\alpha}{(1-\alpha)2} t\epsilon_t^2 \Big|\;\mathcal{F}_{t} \right)\Big | A_t \right]\\  
&\qquad = \mathbb{E} [\Pi_{t,\alpha}(F_{t,\epsilon_t} \mid \mathcal{F}_{t}) | A_t ]\\
&\qquad \leq 2 e^{ - \frac{j\alpha}{4}  t\epsilon_t^2 }  \int_{F_{t, \epsilon_t}} \Pi(d\pmb \theta) e^{\frac{(1-\alpha) }{2} \|\pmb \theta-\pmb \theta_0\|_{k^\alpha}^2}.
\end{split}
\end{align}
Now the first assertion in \eqref{eq:post-1} follows by using the observation that:
\begin{align}\label{eq:MABDE9a}
\begin{split}
    \int_{F_{t, \epsilon_t}} \Pi(d\pmb \theta) e^{\frac{(1-\alpha) }{2} \|\pmb \theta-\pmb \theta_0\|_{k^\alpha}^2} &\leq \int_{F_{t, \epsilon_t}} \Pi(d\pmb \theta) e^{\frac{(1-\alpha)\alpha }{2} \|\pmb \theta-\pmb \theta_0\|_{k}^2}\\
    &\leq \int_{F_{t, \epsilon_t}} \Pi(d\pmb \theta) e^{{ (1-\alpha)} (\|\pmb \theta\|_k^2+\|\pmb \theta_0\|_k^2)}\\
    &= e^{{ (1-\alpha)} \|\pmb \theta_0\|_k^2} \int \Pi(d\pmb \theta)e^{{ (1-\alpha)} \|\pmb \theta\|_k^2}:= C(1-\alpha,\pmb \theta_0,k).
\end{split}
\end{align} 
    
For the second assertion in \eqref{eq:post-2}, note that the right hand side above is non-increasing in $j$. Therefore, it follows from the inequality above that, for all $s\geq 1$:
\begin{align}\label{eq:MABDE10}
\begin{split}
&\mathbb{E}\left[\Pi_{t,\alpha}\left(\frac{1}{2}\|\pmb \theta-\pmb \theta_0\|_{k^\alpha}^2 + D_{\alpha}^{(t)}(\pmb \theta,\pmb \theta_0) \geq \frac{(D+s)\alpha}{(1-\alpha)2} t\epsilon_t^2 \;\Big|\;\mathcal{F}_{t}\right) \Big | A_t  \right]\\
&\qquad \leq  2 C(1-\alpha,\pmb \theta_0,k) e^{ - \frac{(s-1)\alpha}{4}  t\epsilon_t^2 } .
\end{split}
\end{align}
Since $\frac{1}{2}\|\pmb \theta-\pmb \theta_0\|_{k^\alpha}^2 + D_{\alpha}^{(t)}(\pmb \theta,\pmb \theta_0)>0$, it is straightforward to see that:
\begin{align*}
\begin{split}
&\mathbb{E}\left[\frac{1}{2}\|\pmb \theta-\pmb \theta_0\|_{k^\alpha}^2 + D_{\alpha}^{(t)}(\pmb \theta,\pmb \theta_0)\;\Big|\;\mathcal{F}_{t} \right] = \int_{0}^{\infty} \mathbb{E}\left[\text{I}\left\{\frac{1}{2}\|\pmb \theta-\pmb \theta_0\|_{k^\alpha}^2 + D_{\alpha}^{(t)}(\pmb \theta,\pmb \theta_0) \geq u \right\}\;\Big|\;\mathcal{F}_{t} \right] du
\\
&\qquad \qquad\qquad \leq \frac{(D+2)\alpha}{(1-\alpha)2} t\epsilon_t^2 + \int_{\frac{(D+2)\alpha}{(1-\alpha)2} t\epsilon_t^2}^{\infty} \mathbb{E}\left[\text{I}\left\{\frac{1}{2}\|\pmb \theta-\pmb \theta_0\|_{k^\alpha}^2 + D_{\alpha}^{(t)}(\pmb \theta,\pmb \theta_0) \geq u \right\}\;\Big|\;\mathcal{F}_{t} \right]  du.
\end{split}
\end{align*}
Now using Fubini's theorem, we have:
\begin{align}
\begin{split}
&\mathbb{E}\left[\mathbb{E}\left[\frac{1}{2}\|\pmb \theta-\pmb \theta_0\|_{k^\alpha}^2 + D_{\alpha}^{(t)}(\pmb \theta,\pmb \theta_0)\;\Big|\; \mathcal{F}_{t} \right]\right]
\\
& \leq \frac{(D+2)\alpha}{(1-\alpha)2} t\epsilon_t^2 + \int_{\frac{(D+2)\alpha}{(1-\alpha)2} t\epsilon_t^2}^{\infty} \mathbb{E}\left[\mathbb{E}\left[\text{I}\left\{ \frac{1}{2}\|\pmb \theta-\pmb \theta_0\|_{k^\alpha}^2 + D_{\alpha}^{(t)}(\pmb \theta,\pmb \theta_0) \geq u \right\} \;\Big|\; \mathcal{F}_{t} \right]\right]  du
\\
& = \frac{(D+2)\alpha}{(1-\alpha)2} t\epsilon_t^2 + \int_{2}^{\infty} \mathbb{E}\left[\mathbb{E}\left[\text{I}\left\{\frac{1}{2}\|\pmb \theta-\pmb \theta_0\|_{k^\alpha}^2 + D_{\alpha}^{(t)}(\pmb \theta,\pmb \theta_0) \geq \frac{(D+s)\alpha}{(1-\alpha)2} t\epsilon_t^2 \right\} \;\Big|\; \mathcal{F}_{t} \right]\right] \frac{\alpha t\epsilon_t^2}{(1-\alpha)2}  ds
\\
& \leq \frac{(D+2)\alpha}{(1-\alpha)2} t\epsilon_t^2 + C(1-\alpha,\pmb \theta_0,k) \frac{\alpha t\epsilon_t^2}{(1-\alpha)}   e^{ - \frac{\alpha}{4}  t \epsilon_t^2 }  \int_{2}^{\infty}  e^{ - \frac{(s-2)\alpha}{4}  t \epsilon_t^2 } ds 
\\
&= \frac{(D+2)\alpha}{(1-\alpha)2} t\epsilon_t^2 + C(1-\alpha,\pmb \theta_0,k) \frac{4e^{ - \frac{\alpha}{4}  t \epsilon_t^2 } }{(1-\alpha)} 
\end{split}
\end{align}
where the last inequality above is due to~\eqref{eq:MABDE10}. Now the result follows by using~\eqref{eq:MABDE9a}.
\end{proof}

\section{Cameron-Martin Lower Bound}\label{app:cameron-martin-formula}

\begin{lemma}
\label{lemma:cameron-martin-formula}
Let $W^{a} \sim \mathrm{GP}(0, k_a(\cdot, \cdot))$ under the conditional law $\pmb{\theta}\mid a$ denoted by $\Pi_a(\cdot)$. If $\pmb{\theta}_0(\cdot) \in \mathcal{H}_a$, where $\mathcal{H}_a$ is the associated RKHS of $W^{a}$, then for every Borel set $A\subset \mathcal{C}[0, 1]^d$:
\begin{align*}
    \Pi_a\left(W^{a} \in A - \pmb{\theta}_0\right) = \Pi_a\left(W^{a} + \pmb\theta_0 \in A\right) \geq e^{-\frac{1}{2}\lVert \pmb\theta_0\rVert^{2}_{\mathcal{H}_a}}\Pi_a\left(W^{a} \in A\right).
\end{align*}
\end{lemma}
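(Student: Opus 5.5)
The plan is to prove the bound with the Cameron--Martin change-of-measure formula for the Gaussian element $W^{a}$, followed by a symmetrization step. One caveat up front: the inequality cannot hold for \emph{every} Borel set. Take $A$ to be a half-space $\{w: \ell(w) \geq c\}$ lying far in the direction of $\pmb\theta_0$. Then the ratio $\Pi_a(W^a+\pmb\theta_0\in A)/\Pi_a(W^a\in A)$ can be made arbitrarily small, as the computation below will show. I will therefore prove the statement for sets $A$ that are \emph{symmetric}, $A=-A$. This covers the sup-norm and RKHS balls $\{\|w\|\le \epsilon\}$ to which the lemma is applied in the contraction arguments, and I will state that restriction explicitly.

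\textbf{Step 1 (Cameron--Martin density).} View $W^{a}$ as a centered Gaussian random element of the separable Banach space $\mathcal{C}[0,1]^d$, with RKHS $\mathcal{H}_a$. For $h=\pmb\theta_0\in\mathcal{H}_a$, the Cameron--Martin theorem states that the law of $W^a+h$ is absolutely continuous with respect to the law of $W^a$. Its Radon--Nikodym derivative is
\[
\frac{d\Pi_a(W^a+h\in\cdot)}{d\Pi_a(W^a\in\cdot)}(w)=\exp\Bigl(Uh(w)-\tfrac12\|h\|_{\mathcal{H}_a}^2\Bigr),
\]
where $Uh$ is the isometry from $\mathcal{H}_a$ into the closed linear span of $W^a$ in $L^2(\Pi_a)$. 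Under $\Pi_a$, the variable $Uh(W^a)$ is $\mathcal{N}_1(0,\|h\|_{\mathcal{H}_a}^2)$, and $Uh$ is linear in $w$, meaning it is a limit of continuous linear functionals. I will cite this theorem rather than reprove it. The first equality in the lemma, $\Pi_a(W^a\in A-\pmb\theta_0)=\Pi_a(W^a+\pmb\theta_0\in A)$, is just the definition of the translate $A-\pmb\theta_0$. The density formula then gives
\[
\Pi_a(W^a+h\in A)=e^{-\frac12\|h\|_{\mathcal{H}_a}^2}\,\mathbb{E}\bigl[e^{Uh(W^a)}\,\mathrm{I}(W^a\in A)\bigr].
\]

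\textbf{Step 2 (symmetrization).} Since $W^a$ is centered Gaussian, $-W^a$ has the same law as $W^a$. Because $Uh$ is linear, $Uh(-W^a)=-Uh(W^a)$ almost surely. If $A=-A$, the events $\{W^a\in A\}$ and $\{-W^a\in A\}$ coincide, so the same expectation also equals $\mathbb{E}[e^{-Uh(W^a)}\mathrm{I}(W^a\in A)]$. Averaging the two expressions gives
\[
\mathbb{E}\bigl[\cosh(Uh(W^a))\,\mathrm{I}(W^a\in A)\bigr]\;\ge\;\Pi_a(W^a\in A),
\]
since $\cosh\ge 1$. Substituting into Step 1 yields the claimed bound with the factor $e^{-\frac12\|\pmb\theta_0\|_{\mathcal{H}_a}^2}$.

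\textbf{Main obstacle.} The obstacle is Step 2. The linearity of $Uh$ must be handled measure-theoretically: $Uh$ is only an $L^2$ limit of continuous linear functionals. I will pass to a subsequence converging almost surely, which gives $Uh(-w)=-Uh(w)$ for $\Pi_a$-almost every $w$. The symmetry hypothesis must also be made explicit, since this is exactly what fails for general Borel $A$. The remaining ingredients are the conditional Gaussianity given $a$ and the membership $\pmb\theta_0\in\mathcal{H}_a$; both are assumptions of the lemma, so the rest is routine.
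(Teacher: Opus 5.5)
Your argument is correct, and your caveat is justified. The lemma as stated for \emph{every} Borel set is false, and the paper's proof does not establish it. The paper uses the same Cameron--Martin density as your Step 1, but then asserts that the bound follows from ``Jensen's inequality and $\Pi_a(A)\le 1$''. The only way to make Jensen work here is conditionally: $\mathbb{E}[e^{Uh(W^a)}\mathrm{I}(W^a\in A)]\ge \Pi_a(W^a\in A)\exp\bigl(\mathbb{E}[Uh(W^a)\mid W^a\in A]\bigr)$. That conditional mean is zero when $A=-A$, but it can be arbitrarily negative otherwise, and invoking $\Pi_a(A)\le 1$ cannot control its sign. So a repaired version of the paper's route needs exactly your symmetry hypothesis. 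Your averaging step with $\cosh\ge 1$ reaches the same conclusion without conditioning on $A$, and is the cleaner of the two.

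Nothing downstream is lost. The squared exponential, Mat\'ern and rational quadratic contraction lemmas apply the result only to sup-norm balls $\{f:\lVert f\rVert_\infty\le r\}$, which are symmetric. Symmetry of the ball, together with $-W^a$ having the law of $W^a$, also reconciles the translate $A-\pmb\theta_0$ in the lemma with the events $\lVert\pmb\theta-\pmb\theta_0\rVert_\infty\le r$ used there.

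One slip in your counterexample: a half-space lying far in the direction of $\pmb\theta_0$ makes the ratio large, not small. On $\{Uh\ge c\}$ the Cameron--Martin density is at least $e^{c-\frac12\lVert h\rVert_{\mathcal{H}_a}^2}$. Take the opposite side instead, e.g.\ $A=\{w: w(x^*)\le -c\}$ with $\pmb\theta_0(x^*)>0$. Then $\Pi_a(W^a+\pmb\theta_0\in A)/\Pi_a(W^a\in A)$ is a ratio of Gaussian tails that tends to $0$ as $c\to\infty$, while $e^{-\frac12\lVert\pmb\theta_0\rVert_{\mathcal{H}_a}^2}$ stays fixed.
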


\begin{proof}
Since, $\pmb{\theta}_0(\cdot) \in \mathcal{H}_a$, the Gaussian measures induced by $W^{a}$ and $W^{a} + \pmb\theta_0$ on $\mathcal{C}[0, 1]^{d}$ are mutually absolutely continuous and the Radon-Nikodym derivative is given by the Cameron-Martin formula~\citep{cameron-martin}:
$$
\frac{d\Pi_a^{\pmb\theta_0}}{d\Pi_a}(w) = e^{\langle w, \pmb\theta_0\rangle_{\mathcal{H}_a} - \frac{1}{2}\lVert \pmb\theta_0\rVert_{\mathcal{H}_a}^{2}},
$$
where $\Pi_a^{\pmb\theta_0}$ denotes the law of $W^{a} + \pmb\theta_0$. Then, for any measurable $A$:
$$
\Pi_a^{\pmb\theta_0}(A) = \int \text{I}_A(w)e^{\langle w, \pmb\theta_0\rangle_{\mathcal{H}_a} - \frac{1}{2}\lVert \pmb\theta_0\rVert_{\mathcal{H}_a}^2}d\Pi_a(w),
$$
where $\text{I}_A(w) = 1$ if $w\in A$ and $\text{I}_A(w) = 0$ otherwise. Applying Jensen's inequality and using $\Pi_a(A) \leq 1$, yields the bound:
\begin{align*}
    \Pi_a^{\pmb\theta_0}(A) \geq e^{-\frac{1}{2}\lVert \pmb\theta_0\rVert^{2}_{\mathcal{H}_a}}\Pi_a(A).
\end{align*}
\end{proof}

\section{Posterior Contraction Rate for Squared Exponential Kernel}\label{app:SE-contraction}

\begin{lemma}
\label{lemma:SE-contraction-rate}
Consider the nonparametric regression setting in~\eqref{eq:regression-setting} and a conditional Gaussian process prior $\pmb\theta \mid a \sim \mathrm{GP}(0, k_a(\cdot, \cdot))$ having the squared exponential kernel:
$$k_a(x, x') = e^{-a^{2}\lVert x-x'\rVert_{2}^2}, \quad a>0.
$$ 
The prior for $a$ satisfies, $g(a) \geq A_1 a^{p}e^{-B_1 a^{d} \log^{q}a}$. For an arbitrary $a>0$, the true function $\pmb\theta_0(\cdot)\in \mathcal{H}_u$ for all $u\in I(a):= [a/2, 2a]$ and $\sup_{u\in I(a)}\lVert \pmb\theta_0\rVert_{\mathcal{H}_u}^{2} \leq R^{2}_{I(a)} < \infty$, where $\mathcal{H}_a$ is the corresponding prior RKHS. Then, the $\alpha$-posterior contracts at rate:
\begin{align*}
    \epsilon_t = \mathcal{O}\left(t^{-\frac{1}{2}}\log^{\frac{d+1}{2}}t\right).
\end{align*}
\end{lemma}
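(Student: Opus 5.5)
The strategy is to verify Assumption~\ref{ass:prior} with $\epsilon_t \asymp t^{-1/2}\log^{(d+1)/2} t$ and then invoke Lemma~\ref{lem:post}, which converts prior thickness into contraction at rate $\epsilon_t$.

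\textbf{Step 1: reduce the R\'enyi ball to a sup-norm ball.} Under the Gaussian noise model, $D_2^{(t)}(\pmb\theta_0,\pmb\theta) = \lambda^{-1}\sum_{s=1}^t(\pmb\theta(x_s)-\pmb\theta_0(x_s))^2 \le \lambda^{-1} t\,\|\pmb\theta-\pmb\theta_0\|_\infty^2$. Hence
\[
B(\pmb\theta_0,\epsilon_t)\supseteq\Bigl\{\|\pmb\theta-\pmb\theta_0\|_\infty\le \bar\epsilon_t\Bigr\},\qquad \bar\epsilon_t:=c\,\epsilon_t,\quad c=\sqrt{D\alpha\lambda/4}.
\]
This bound holds uniformly over the adaptively chosen query points, which matters because Assumption~\ref{ass:prior} is stated relative to the query points. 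It therefore suffices to show
\[
\Pi\bigl(\|\pmb\theta-\pmb\theta_0\|_\infty\le\bar\epsilon_t\bigr)\ge e^{-D\alpha t\epsilon_t^2/4}.
\]

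\textbf{Step 2: condition on the length scale and decentre.} Write $\Pi=\int \Pi_a\, g(a)\,da$ and restrict $a$ to an interval $[a_t,2a_t]$. Lemma~\ref{lemma:cameron-martin-formula}, applied to the symmetric set $A=\{\|w\|_\infty\le\bar\epsilon_t\}$, gives
\[
\Pi_a\bigl(\|W^a-\pmb\theta_0\|_\infty\le\bar\epsilon_t\bigr)\ge e^{-\frac12 R_{I(a_t)}^2}\,\Pi_a\bigl(\|W^a\|_\infty\le\bar\epsilon_t\bigr).
\]
This is valid because $\pmb\theta_0\in\mathcal H_u$ for $u\in I(a_t)$, with the uniform bound $R_{I(a_t)}$. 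The map $-\pmb\theta_0$ versus $+\pmb\theta_0$ is harmless by symmetry of $W^a$.

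\textbf{Step 3: centred small-ball estimate for the SE process.} Use the standard bound (van der Vaart and van Zanten, 2009), valid for $a\ge a_0$ and $\bar\epsilon$ small:
\[
-\log\Pi_a\bigl(\|W^a\|_\infty\le\bar\epsilon\bigr)\lesssim a^d\log^{1+d}(a/\bar\epsilon).
\]
It is obtained from metric entropy of the unit RKHS ball, $\log N(\varepsilon,\mathbb H_1^a,\|\cdot\|_\infty)\lesssim a^d\log^{1+d}(1/\varepsilon)$, through the Kuelbs--Li correspondence.

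\textbf{Step 4: integrate over $a$ and choose $a_t$.} The mixture gives
\[
\Pi\bigl(B(\pmb\theta_0,\epsilon_t)\bigr)\ge e^{-R^2/2}\exp\bigl(-Ca_t^d\log^{1+d}(a_t/\bar\epsilon_t)\bigr)\int_{a_t}^{2a_t}g(a)\,da,
\]
and the hyperprior lower bound yields $\int_{a_t}^{2a_t}g \ge \exp(-C'a_t^d\log^q a_t)$. The $\pmb\theta_0$-dependence is only through $R^2$, so it is cleanest to take $a_t\equiv a$ fixed, which keeps $R_{I(a)}$ bounded. The exponent is then $\lesssim \log^{1+d}(1/\epsilon_t)+O(1)$, and matching it to $D\alpha t\epsilon_t^2/4$ requires $t\epsilon_t^2\gtrsim\log^{1+d}t$, i.e.\ $\epsilon_t\asymp t^{-1/2}\log^{(d+1)/2}t$, for $t\ge t_0$ and $D$ large enough (absorbing constants, $\lambda$ and $\alpha$). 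The integrability condition $\int e^{(1-\alpha)\|\pmb\theta\|_k^2}d\Pi<\infty$ required by Lemma~\ref{lem:post} I would take as part of the standing hypotheses. With Assumption~\ref{ass:prior} verified, Lemma~\ref{lem:post} delivers the stated rate.

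\textbf{Main obstacle.} The hardest step is Step 3 together with the bookkeeping of Step 4. The centred small-ball exponent must be shown to be only polylogarithmic, $\log^{1+d}(1/\bar\epsilon)$, uniformly for $a$ in the chosen interval. This is what pins the exponent $(d+1)/2$ on the logarithm. I would import the entropy bound for the SE RKHS ball directly rather than re-derive it.
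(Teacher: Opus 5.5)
Your argument is essentially the paper's proof. The sup-norm ball is placed inside the $2$-R\'enyi ball. The bandwidth is integrated over an interval on which $\|\pmb\theta_0\|_{\mathcal H_u}$ is uniformly bounded. Cameron--Martin removes the centring, and the van der Vaart--van Zanten small-ball bound gives an exponent of order $\log^{1+d}(1/\epsilon_t)$. Holding the bandwidth fixed then yields $t\epsilon_t^2\asymp\log^{d+1}t$.

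The one step to change is the last. You pass through Lemma~\ref{lem:post}, whose integrability hypothesis is not among this lemma's assumptions and in fact fails for Gaussian process priors. A draw lies outside its own RKHS almost surely, so $\|\pmb\theta\|_k=\infty$ with positive prior probability and the exponential moment diverges. Instead, conclude as the paper does: feed the prior-mass bound into the fractional-posterior contraction result of \cite{bhattacharya2019bayesian}, which requires nothing beyond that bound.
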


\begin{proof}
Following~\cite{bhattacharya2019bayesian} it suffices to verify a prior mass bound for the $2$-R\'{e}nyi neighborhood as defined in Assumption~\ref{ass:prior}:
\begin{align*}
    B(\pmb\theta_0, \epsilon_t) := \left\{\pmb\theta \in \Theta\;:\;D_2^{(t)}(\pmb\theta_0, \pmb\theta) \leq \frac{D\alpha t \epsilon_t^2}{4}\right\}.
\end{align*}
Note that, $D_2^{(t)}(\pmb\theta_0, \pmb\theta) = t\sigma^{-2}\lVert \pmb\theta - \pmb\theta_0\rVert_{2}^{2}$ from~\eqref{eq:regression-setting} with known variance $\sigma^{2}\in \mathbb{R}^{+}$. Hence:
\begin{align*}
B(\pmb\theta_0, \epsilon_t) = \left\{\pmb\theta\in \Theta\;:\;\lVert \pmb\theta - \pmb\theta_0\rVert_{2} \leq \frac{\sigma}{2}\sqrt{D\alpha}\epsilon_t\right\}.
\end{align*}
We have the containment:
\begin{align}
\label{eq:SE-contraction-1}
\left\{\lVert \pmb\theta - \pmb\theta_0\rVert_{\infty} \leq C\epsilon_t\right\} \subset B(\pmb\theta_0, \epsilon_t),
\end{align}
where $C:= \sigma \sqrt{D\alpha}/2$. Writing $\Pi(\cdot)$ for the unconditional prior and $\Pi_{u}(\cdot)$ for the conditional law $\pmb\theta\mid a=u$. For any measurable event $E$:
\begin{align*}
\Pi(E) = \int_{0}^{\infty}\Pi_u(E) g(u)du \geq \int_{I(a)}\Pi_u(E)g(u)du,
\end{align*}
where $I(a):= [a/2, 2a]$. Using the hyperprior lower bound~\citep{bhattacharya2019bayesian}, $g(a) \geq A_1 a^p e^{-B_1a^d\log^{q}a}$, for all $u\in I(a)$:
\begin{align*}
g(u) \geq A_1\left(\frac{a}{2}\right)^{p}e^{-B_1(2a)^d\log^{q}(2a)} =: g_0(a) > 0.
\end{align*}
Hence:
\begin{align}
\label{eq:SE-contraction-2}
\Pi(E) \geq |I(a)| g_0(a)\inf_{u\in I(a)}\Pi_u(E).
\end{align}
We apply~\eqref{eq:SE-contraction-2} with $E:= \{\lVert \pmb\theta-\pmb\theta_0\rVert_{\infty} \leq C\epsilon_t\}$, so it suffices to lower bound $\inf_{u\in I(a)}\Pi_u(\lVert \pmb\theta-\pmb\theta_0\rVert_{\infty} \leq C\epsilon_t)$.

Fix $u\in I(a)$ and let $W^{u}\sim \mathrm{GP}(0, k_u(\cdot, \cdot))$. By Lemma~\ref{lemma:cameron-martin-formula}, if $\pmb\theta_0(\cdot) \in \mathcal{H}_u$, where $\mathcal{H}_u$ denotes the RKHS associated with the kernel of $W^{u}$, then for the sup-norm ball $A:= \{f: \lVert f \rVert_{\infty} \leq C\epsilon_t\}$:
\begin{align*}
\Pi_u\left(\lVert \pmb\theta-\pmb\theta_0\rVert_{\infty}\leq C\epsilon_t\right) \geq e^{-\frac{1}{2}\lVert\pmb\theta_0\rVert^{2}_{\mathcal{H}_u}}\Pi_u\left(\lVert W^{u}\rVert_{\infty}\leq C\epsilon_t\right).
\end{align*}
Assuming $\pmb\theta_0(\cdot) \in \mathcal{H}_u$ for all $u\in I(a)$ and using $\sup_{u\in I(a)}\lVert \pmb{\theta}_0\rVert_{\mathcal{H}_u}^{2} \leq R_{I(a)}^{2}< \infty$, we have:
\begin{align}
\label{eq:SE-contraction-3}
\inf_{u\in I(a)}\Pi_u\left(\lVert \pmb\theta-\pmb\theta_0\rVert_{\infty} \leq C\epsilon_t\right) \geq e^{-\frac{R^{2}_{I(a)}}{2}}\inf_{u\in I(a)}\Pi_u\left(\lVert W^{u}\rVert_{\infty} \leq C\epsilon_t\right).
\end{align}

For the squared exponential kernel on $[0, 1]^{d}$, there exist constants $C_1 > 0$ and $\epsilon_0\in (0, 1)$ such that for all $\epsilon \in (0, \epsilon_0)$~\citep{vanderVaart2009}:
\begin{align}
\label{eq:SE-contraction-4}
-\log \Pi_u\left(\lVert W^{u}\rVert_{\infty} \leq \epsilon\right) \leq C_1 (u \vee 1) \log^{d+1}\left(\frac{u \vee 1}{\epsilon}\right),
\end{align}
where $u\vee 1:= \max\{u, 1\}$. For $u\in I(a)$, we have $(u\vee 1) \leq (2a\vee 1)$, so applying \eqref{eq:SE-contraction-4} with $\epsilon = C\epsilon_t$ yields:
\begin{align*}
-\log \Pi_u\left(\lVert W^{\infty}\rVert_{\infty} \leq C\epsilon_t\right) \leq C_1 (2a\vee 1)^{d} \log^{d+1}\left(\frac{2a\vee 1}{C\epsilon_t}\right).
\end{align*}
Therefore:
\begin{align}
\label{eq:SE-contraction-5}
\inf_{u\in I(a)} \Pi_u\left(\lVert W^{u}\rVert_{\infty} \leq C\epsilon_t\right) \geq e^{-C_1 (2a\vee 1)^{d} \log^{d+1}\left(\frac{2a\vee 1}{C\epsilon_t}\right)}.
\end{align}
Combining~\eqref{eq:SE-contraction-3} and~\eqref{eq:SE-contraction-5} gives:
\begin{align}
\label{eq:SE-contraction-6}
\inf_{u\in I(a)}\Pi_u\left(\lVert \pmb\theta-\pmb\theta_0\rVert_{\infty} \leq C\epsilon_t\right) \geq e^{-\frac{R^{2}_{I(a)}}{2} - C_1 (2a\vee 1)^{d} \log^{d+1}\left(\frac{2a\vee 1}{C\epsilon_t}\right)}.
\end{align}
Plugging~\eqref{eq:SE-contraction-6} into~\eqref{eq:SE-contraction-2} yields:
\begin{align*}
\Pi\left(\lVert \pmb\theta - \pmb\theta_0\rVert_{\infty}\leq C\epsilon_t\right) \geq |I(a)| g_0(a) e^{-\frac{R^{2}_{I(a)}}{2} - C_1(2a\vee 1)^{d}\log^{d+1}\left(\frac{2a\vee 1}{C\epsilon_t}\right)},
\end{align*}
hence:
\begin{align}
\label{eq:SE-contraction-7}
-\log \Pi\left(\lVert \pmb\theta - \pmb\theta_0\rVert_{\infty} \leq C\epsilon_t\right) \leq \frac{R^{2}_{I(a)}}{2} - \log(|I(a)| g_0(a)) + C_1(2a\vee 1)^{d}\log^{d+1}\left(\frac{2a\vee 1}{C\epsilon_t}\right).
\end{align}
By~\eqref{eq:SE-contraction-1}, the same upper bound holds for $-\log \Pi(B(\pmb\theta_0, \epsilon_t))$.

Now, define the sequence:
\begin{align}
\label{eq:SE-contraction-8}
\epsilon_t^{2}(a) := \frac{2}{t}\left[\frac{R^{2}_{I(a)}}{2} - \log(|I(a)|g_0(a))\right] + \frac{2C_1(2a\vee 1)^{d}}{t}\log^{d+1}\left(\frac{(2a\vee 1)\sqrt{t}}{C}\right).
\end{align}
Substituting $\epsilon_t^{2}(a)$ given by~\eqref{eq:SE-contraction-8} in~\eqref{eq:SE-contraction-7} and using $\log\left(\frac{2a\vee 1}{C\epsilon_t(a)}\right) \leq \log\left(\frac{(2a\vee 1)\sqrt{t}}{C}\right)$ for $t$ large gives prior mass bound:
\begin{align*}
-\log \Pi(B(\pmb\theta_0, \epsilon_t(a))) \lesssim t\epsilon_t^2(a).
\end{align*}
Moreover, since $\log\left(\frac{(2a\vee 1)\sqrt{t}}{C}\right) \asymp \frac{1}{2}\log t$ for fixed $a$:
\begin{align*}
    \epsilon_t^{2}(a) \asymp \frac{(a\vee 1)^d \log^{d+1}t}{t},
\end{align*}
is the posterior contraction rate following~\cite{bhattacharya2019bayesian}[Corollary 3.5].

Let: 
\begin{align*}
    \mathcal{A} := \left\{a>0\;:\;\pmb\theta_0(\cdot)\in \mathcal{H}_u\text{ for all }u\in I(a), R_{I(a)}:=\sup_{u\in I(a)}\lVert\pmb\theta_0\rVert_{\mathcal{H}_{u}} < \infty\right\},
\end{align*}
and define $\epsilon_t^2:= \inf_{a\in \mathcal{A}}\epsilon_t^{2}(a)$, where $\epsilon_t^{2}(a)$ is given in~\eqref{eq:SE-contraction-8}. Pick $a_t\in \mathcal{A}$ such that $\epsilon_t^{2}(a) \leq \epsilon_t^{2} + t^{-1}$. Then, for this $\epsilon_t^{2}$ the prior mass condition, $-\log \Pi(\lVert \pmb\theta-\pmb\theta_0\rVert_{\infty} < C\epsilon_t(a)) \lesssim t\epsilon_t^{2}(a) \lesssim t\epsilon_t^{2}$, holds. In particular, for some finite $a_{\star}\in \mathcal{A}$, we have:
\begin{align}
\label{eq:SE-contraction-9}
\epsilon_t^{2} \lesssim \frac{(a_{\star}\vee 1)^d\log^{d+1}t}{t} \implies \epsilon_t = \mathcal{O}\left(t^{-\frac{1}{2}}\log^{\frac{d+1}{2}}t\right).
\end{align}
\end{proof}

\section{Posterior Contraction Rate for Mat\'{e}rn-$\nu$ Kernel}
\label{app:matern-contraction-rate}

\begin{lemma}
\label{lemma:matern-contraction-rate}
Consider the nonparametric regression setting in~\eqref{eq:regression-setting} and a conditional Gaussian process prior $\pmb\theta\mid a\sim \mathrm{GP}(0, k_a^{(\nu)}(\cdot, \cdot))$ having the Mat\'{e}rn-$\nu$ kernel: 
$$
k_a^{(\nu)}(x, x') = 2^{1-\nu}\Gamma^{-1}(\nu)(\sqrt{2\nu}a\lVert x-x'\rVert_{2})^{\nu} K_{\nu}(\sqrt{2\nu}a\lVert x-x'\rVert_2), \quad a>0, \nu>\frac{d}{2}.
$$
The prior for $a$ satisfies, $g(a) \geq A_1 a^{p}e^{-B_1 a^{d}\log^{q}a}$. For constants $0<a_L < a_U < \infty$, the true function $\pmb\theta_0(\cdot) \in \mathcal{H}_a^{(\nu)}$ for all $a\in [a_L, a_U]$ and $\sup_{a\in [a_L, a_U]}\lVert \pmb\theta_0\rVert_{\mathcal{H}_a^{(\nu)}}^{2} \leq R^{2} < \infty$, where $\mathcal{H}_a^{(\nu)}$ is the corresponding prior RKHS. Then, the $\alpha$-posterior contracts at rate:
\begin{align*}
\epsilon_t = t^{-\frac{\nu}{2\nu+d}}\log^{\frac{q}{2+\alpha^{-1}d}}t.
\end{align*}
\end{lemma}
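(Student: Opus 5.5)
I will follow the template of Lemma~\ref{lemma:SE-contraction-rate}. By the general fractional-posterior result of \cite{bhattacharya2019bayesian} (their Corollary 3.5), and as in Lemma~\ref{lem:post}, it suffices to verify the prior mass condition of Assumption~\ref{ass:prior}, namely $-\log\Pi(B(\pmb\theta_0,\epsilon_t))\lesssim t\epsilon_t^2$. Under the Gaussian model \eqref{eq:regression-setting}, $D_2^{(t)}(\pmb\theta_0,\pmb\theta)$ is a multiple of $\lambda^{-1}\sum_{s\le t}(\pmb\theta(x_s)-\pmb\theta_0(x_s))^2\le t\lambda^{-1}\|\pmb\theta-\pmb\theta_0\|_\infty^2$. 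Hence the sup-norm ball $\{\|\pmb\theta-\pmb\theta_0\|_\infty\le C\epsilon_t\}$, with $C\asymp\sqrt{D\alpha\lambda}$, is contained in $B(\pmb\theta_0,\epsilon_t)$ uniformly over query designs. This is the analogue of \eqref{eq:SE-contraction-1}.

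Next I integrate over the scale hyperparameter. Unlike the squared exponential case, I restrict to the fixed interval $[a_L,a_U]$:
\[
\Pi(E)\ \ge\ (a_U-a_L)\,\inf_{a\in[a_L,a_U]}g(a)\,\inf_{a\in[a_L,a_U]}\Pi_a(E),
\]
where $\inf g\ge A_1 a_L^{p}e^{-B_1a_U^d\log^q a_U}>0$ is a constant. For each fixed $a$, Lemma~\ref{lemma:cameron-martin-formula} together with $\sup_a\|\pmb\theta_0\|^2_{\mathcal H^{(\nu)}_a}\le R^2$ gives
\[
\Pi_a(\|\pmb\theta-\pmb\theta_0\|_\infty\le C\epsilon_t)\ \ge\ e^{-R^2/2}\,\Pi_a(\|W^a\|_\infty\le C\epsilon_t).
\]

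The key input is the small-ball exponent of the Mat\'ern process. Its spectral density decays like $(a^2+\|\omega\|^2)^{-(\nu+d/2)}$, so the RKHS is norm-equivalent to a Sobolev space of order $\nu+d/2$. The standard bound (e.g.\ from van der Vaart and van Zanten via entropy of the RKHS unit ball) is
\[
-\log\Pi_a(\|W^a\|_\infty\le\epsilon)\ \le\ C_1(a)\,\epsilon^{-d/\nu},
\]
with $C_1(a)$ continuous in $a$, and hence bounded on $[a_L,a_U]$. Collecting terms gives
\[
-\log\Pi(B(\pmb\theta_0,\epsilon_t))\ \le\ c_0+C_1\,\epsilon_t^{-d/\nu}.
\]
Solving $\epsilon_t^{-d/\nu}\asymp t\epsilon_t^2$ yields $\epsilon_t\asymp t^{-\nu/(2\nu+d)}$. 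Any logarithmic factor, in particular the stated $\log^{q/(2+\alpha^{-1}d)}t$, only enlarges $\epsilon_t$, so the prior mass inequality continues to hold with that choice.

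The main obstacle is pinning down the small-ball estimate with constants uniform in $a$ over $[a_L,a_U]$, and then matching the precise logarithmic power claimed in the statement. To handle this, I would first bound $\sup_{a\in[a_L,a_U]}$ of the metric entropy of the unit ball of $\mathcal H_a^{(\nu)}$ using monotonicity of the spectral density in $a$. I would then invoke the entropy–small-ball equivalence of Kuelbs–Li and Li–Linde. If sharper log tracking is needed, I would let the interval scale with $t$, as in the squared exponential proof, so that the hyperprior tail $e^{-B_1a^d\log^q a}$ is what produces the $\log^q$ factor.
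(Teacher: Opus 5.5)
Your proposal is correct and follows the paper's proof essentially step for step: a sup-norm ball is included in $B(\pmb\theta_0,\epsilon_t)$, the hyperprior is restricted to $[a_L,a_U]$, the Cameron--Martin bound contributes the factor $e^{-R^2/2}$, and the uniform Mat\'ern small-ball exponent $\epsilon^{-d/\nu}$ then gives $-\log\Pi(B(\pmb\theta_0,\epsilon_t))\le c_0+C_1\epsilon_t^{-d/\nu}$. The differences are minor and mostly in your favour: you compute $D_2^{(t)}$ exactly for the Gaussian model rather than invoking a local R\'enyi--Bernstein condition, and your monotonicity observation (valid for $q\ge 0$) actually justifies the $\log^{q/(2+\alpha^{-1}d)}t$ factor, which the paper only asserts; however, the Mat\'ern spectral density is not monotone in $a$ (it scales like $a^{-d}$ at $\omega=0$ and like $a^{2\nu}$ at high frequencies), so get uniformity from two-sided spectral bounds over $[a_L,a_U]$ instead.
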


\begin{proof}
We follow the same road map as in the proof of Lemma~\ref{lemma:SE-contraction-rate} above. Writing $\Pi(\cdot)$ for the unconditional prior and $\Pi_a(\cdot)$ for the conditional law $\pmb\theta \mid a$. Let $W^{a} \sim \mathrm{GP}(0, k^{(\nu)}_a(\cdot, \cdot))$ under $\Pi_a(\cdot)$. 

Also, let $D_2^{(t)}(\pmb\theta_0, \pmb\theta) = \sum_{s=1}^{t}D_2^s(\pmb\theta_0, \pmb\theta)$ be the $2$-R\'{e}nyi divergence. By the local quadratic R\'{e}nyi-Bernstein condition~\citep{ghosal2000convergence,kleijn2006misspecification,bhattacharya2019bayesian}, for all $\pmb\theta$ such that $\lVert \pmb\theta-\pmb\theta_0\rVert_{\infty} \leq c_0$:
\begin{align*}
D_2^s(\pmb\theta_0, \pmb\theta) \leq c_L (\pmb\theta(x_s) - \pmb\theta_0(x_s))^2,\quad s = 1, 2, \ldots, t,
\end{align*}
where $c_0, c_L > 0$ are constants. Summing and using $|\pmb\theta(x_s) - \pmb\theta_0(x_s)| \leq \lVert \pmb\theta-\pmb\theta_0\rVert_{\infty}$ yields:
\begin{align*}
D_2^{(t)}(\pmb\theta_0, \pmb\theta) \leq c_L\sum_{s=1}^{t}(\pmb\theta(x_s) - \pmb\theta_0(x_s))^2 \leq c_L t\lVert\pmb\theta-\pmb\theta_0\rVert_{\infty}^{2}.
\end{align*}
Therefore, choosing $C'' := \sqrt{D\alpha / 4 c_L}$, we have the set inclusion for all sufficiently small $\epsilon_t$ (so that $C''\epsilon_t \leq c_0$):
\begin{align*}
\left\{\lVert \pmb\theta-\pmb\theta_0\rVert_{\infty} \leq C''\epsilon_t\right\} \subset B(\pmb\theta_0, \epsilon_t):= \left\{\pmb\theta\in \Theta\;:\;D_2^{(t)}(\pmb\theta_0, \pmb\theta) \leq \frac{D\alpha t \epsilon_t^{2}}{4}\right\}.
\end{align*}
Hence:
\begin{align*}
\Pi\left(B(\pmb\theta_0, \epsilon_t)\right) \geq \Pi\left(\lVert \pmb\theta-\pmb\theta_0\rVert_{\infty} < C'' \epsilon_t\right).
\end{align*}

For any fixed $a\in [a_L, a_U]$, since $\pmb\theta_0(\cdot) \in \mathcal{H}_a^{(\nu)}$, Lemma~\ref{lemma:cameron-martin-formula} with $A:= \{f\;:\; \lVert f\rVert_{\infty} \leq C''\epsilon_t\}$ yields:
\begin{align}
\label{eq:matern-contraction-1}
\Pi_a\left(\lVert \pmb\theta-\pmb\theta_0\rVert_{\infty} \leq C''\epsilon_t\right) \geq e^{-\frac{1}{2}\lVert \pmb\theta_0\rVert_{\mathcal{H}_a^{(\nu)}}^{2}}\Pi_a\left(\lVert W^{a}\rVert_{\infty} \leq C''\epsilon_t\right).
\end{align}
As $\sup_{a\in [a_L, a_U]}\lVert \pmb\theta_0\rVert_{\mathcal{H}_a^{(\nu)}}^{2} \leq R^{2} < \infty$, for all $a\in [a_L, a_U]$ we have $e^{-\frac{1}{2}\lVert \pmb\theta_0\rVert_{\mathcal{H}_a^{(\nu)}}^{2}} \geq e^{-\frac{R^2}{2}}$.

For Mat\'{e}rn-$\nu$ kernel on $[0, 1]^{d}$ with $\nu > \frac{d}{2}$, there exist $C_1 > 0$ and $\epsilon_0 \in (0, 1)$ such that for all $a\in [a_L, a_U]$ and all $0< \epsilon \leq \epsilon_0$~\citep{vdv-vz-2008,vdv-vz-2011}:
\begin{align*}
-\log \Pi_a\left(\lVert W^{a}\rVert_{\infty} \leq \epsilon\right) \leq C_1 a^{d}\epsilon^{-\frac{d}{\nu}}.
\end{align*}
Taking $\epsilon = C'' \epsilon_t$ and using $a^{d} \leq a_U^{d}$ gives:
\begin{align*}
-\log \Pi_a\left(\lVert W^{a}\rVert_{\infty} \leq C''\epsilon_t\right) \leq C_1 a_U^{d} (C''\epsilon_t)^{-\frac{d}{\nu}}.
\end{align*}
Combining with~\eqref{eq:matern-contraction-1} yields, uniformly over $a\in [a_L, a_U]$:
\begin{align}
\label{eq:matern-contraction-2}
\Pi_a\left(\lVert \pmb\theta-\pmb\theta_0\rVert_{\infty} \leq C''\epsilon_t\right) \geq e^{-\frac{R^2}{2} - C_1 a_U^{d} (C''\epsilon_t)^{-\frac{d}{\nu}}}.
\end{align}

Let $I(a):= [a_L, a_U]$. Then, for any measurable event $E$:
\begin{align*}
\Pi(E) = \int_{0}^{\infty}\Pi_a(E) g(a)da \geq \int_{I(a)} \Pi_a(E) g(a) da. 
\end{align*}
Since, $g$ is bounded below on the compact set $I(a)$ as $g(a) \geq A_1 a^{p}e^{-B_1 a^{d}\log^{q}a}$, there exist $g_{I(a)} > 0$ such that $\inf_{a\in I(a)} g(a) \geq g_{I(a)}$. Hence:
\begin{align}
\label{eq:matern-contraction-3}
\Pi(E) \geq |I(a)| g_{I(a)} \inf_{a\in I(a)}\Pi_a(E).
\end{align}
From~\eqref{eq:matern-contraction-3}, it suffices to lower bound $\inf_{a\in I(a)}\Pi_a(E)$. Now, take $E:= \{\lVert \pmb\theta-\pmb\theta_0\rVert_{\infty} \leq C''\epsilon_t\}$. By~\eqref{eq:matern-contraction-2}, the lower bound is uniform in $a\in I(a)$, so:
\begin{align*}
\inf_{a\in I(a)} \Pi_a(E) \geq e^{-\frac{R^2}{2} - C_1 a_U^{d}(C''\epsilon_t)^{-\frac{d}{\nu}}}.
\end{align*}
Substituting into~\eqref{eq:matern-contraction-3} gives:
\begin{align}
\label{eq:matern-contraction-4}
\Pi\left(\lVert \pmb\theta - \pmb\theta_0\rVert_{\infty} \leq C''\epsilon_t\right) \geq |I(a)| g_{I(a)}e^{-\frac{R^2}{2} - C_1 a_U^{d}(C'' \epsilon_t)^{-\frac{d}{\nu}}}.
\end{align}
\eqref{eq:matern-contraction-4} implies:
\begin{align*}
-\log \Pi\left(\lVert \pmb\theta-\pmb\theta_0\rVert_{\infty} \leq C''\epsilon_t\right) \leq C_2 + C_3\epsilon_t^{-\frac{d}{\nu}},
\end{align*}
where $C_2 := -\log (|I(a)|g_{I(a)}) + \frac{R^2}{2}$ and $C_3:= C_1 a_U^{d}(C'')^{-\frac{d}{\nu}}$.

Therefore, following~\cite{bhattacharya2019bayesian}[Corollary 3.5] the posterior contraction rate with which the prior mass condition, $-\log \Pi(B(\pmb\theta_0, \epsilon_t)) \lesssim t\epsilon_t^2$, holds is:
\begin{align}
\label{eq:matern-contraction-5}
\epsilon_t = t^{-\frac{\nu}{2\nu+d}}\log^{\frac{q}{2 + \alpha^{-1}d}}t.
\end{align}
\end{proof}

\section{Posterior Contraction Rate for Rational Quadratic Kernel}
\label{app:rq-contraction-rate}

\begin{lemma}
\label{lemma:rq-contraction-rate}
Consider the nonparametric regression setting in~\eqref{eq:regression-setting} and a conditional Gaussian process prior $\pmb\theta\mid a\sim \mathrm{GP}(0, k_{\mathrm{RQ}, a}(\cdot, \cdot))$ having the rational quadratic kernel:
\begin{align*}
k_{\mathrm{RQ}, a}(x, x') = \left(1 + \frac{a^{2}\lVert x - x'\rVert_{2}^{2}}{2\nu}\right)^{-\nu},\quad a>0, \nu>\frac{d}{2}.
\end{align*}
The prior for $a$ satisfies, $g(a) \geq A_1 a^{p}e^{-B_1 a^{d}\log^{q}a}$. For constants $0<a_L < a_U < \infty$, the true function $\pmb\theta_0(\cdot) \in \mathcal{H}_a$ for all $a\in [a_L, a_U]$ and $\sup_{a\in [a_L, a_U]}\lVert \pmb\theta_0\rVert_{\mathcal{H}_a}^{2} \leq R^{2} < \infty$, where $\mathcal{H}_a$ is the corresponding prior RKHS. Then, the $\alpha$-posterior contracts at rate:
\begin{align*}
\epsilon_t \asymp t^{-\frac{\nu}{2\nu+d}}.
\end{align*}
\end{lemma}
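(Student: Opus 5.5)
We follow the same road map as in the proofs of Lemma~\ref{lemma:SE-contraction-rate} and Lemma~\ref{lemma:matern-contraction-rate}. By \cite{bhattacharya2019bayesian}[Corollary 3.5], it suffices to verify the prior thickness condition of Assumption~\ref{ass:prior}, namely $-\log \Pi(B(\pmb\theta_0,\epsilon_t)) \lesssim t\epsilon_t^2$, for the claimed $\epsilon_t \asymp t^{-\frac{\nu}{2\nu+d}}$.

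\emph{Reduction to a sup-norm ball.} For Gaussian noise, $D_2^{(t)}(\pmb\theta_0,\pmb\theta) \le \lambda^{-1}\sum_{s=1}^t(\pmb\theta(x_s)-\pmb\theta_0(x_s))^2 \le \lambda^{-1} t\|\pmb\theta-\pmb\theta_0\|_\infty^2$. Hence there is a constant $C''=\sqrt{D\alpha\lambda}/2$ with
\[
\{\|\pmb\theta-\pmb\theta_0\|_\infty\le C''\epsilon_t\}\subset B(\pmb\theta_0,\epsilon_t).
\]

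\emph{Removing the hyperparameter.} Mixing over $a$ and restricting to $I=[a_L,a_U]$, the lower bound $g(a)\ge A_1a^pe^{-B_1a^d\log^q a}$ gives $\inf_I g \ge g_I>0$. Therefore
\[
\Pi(E)\ge |I|\,g_I\inf_{a\in I}\Pi_a(E).
\]

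\emph{Centering.} For each $a\in I$, Lemma~\ref{lemma:cameron-martin-formula} together with $\sup_{a\in I}\|\pmb\theta_0\|_{\mathcal H_a}^2\le R^2$ gives
\[
\Pi_a(\|\pmb\theta-\pmb\theta_0\|_\infty\le C''\epsilon_t)\ge e^{-R^2/2}\,\Pi_a(\|W^a\|_\infty\le C''\epsilon_t).
\]
This reduces everything to a centered small-ball estimate that must hold uniformly in $a\in I$.

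\emph{The small-ball estimate (main obstacle).} We need
\[
-\log\Pi_a(\|W^a\|_\infty\le\epsilon)\le C_1\epsilon^{-d/\nu}\quad\text{uniformly for } a\in[a_L,a_U].
\]
Unlike the Mat\'ern case, this is not available off the shelf. Our approach is spectral.

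\begin{itemize}
\item The RQ kernel is a scale mixture of squared exponentials: $k_{\mathrm{RQ},a}(x,x')=\int_0^\infty e^{-\tau a^2\|x-x'\|^2/(2\nu)}\,\frac{\tau^{\nu-1}e^{-\tau}}{\Gamma(\nu)}\,d\tau$.
\item Its spectral density is a mixture of Gaussians. Because the Gamma mixing density is polynomial near $\tau=0$, it yields a heavy (polynomial) spectral tail with exponent governed by $\nu$.
\item One then bounds the metric entropy of the unit RKHS ball in sup-norm by $\log N(\epsilon,\mathbb H_1,\|\cdot\|_\infty)\lesssim\epsilon^{-d/\nu}$. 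This uses the comparison $\mathbb H_1\subset$ Sobolev/H\"older ball of smoothness $\nu$, obtained from the spectral lower bound $\hat k(\omega)\gtrsim(1+\|\omega\|^2)^{-\nu-d/2}$ (as for Mat\'ern-$\nu$), or alternatively from a direct eigenvalue decay $\lambda_m\lesssim m^{-(2\nu+d)/d}$ in the spirit of Lemma~\ref{lem:rq-gamma}.
\item The Kuelbs--Li / Li--Linde correspondence converts this entropy bound into the small-ball exponent $\epsilon^{-d/\nu}$.
\item Uniformity over $a\in I$ follows because rescaling $x\mapsto ax$ changes constants only by factors of $a_U^d$.
\end{itemize}

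We expect establishing this spectral comparison rigorously, including uniformity over $a$, to be the hardest step. The first thing to try is a dominance argument: bound the RQ spectral density above by a constant multiple of the Mat\'ern-$\nu$ spectral density at a comparable scale. Then invoke Anderson's inequality, or a covariance comparison, to transfer the Mat\'ern small-ball bound of \cite{vdv-vz-2011} used in Lemma~\ref{lemma:matern-contraction-rate}.

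\emph{Solving for the rate.} Combining the three steps yields
\[
-\log\Pi(B(\pmb\theta_0,\epsilon_t))\le C_2+C_3\epsilon_t^{-d/\nu}.
\]
Balancing $\epsilon_t^{-d/\nu}\asymp t\epsilon_t^2$ gives $\epsilon_t\asymp t^{-\nu/(2\nu+d)}$, and Assumption~\ref{ass:prior} then holds for all large $t$ with a suitable $D$. No logarithmic factor arises, since the hyperprior is used only on a fixed compact interval.
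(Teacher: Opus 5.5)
The outer skeleton of your proof matches the paper's: sup-norm inclusion into the $2$-R\'enyi ball, restricting the hyperprior to $[a_L,a_U]$, Cameron--Martin centering, a uniform centered small-ball bound, and balancing $\epsilon^{-d/\nu}\asymp t\epsilon^2$. The routes differ only in how you obtain $-\log\Pi_a(\|W^a\|_\infty\le\epsilon)\lesssim\epsilon^{-d/\nu}$. There, the spectral reasoning in your bullet list is wrong and cannot carry a proof.

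\begin{itemize}
\item \emph{The RQ spectral tail is exponential, not polynomial.} The RQ kernel is real-analytic, and its spectral density has Bessel (Mat\'ern) shape, $\hat k_a(\omega)\propto a^{-d}(\|\omega\|/a)^{\nu-d/2}K_{\nu-d/2}(\sqrt{2\nu}\|\omega\|/a)$, which decays \emph{exponentially}. For instance, with $d=\nu=1$, $(1+x^2)^{-1}$ has Fourier transform $\pi e^{-|\omega|}$.
\item \emph{The Gamma mixing near $\tau=0$ controls the spatial tail.} Its polynomial behaviour there produces $k(r)\sim r^{-2\nu}$. High frequencies come from large $\tau$, where the mixing density is $e^{-\tau}$.
\item \emph{The spectral bound you invoke is false and points the wrong way.} $\hat k_a(\omega)\gtrsim(1+\|\omega\|^2)^{-\nu-d/2}$ does not hold. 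Even if it did, since $\|h\|_{\mathcal{H}_a}^2\asymp\int|\hat h|^2/\hat k_a$, placing the RKHS ball inside a Sobolev ball requires an \emph{upper} bound on $\hat k_a$.
\item \emph{The entropy exponent is mislabeled.} The sup-norm entropy of an $H^{\nu+d/2}$-ball is $\epsilon^{-2d/(2\nu+d)}$, which Kuelbs--Li/Li--Linde converts into the small-ball exponent $\epsilon^{-d/\nu}$. An entropy bound of $\epsilon^{-d/\nu}$ would only give $\epsilon^{-2d/(2\nu-d)}$.
\end{itemize}

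The paper's proof has the same problems. Its \eqref{eq:rq-contraction-2} is the Mat\'ern density, only the upper half of \eqref{eq:rq-contraction-3} is true, and \eqref{eq:rq-contraction-17} is not the correct form of the entropy/small-ball link.

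Your ``first thing to try'', by contrast, is correct, and it is a genuinely different and cleaner route than the paper's. Because $\nu>d/2$, $\hat k_a$ is bounded at the origin, with value $\propto a^{-d}\Gamma(\nu-d/2)$. It also decays like $e^{-\sqrt{2\nu}\|\omega\|/a_U}$ up to polynomial factors. Hence $\hat k_a\le C\,\hat k^{(\nu)}$ for one fixed Mat\'ern-$\nu$ spectral density, uniformly over $a\in[a_L,a_U]$. It follows that $Ck^{(\nu)}-k_{\mathrm{RQ},a}$ is positive definite, so $\sqrt{C}\,W^{(\nu)}$ has the law of $W^a+V$ with $V$ an independent centered GP. Anderson's inequality, applied on finite grids and passed to the limit by path continuity, then gives $\Pi_a(\|W^a\|_\infty\le\epsilon)\ge\mathbb{P}(\|W^{(\nu)}\|_\infty\le\epsilon/\sqrt{C})$. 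This imports the Mat\'ern bound used in Lemma~\ref{lemma:matern-contraction-rate}.

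The paper instead uses Fourier truncation on a periodized domain, a volumetric covering bound, and an entropy-to-small-ball step. Your comparison needs only the one-sided spectral inequality that is actually true, and no entropy bookkeeping. The paper's route, once repaired, is self-contained but more delicate.

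Finally, the true RQ small-ball exponent is only polylogarithmic in $1/\epsilon$. So $t^{-\nu/(2\nu+d)}$ is a valid but far-from-sharp contraction rate, and the ``$\asymp$'' should be read as an upper bound.
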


\begin{proof}
The inverse scale-length parameterization $(a:=\ell^{-1})$ of the rational quadratic kernel is:
\begin{align*}
k_{\mathrm{RQ}, a}(x, x') = \left(1 + \frac{a^{2}\lVert x - x'\rVert_{2}^{2}}{2\nu}\right)^{-\nu}.
\end{align*}

\medskip
\noindent\textit{Centered sup-norm small ball for the rational quadratic kernel}. Let $W^{a} \sim \mathrm{GP}(0, k_{\mathrm{RQ}, a}(\cdot, \cdot))$ on $[0, 1]^{d}$, where $\nu > d/2$. Fix any compact interval $[a_L, a_U]$ for constants $0<a_L<a_U<\infty$. Then, there exist constants $C>0$ and $\epsilon_0 \in (0, 1)$ such that for all $a\in [a_L, a_U]$ and all $\epsilon\in (0, \epsilon_0)$:
\begin{align}
\label{eq:rq-contraction-1}
-\log \mathbb{P}\left(\lVert W^{a}\rVert_{\infty} \leq \epsilon\right) \leq C (a\vee 1)^{d}\epsilon^{-\frac{d}{\nu}},
\end{align}
where $a\vee 1:=\max\{a, 1\}$.

We first establish~\eqref{eq:rq-contraction-1}. Since, $k_{\mathrm{RQ}, a}(\cdot,\cdot)$ is stationary, by Bochner's theorem it admits a spectral density $f_a(\omega)$ such that:
\begin{align*}
k_{\mathrm{RQ}, a}(h) = \int_{\mathbb{R}^{d}}e^{i\omega\cdot h}f_a(\omega)d\omega,\quad h\in \mathbb{R}^{d}.
\end{align*}
For the rational quadratic kernel, the spectral density has the closed form~\citep{Rasmussen-Williams-2005}:
\begin{align}
\label{eq:rq-contraction-2}
f_{a}(\omega) = C_{d, \nu}a^{-d}\left(1 + \frac{\lVert \omega \rVert_{2}^{2}}{2\nu a^{2}}\right)^{-(\nu + \frac{d}{2})},
\end{align}
for a constant $C_{d, \nu} > 0$. In particular, there exist constants $0<c_1 \leq c_2<\infty$ such that for all $a\in [a_L, a_U]$ and all $\omega \in \mathbb{R}^{d}$:
\begin{align}
\label{eq:rq-contraction-3}
c_1a^{-d}\left(a^{2} + \lVert \omega \rVert_{2}^{2}\right)^{-(\nu + \frac{d}{2})} \leq f_a(\omega) \leq c_2a^{-d}\left(a^{2} + \lVert \omega \rVert_{2}^{2}\right)^{-(\nu + \frac{d}{2})}.
\end{align}
Now, let $\mathcal{H}_a$ be the RKHS of $\mathrm{GP}(0, k_{\mathrm{RQ}, a}(\cdot, \cdot))$ defined on $[0, 1]^{d}$. For stationary GPs, the RKHS norm is characterized by the spectral density~\citep{vdv-vz-2008}, i.e., up to a constant:
\begin{align}
\label{eq:rq-contraction-4}
\lVert h\rVert_{\mathcal{H}_a}^{2} \asymp \int_{\mathbb{R}^{d}}\frac{|\widehat{h}(\omega)|^2}{f_a(\omega)}d\omega.
\end{align}
Combining~\eqref{eq:rq-contraction-3} and~\eqref{eq:rq-contraction-4}, on $a\in [a_L, a_U]$:
\begin{align}
\label{eq:rq-contraction-5}
\lVert h \rVert_{\mathcal{H}_a}^{2} \asymp a^{d}\int_{\mathbb{R}^{d}}\left(a^2 + \lVert \omega\rVert_2^{2}\right)^{\nu + \frac{d}{2}}|\widehat{h}(\omega)|^{2}d\omega.
\end{align}

Consider, $\mathcal{H}_{a, 1}:= \{h\in \mathcal{H}_a\;:\; \lVert h\rVert_{\mathcal{H}_a} \leq 1\}$. We will show that there exist constants $C'>0, \eta_0>0$ such that for all $a\in [a_L, a_U]$ and all $\eta\in (0, \eta_0)$:
\begin{align}
\label{eq:rq-contraction-6}
\log N(\eta, \mathcal{H}_{a, 1}, \lVert \cdot \rVert_{\infty}) \leq C'(a\vee 1)^{d}\eta^{-\frac{d}{\nu}},
\end{align}
where $N(\eta, \mathcal{H}_{a, 1}, \lVert \cdot \rVert_{\infty})$ denotes the minimum number of balls of radius $\eta>0$ needed to cover the unit ball $\mathcal{H}_{a, 1}$ of the RKHS. To avoid boundary artifacts, embed $[0, 1]^{d}$ into the torus $\mathbb{T}^{d}$ via periodic extension. Let $\{e_k(x) = e^{2\pi i k\cdot x}\}_{k\in \mathbb{Z}^{d}}$ be the Fourier basis on $\mathbb{T}^{d}$, and write $h(x) = \sum_{k\in \mathbb Z^{d}}\widehat{h}_k e_k(x)$. From~\eqref{eq:rq-contraction-5} and $\lVert h\rVert_{\mathcal{H}_a} \leq 1$, we obtain:
\begin{align}
\label{eq:rq-contraction-7}
\sum_{k\in \mathbb Z^{d}}a^{d}\left(a^{2} + \lVert k\rVert_{2}^{2}\right)^{\nu + \frac{d}{2}}|\widehat{h}_k|^{2} \leq C_0,
\end{align}
where $C_0$ is a constant depending on $(d, \nu, a_L, a_U)$. Let $P_m h:= \sum_{\lVert k \rVert_{\infty}\leq m a}\widehat{h}_k e_k$. Then:
\begin{align}
\label{eq:rq-contraction-8}
\lVert h - P_m h\rVert_{\infty} \leq \sum_{\lVert k \rVert_{\infty} > ma} |\widehat{h}_k|.
\end{align}
Apply Cauchy-Schwarz inequality with weights, $w_k := a^{d}(a^{2} + \lVert k \rVert_{2}^{2})^{\nu + d/2}$, to obtain:
\begin{align}
\label{eq:rq-contraction-9}
\sum_{\lVert k \rVert_{\infty} > ma}|\widehat{h}_k| \leq \left(\sum_{\lVert k \rVert_{\infty} > ma}w_k|\widehat{h}_k|^2\right)^{\frac{1}{2}} \left(\sum_{\lVert k \rVert_{\infty}>ma} w_k^{-1}\right)^{\frac{1}{2}}.
\end{align}
The first factor in~\eqref{eq:rq-contraction-9} is less than equal to $C_0^{1/2}$ by~\eqref{eq:rq-contraction-7}. For the second factor in~\eqref{eq:rq-contraction-9}:
\begin{align}
\label{eq:rq-contraction-10}
\sum_{\lVert k \rVert_{\infty} > ma}w_k^{-1} = \sum_{\lVert k \rVert_{\infty} > ma}a^{-d}\left(a^{2} + \lVert k \rVert_{2}^{2}\right)^{-(\nu + \frac{d}{2})} \leq a^{-d} \sum_{\lVert k\rVert_{\infty}> ma} \lVert k \rVert_{2}^{-2\nu - d},
\end{align}
as $\lVert k \rVert_{2} \gtrsim a$ and $a^{2} + \lVert k \rVert_{2}^{2} \asymp \lVert k  \rVert_2^2$. Approximating the sum by an integral and using Euclidean norm $\lVert \cdot \rVert_2$, we get:
\begin{align}
\label{eq:rq-contraction-11}
\sum_{\lVert k \rVert_{\infty} > ma}\lVert k \rVert_{2}^{-2\nu-d} \lesssim \int_{\lVert u \rVert_{2} > ma}\lVert u \rVert_2^{-2\nu - d}du \asymp (ma)^{-2\nu}.
\end{align}
Plugging~\eqref{eq:rq-contraction-11} into~\eqref{eq:rq-contraction-10} gives:
\begin{align}
\label{eq:rq-contraction-12}
\sum_{\lVert k \rVert_{\infty} > ma}w_k^{-1}\lesssim a^{-d}(ma)^{-2\nu}.
\end{align}
Then, from~\eqref{eq:rq-contraction-9}:
\begin{align}
\label{eq:rq-contraction-13}
\lVert h - P_m h\rVert_{\infty} \lesssim C_0^{\frac{1}{2}}\left(a^{-d}(ma)^{-2\nu}\right)^{\frac{1}{2}} \asymp m^{-\nu}a^{-(\nu + \frac{d}{2})}.
\end{align}
Since, $a\in [a_L, a_U]$, $a^{-(\nu+d/2)}$ is uniformly bounded by a constant depending only on the slab, $[a_L, a_U]$. Hence:
\begin{align}
\label{eq:rq-contraction-14}
\sup_{a\in [a_L, a_U]}\sup_{h\in \mathcal{H}_{a, 1}}\lVert h - P_m h\rVert_{\infty} \leq C_2 m^{-\nu},
\end{align}
for some constant $C_2$ depending on $(d, \nu, a_L, a_U)$. Therefore, choosing $m\asymp \eta^{-1/\nu}$ guarantees $\lVert h - P_mh\rVert_{\infty} \leq \eta/2$. The truncated set $P_m(\mathcal{H}_{a, 1})$ lives in a linear space of dimension, $D_m:= \# \{k\in \mathbb{Z}^{d}\;:\; \lVert k \rVert_{\infty} \leq ma\} \asymp (ma)^{d}$. A standard volumetric argument gives that the $\lVert \cdot \rVert_{\infty}$-covering number of a bounded subset of a $D_m$-dimensional normed space satisfies:
\begin{align}
\label{eq:rq-contraction-15}
\log N\left(\frac{\eta}{2}, P_m(\mathcal{H}_{a, 1}), \lVert \cdot \rVert_{\infty}\right) \lesssim D_m \log\left(\frac{1}{\eta}\right).
\end{align}
Combining $D_m\asymp (ma)^{d}$ and $m\asymp \eta^{-1/\nu}$, we get:
\begin{align}
\label{eq:rq-contraction-16}
\log N\left(\frac{\eta}{2}, P_m(\mathcal{H}_{a, 1}), \lVert \cdot \rVert_{\infty}\right) \lesssim (ma)^{d}\log\left(\frac{1}{\eta}\right) \asymp a^{d}\eta^{-\frac{d}{\nu}}\log\left(\frac{1}{\eta}\right).
\end{align}
Thus, for small $\eta$, we establish~\eqref{eq:rq-contraction-6}:
\begin{align*}
    \log N(\eta, \mathcal{H}_{a, 1}, \lVert \cdot \rVert_{\infty}) \leq C'(a \vee 1)^{d} \eta^{-\frac{d}{\nu}}.
\end{align*}

Let $\phi_a(\epsilon):= -\log \mathbb{P}(\lVert W^{a}\rVert_{\infty} \leq \epsilon)$. A fundamental result in the small deviation literature states that for Gaussian measures on Banach spaces, small-ball probabilities are controlled by metric entropy of the RKHS unit ball~\citep{vanZantenVanDerVaart2008IMS}. In particular, there exist constants $c_3, c_4 > 0$ such that for sufficiently small $\epsilon$:
\begin{align}
\label{eq:rq-contraction-17}
\phi_a(\epsilon) \leq c_3 \log N(c_4 \epsilon, \mathcal{H}_{a, 1}, \lVert \cdot \rVert_{\infty}).
\end{align}
Using~\eqref{eq:rq-contraction-17} and~\eqref{eq:rq-contraction-6}, for small $\epsilon$, we have~\eqref{eq:rq-contraction-1}:
\begin{align*}
\phi_a(\epsilon) = -\log \mathbb{P}(\lVert W^{a}\rVert_{\infty} \leq \epsilon) \leq c_3 C'(a\vee 1)^{d}(c_4\epsilon)^{-\frac{d}{\nu}} = C(a\vee 1)^{d}\epsilon^{-\frac{d}{\nu}}.
\end{align*}

\medskip
\noindent\textit{Rational quadratic kernel contraction rate}. We proceed in a similar fashion as in the proof of Lemma~\ref{lemma:matern-contraction-rate}. Invoking the local quadratic R\'{e}nyi-Bernstein condition yields:
\begin{align}
\label{eq:rq-contraction-rate-1}
\Pi\left(B(\pmb\theta_0, \epsilon)\right) \geq \Pi\left(\lVert \pmb\theta - \pmb\theta_0\rVert_{\infty} \leq r\right),
\end{align}
for $\epsilon >0$, where $B(\pmb\theta_0, \epsilon) := \{\pmb\theta \in \Theta\;:\; D_2^{(t)}(\pmb\theta_0, \pmb\theta) \leq D\alpha t\epsilon^2/4\}$, $\Pi(\cdot)$ denotes the unconditional prior, and $\Pi_a(\cdot)$ denotes the conditional law $\pmb\theta \mid a$. Note that, $r:= \sqrt{D\alpha /4c_L}\epsilon \leq c_0$, for constants $c_0, c_L>0$.

Let $E_r:= \{\lVert \pmb\theta - \pmb\theta_0\rVert_{\infty}\leq r\}$. With $\Pi(\cdot) = \int \Pi_a(\cdot)g(a) da$:
\begin{align}
\label{eq:rq-contraction-rate-2}
\Pi(E_r) = \int_{0}^{\infty}\Pi_a(E_r) g(a)da \geq \int_{a_L}^{a_U}\Pi_a(E_r)g(a)da \geq (a_U-a_L)\underline{g}\inf_{a\in [a_L, a_U]}\Pi_a(E_r),
\end{align}
where $\underline{g} > 0$ such that $\inf_{a\in [a_L, a_U]}g(a) \geq \underline{g}$. From~\eqref{eq:rq-contraction-rate-2}, it suffices to lower bound $\inf_{a\in [a_L, a_U]}\Pi_a(E_r)$.

Fix any $a\in [a_L, a_U]$. Under $\Pi_a(\cdot)$, $\pmb\theta = W^{a}$ with $W^{a} \sim \mathrm{GP}(0, k_{\mathrm{RQ}, a}(\cdot, \cdot))$. Thus, $\Pi_a(E_r) = \Pi_a(\lVert W^{a} - \pmb\theta_0\rVert_{\infty} \leq r)$. As $\pmb\theta_0(\cdot) \in \mathcal{H}_a$ (prior RKHS), Lemma~\ref{lemma:cameron-martin-formula} yields:
\begin{align}
\label{eq:rq-contraction-rate}
\Pi_a\left(\lVert W^{a} - \pmb\theta_0\rVert_{\infty} \leq r\right) \geq e^{-\frac{1}{2}\lVert \pmb\theta_0\rVert^{2}_{\mathcal{H}_a}}\Pi_a\left(\lVert W^{a}\rVert_{\infty} \leq r\right).
\end{align}
Taking infimum over $a\in [a_L, a_U]$ and using $\sup_{a\in [a_L, a_U]}\lVert \pmb\theta_0\rVert^{2}_{\mathcal{H}_a} \leq R^{2} < \infty$:
\begin{align}
\label{eq:rq-contraction-rate-4}
\inf_{a\in [a_L, a_U]}\Pi_{a}(E_r) \geq e^{-\frac{R^2}{2}}\Pi_{a}\left(\lVert W^{a}\rVert_{\infty} \leq r\right).
\end{align}
Using~\eqref{eq:rq-contraction-1}, there exist constants $C>0, r_0>0$ such that for all $a\in [a_L,a_U]$ and all $0<r\leq r_0$:
\begin{align}
\label{eq:rq-contraction-rate-5}
-\log \Pi_a\left(\lVert W^a\rVert_{\infty} \leq r\right) \geq C(a\vee 1)^{d}r^{-\frac{d}{\nu}}.
\end{align}
For all $a\in [a_L, a_U]$,~\eqref{eq:rq-contraction-rate-5} implies:
\begin{align}
\label{eq:rq-contraction-rate-6}
\inf_{a\in [a_L, a_U]}\Pi_a\left(\lVert W^{a}\rVert_{\infty} \leq r\right) \geq e^{-C(a_U\vee 1)^d r^{-\frac{d}{\nu}}}.
\end{align}
Combine~\eqref{eq:rq-contraction-rate-2},~\eqref{eq:rq-contraction-rate-4}, and~\eqref{eq:rq-contraction-rate-6} to get:
\begin{align}
\label{eq:rq-contraction-rate-7}
\Pi(E_r) \geq (a_U-a_L) \underline{g}e^{-\frac{R^2}{2} - C(a\vee 1)^{d}r^{-\frac{d}{\nu}}}.
\end{align}

Recall $r = C_{\star}\epsilon$ with $C_{\star} := \sqrt{D\alpha/4c_L}$. By~\eqref{eq:rq-contraction-rate-1} and~\eqref{eq:rq-contraction-rate-7}:
\begin{align*}
\Pi\left(B(\pmb\theta_0, \epsilon)\right) \geq (a_U-a_L) \underline{g} e^{-\frac{1}{2}R^{2} - C(a\vee 1)^{d} (C_{\star}\epsilon)^{-\frac{d}{\nu}}}.
\end{align*}
Equivalently:
\begin{align}
\label{eq:rq-contraction-rate-8}
-\log \Pi\left(B(\pmb\theta_0, \epsilon)\right) \leq \frac{R^2}{2} - \log((a_U-a_L)\underline{g}) + C(a\vee 1)^{d}C_{\star}^{-\frac{d}{\nu}}\epsilon^{-\frac{d}{\nu}}.
\end{align}
Therefore, from~\eqref{eq:rq-contraction-rate-8}, pick $\epsilon = \epsilon_t$ to make the dominant term, $C(a\vee 1)^{d}C_{\star}^{-d/\nu}\epsilon^{-d/\nu}$, comparable to $t\epsilon_t^2$, i.e., $t\epsilon_t^2 \asymp \epsilon_t^{-d/\nu} \iff t^{-\nu / (2\nu+d)}$. Choose $K>0$ large enough so that, $t\epsilon_t^2 \geq 2(R^2/2 - \log((a_U-a_L)\underline{g}))$ and $C(a\vee 1)^{d}C_{\star}^{-d/\nu}\epsilon^{-d/\nu} \leq D\alpha t\epsilon_t^{2}/8$ for all large $t$. Then,~\eqref{eq:rq-contraction-rate-8} yields the prior mass condition, $-\log \Pi(B(\pmb\theta_0, \epsilon_t)) \lesssim t\epsilon_t^{2}$, with the $\alpha$-posterior contraction rate (following~\cite{bhattacharya2019bayesian}[Corollary 3.5]):
\begin{align}
\label{eq:rq-contraction-rate-9}
\epsilon_t \asymp t^{-\frac{\nu}{2\nu+d}}.
\end{align}
\end{proof}

\section{Bound on information gain ($\gamma_T$) for Rational Quadratic Kernel}
Denote by $T_k:L^2(\mathcal X)\to L^2(\mathcal X)$ the associated kernel
integral operator:
\[
(T_k f)(x) = \int_{\mathcal X} k(x,x') f(x') \, dx'.
\]
Let $\{\lambda_j\}_{j\ge1}$ denote the eigenvalues of $T_k$ in non-increasing order.

\begin{lemma}
\label{lem:rq-gamma}
Let $k_{\mathrm{RQ}}(\cdot, \cdot)$ be the rational quadratic kernel:
\[
k_{\mathrm{RQ}}(x,x')
=
\left(1 + \frac{\|x-x'\|^2_2}{2\nu\ell^2}\right)^{-\nu},
\quad \ell>0, \nu>0.
\]
Then the maximum information gain satisfies:
\[
\gamma_T(k_{\mathrm{RQ}})
=
\mathcal{\tilde O}\!\left(T^{\frac{d}{2\nu+d}}\right).
\]
\end{lemma}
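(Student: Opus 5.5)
We will use the general eigenvalue-decay route of \cite{vakili21a}. That framework says that if the Mercer eigenvalues of $T_k$ on $\mathcal X=[0,1]^d$ decay polynomially, $\lambda_j \lesssim j^{-\beta_p}$ with $\beta_p>1$, and the eigenfunctions are uniformly bounded, then
\[
\gamma_T=\mathcal O\big(T^{\frac{1}{\beta_p}}\log^{1-\frac{1}{\beta_p}} T\big).
\]
This bound comes from the projection argument: split the feature space into the first $D$ eigendirections, whose contribution is at most $D\log(1+T/(\lambda D))$, and the tail, whose contribution is at most $\lambda^{-1}T\sum_{j>D}\lambda_j$. One then optimizes over $D$. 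So the plan is to show $\beta_p=\frac{2\nu+d}{d}$, which gives $\gamma_T=\tilde{\mathcal O}(T^{\frac{d}{2\nu+d}})$.

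\textbf{Step 1: spectral density.} Bound the spectral density as in \eqref{eq:rq-contraction-2}--\eqref{eq:rq-contraction-3}, with $a=\ell^{-1}$ fixed:
\[
f(\omega)\asymp (1+\|\omega\|_2^2)^{-(\nu+d/2)}.
\]
This is exactly the decay order of the Mat\'ern-$\nu$ spectral density.

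\textbf{Step 2: eigenvalue decay.} Transfer Step 1 to the eigenvalues of $T_k$ on the compact domain. The cleanest route avoids computing eigenfunctions. Sandwich the RKHS norm of $k_{\mathrm{RQ}}$, restricted to $[0,1]^d$, between constant multiples of the Sobolev norm $H^{\nu+d/2}$; this uses \eqref{eq:rq-contraction-4}--\eqref{eq:rq-contraction-5}. Then apply the min--max principle. Two kernels whose RKHSs coincide with equivalent norms have integral-operator eigenvalues comparable up to constants. This gives $\lambda_j\asymp\lambda_j^{\mathrm{Mat}}\asymp j^{-(2\nu+d)/d}$, using the known Mat\'ern decay on bounded Lipschitz domains. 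As an alternative that avoids Sobolev machinery, use the periodized Fourier truncation from the proof of Lemma~\ref{lemma:rq-contraction-rate}. The Kolmogorov widths of the RKHS unit ball in $L^2$ are controlled by $P_m$ with $D_m\asymp m^d$ and error $m^{-\nu-d/2}$ in $L^2$. Since $\lambda_{j+1}$ equals the squared $j$-width, this gives $\lambda_j\lesssim j^{-(2\nu+d)/d}$ directly. Only the upper bound is needed.

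\textbf{Step 3: conclude.} \cite{vakili21a} need a uniform bound on the eigenfunctions, or more precisely a bound on $\sup_x\sum_{j>D}\lambda_j\phi_j^2(x)$. I would handle this with the version of their argument that uses only the tail trace $\sup_x k_D^{\perp}(x,x)$ of the projected kernel. This can be bounded by the same width computation in sup-norm, as in \eqref{eq:rq-contraction-13}, which gives a tail of order $D^{-2\nu/d}$. Balance
\[
D\log T \quad\text{against}\quad T D^{-2\nu/d}.
\]
The optimum is $D\asymp (T/\log T)^{d/(2\nu+d)}$, which yields the claim.

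\textbf{Main obstacle.} I expect Step 2/3 to be the hardest part: justifying the spectral-to-eigenvalue transfer on $[0,1]^d$, together with the uniform tail control that replaces bounded eigenfunctions. The sup-norm Fourier truncation is where I would spend the effort. Note that it requires $\nu>d/2$ for summability, as in Lemma~\ref{lemma:rq-contraction-rate}, so the case $\nu\le d/2$ in the statement may need that restriction added.
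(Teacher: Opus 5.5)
Your skeleton is the paper's route: a spectral tail bound, then $\lambda_j\lesssim j^{-(2\nu+d)/d}$, then the polynomial-decay bound of \cite{vakili21a}. It goes through, but only in the one-sided form you fall back on at the end of Step~2. Step~1 as stated is false, an error you inherited from the paper.

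The rational quadratic kernel is a Gamma scale mixture of squared-exponential kernels, $k_{\mathrm{RQ}}(r)=\int_0^\infty e^{-\tau r^2/(2\ell^2)}\,\frac{\nu^\nu}{\Gamma(\nu)}\tau^{\nu-1}e^{-\nu\tau}\,d\tau$. Its spectral density is therefore proportional to $\|\omega\|_2^{\nu-d/2}K_{\nu-d/2}(\sqrt{2\nu}\,\ell\|\omega\|_2)$, which decays like $e^{-\sqrt{2\nu}\,\ell\|\omega\|_2}$. (For $d=\nu=1$ this is just the fact that a Cauchy function has a Laplace Fourier transform.) The formula in \eqref{eq:rq-contraction-2} and \eqref{eq:rq-gamma-1} is the Mat\'ern-$\nu$ density, not the rational quadratic one. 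Consequently three of your claims fail: the lower half of your two-sided bound, the norm equivalence of $\mathcal H_{\mathrm{RQ}}$ with $H^{\nu+d/2}$, and $\lambda_j\asymp\lambda_j^{\mathrm{Mat}}$.

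What is true, and all you need, is $f(\omega)\lesssim(1+\|\omega\|_2^2)^{-(\nu+d/2)}$. This is also all the paper uses, via \eqref{eq:rq-gamma-2}. It gives the continuous embedding $\mathcal H_{\mathrm{RQ}}\hookrightarrow H^{\nu+d/2}([0,1]^d)$, and hence $\lambda_j\lesssim j^{-(2\nu+d)/d}$ by your width argument. In fact the eigenvalues decay faster than any polynomial, so $\gamma_T$ is polylogarithmic and the stated $T^{d/(2\nu+d)}$ is valid but far from tight. Your $\nu>d/2$ caveat is also unnecessary: $\sum_{\|k\|_2>m}\|k\|_2^{-2\nu-d}\asymp m^{-2\nu}$ is finite for every $\nu>0$.

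Beyond that, your version is more careful than the paper's. The paper cites \cite{bach2017equivalence} for the eigenvalue transfer and applies \cite{vakili21a} without checking its uniformly-bounded-eigenfunction hypothesis. Your tail-trace replacement works. The Fourier truncation is a linear rank-$D$ map, so its sup-norm error on the RKHS unit ball bounds $\inf_{\dim U\le D}\sup_x k_{U^\perp}(x,x)^{1/2}$. The log-det split of \cite{vakili21a} then goes through with that $U$ in place of the Mercer eigenspace.

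The one step to redo is the periodization. The periodic extension of $h|_{[0,1]^d}$ used in the proof of Lemma~\ref{lemma:rq-contraction-rate} generally jumps across the boundary, so \eqref{eq:rq-contraction-7} does not follow. Instead, take the norm-minimal extension of $h$ in the RKHS on $\mathbb R^d$, multiply it by a smooth cutoff equal to $1$ on $[0,1]^d$, and periodize on a larger box.
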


\begin{proof}
Using Bochner's theorem, the rational quadratic kernel is stationary and admits the following high-frequency spectral density
representation:
\begin{align}
\label{eq:rq-gamma-1}
S(\omega) =\int \left(1 + \frac{\|\tau\|^2_2}{2\nu\ell^2}\right)^{-\nu} e^{-2\pi i \omega^{\top} \tau} d\tau
\propto
\left(1 + \frac{\ell^2\|\omega\|^2_2}{2\nu}\right)^{-(\nu+ d/2)},
\end{align}
which implies the polynomial tail bound:
\begin{align}
\label{eq:rq-gamma-2}
S(\omega) = \mathcal O\!\left(\|\omega\|^{-(2\nu+ d)}_2\right),
\quad \text{as } \|\omega\|_2\to\infty.
\end{align}

For stationary kernels on compact domains, polynomial decay of the spectral
density implies polynomial decay of the eigenvalues of the associated kernel
integral operator (e.g., see standard results in~\cite{bach2017equivalence} relating spectral densities and
Mercer eigenvalues).
In particular, the eigenvalues of $T_{k_{\mathrm{RQ}}}$ satisfy:
\begin{align}
\label{eq:rq-gamma-3}
\lambda_j
=
\mathcal O\!\left(j^{-(1 + 2\nu/d)}\right).
\end{align}

Applying the general information gain bound for kernels with polynomial
eigenvalue decay~\citep{vakili21a}, we obtain:
\begin{align}
\label{eq:rq-gamma-4}
\gamma_T
\;\le\;
C \sum_{j\ge1} \log(1 + T \lambda_j)
=
\mathcal{\tilde O}\!\left(T^{\frac{d}{2\nu+ d}}\right),
\end{align}
which completes the proof.
\end{proof}

\end{document}